\title{A trace map on higher scissors congruence groups}
\author[Bohmann]{Anna Marie Bohmann}
\address{Department of Mathematics, Vanderbilt University, Nashville, TN}
\email{am.bohmann@vanderbilt.edu}
\author[Gerhardt]{Teena Gerhardt}
\address{Department of Mathematics, Michigan State University, East Lansing, MI}
\email{teena@math.msu.edu}
\author[Malkiewich]{Cary Malkiewich}
\address{Department of Mathematics, Binghamton University, Binghamton, NY}
\email{malkiewich@math.binghamton.edu}
\author[Merling]{Mona Merling}
\address{Department of Mathematics, University of Pennsylvania, Philadelphia, PA}
\email{mmerling@math.upenn.edu}
\author[Zakharevich]{Inna Zakharevich}
\address{Department of Mathematics, Cornell University, Ithaca, NY}
\email{zakh@math.cornell.edu}
\definecolor{darkgreen}{rgb}{0,0.30,0} 
\definecolor{darkred}{rgb}{0.75,0,0}
\definecolor{darkblue}{rgb}{0,0,0.6} 
\renewcommand*{\backref}[1]{}
\renewcommand*{\backrefalt}[4]{({%
    \ifcase #1 Not cited.%
          \or On p.~#2%
          \else On pp.~#2%
    \fi%
    })}
\def\makeautorefname#1#2{\expandafter\def\csname#1autorefname\endcsname{#2}}
\newtheorem{thm}{Theorem}[section]
\newtheorem{cor}{Corollary}[section]
\newtheorem{lem}{Lemma}[section]
\newtheorem{prop}{Proposition}[section]
\theoremstyle{definition}
\newtheorem{df}{Definition}[section]
\newtheorem{notn}{Notation}[section]
\theoremstyle{remark}
\newtheorem{ex}{Example}[section]
\newtheorem{rmk}{Remark}[section]
\numberwithin{equation}{section}
\numberwithin{figure}{section}
\let\c@cor=\c@thm
\let\c@prop=\c@thm
\let\c@lem=\c@thm
\let\c@df=\c@thm
\let\c@ex=\c@thm
\let\c@warn=\c@thm
\let\c@rmk=\c@thm
\let\c@notn=\c@thm
\let\c@equation\c@thm
\let\c@figure\c@thm
\let\c@table\c@thm
\newcommand{\Z}{\mathbb{Z}}
\newcommand{\Q}{\mathbb{Q}}
\newcommand{\R}{\mathbb{R}}
\newcommand{\sma}{\wedge}
\newcommand{\C}{\mathcal{C}}
\newcommand{\D}{\mathcal{D}}
\newcommand{\F}{\mathcal{F}}
\newcommand{\E}{\mathcal{E}}
\newcommand{\W}{\mathcal{W}}
\newcommand{\U}{\mathcal{U}}
\newcommand{\Set}{\mathbf{Set}}
\renewcommand{\O}{\mathcal{O}}
\DeclareMathOperator{\ob}{ob}
\DeclareMathOperator{\colim}{colim}
\DeclareMathOperator{\Hom}{Hom}
\DeclareMathOperator{\diag}{\operatorname{diag}}
\newcommand{\id}{\mathrm{id}}
\newcommand{\inv}{{-1}}
\newcommand{\initial}{\varnothing}
\newcommand{\Pol}[2]{\mathcal P^{#1}_{\hspace{.1em} #2}}
\newcommand{\tr}{\textrm{tr}}
\newcommand{\Cat}{\mathbf{Cat}}
\newcommand{\pCat}{{\mathbf{Cat}_{\mathrm{pt}}}}
\newcommand{\Catfam}{\mathbf{CatFam}}
\newcommand{\Spc}{\mathbf{Spc}}
\newcommand{\Sp}{\mathbf{Sp}}
\newcommand{\qqand}{\qquad\hbox{and}\qquad}
\newcommand{\asm}{\mathrm{Asm}}
\newcommand{\sd}{\operatorname{sd}}
\newcommand{\ab}{\mathrm{ab}}
\newcommand{\catcov}{category with covering families}
\newcommand{\catcovs}{categories with covering families}
\newcommand{\Catcovs}{Categories with covering families}
\tikzset{
covsubmap/.style={densely dotted,thick,
      decoration={markings,
                    mark=at position 1 with {\arrow[line width=0.1mm, scale= 1.2]{>}} 
}, postaction={decorate}
}
}
\tikzset{
movemapstyle/.style={>->}
}
\newcommand{\csm}{\begin{tikzpicture}
\draw[covsubmap] (0,0)--(1,0);
\end{tikzpicture}}
\newcommand{\movemap}{\begin{tikzpicture}\draw[>->] (0,0)--(1,0);
\end{tikzpicture}}
\newcommand{\csmrev}{\begin{tikzpicture}
\draw[covsubmap] (1,0)--(0,0);
\end{tikzpicture}}
\newcommand{\movemaprev}{\begin{tikzpicture}\draw[>->] (1,0)--(0,0);
\end{tikzpicture}}
\newcommand{\sbt}{\,\begin{picture}(-1,1)(0.5,-1)\circle*{1.8}\end{picture}\hspace{.05cm}}
\newcommand{\sbtup}{\,\, \begin{picture}(-1,1)(0.5,-2)\circle*{1.8}\end{picture}\hspace{.05cm}}
\definecolor{seagreen}{RGB}{46,139,87}
\definecolor{maroon}{RGB}{128,0,0}
\definecolor{darkviolet}{RGB}{148,0,211}
\newlength{\storeparskip}
\begin{document}

\begin{abstract}
Cut-and-paste $K$-theory has recently emerged as an important variant of higher algebraic $K$-theory. However, many of the powerful tools used to study classical higher algebraic $K$-theory do not yet have analogues in the cut-and-paste setting. In particular, there does not yet exist a sensible notion of the Dennis trace for cut-and-paste $K$-theory.

In this paper we address the particular case of the $K$-theory of polyhedra, also called scissors congruence $K$-theory. We introduce an explicit, computable trace map from the higher scissors congruence groups to group homology, and use this trace to prove the existence of some nonzero classes in the higher scissors congruence groups.

We also show that the $K$-theory of polyhedra is a homotopy orbit spectrum. This fits into Thomason's general framework of $K$-theory commuting with homotopy colimits, but we give a self-contained proof. We then use this result to re-interpret the trace map as a partial inverse to the map that commutes homotopy orbits with algebraic $K$-theory. 
\end{abstract}





\maketitle

\begingroup%
\setcounter{secnumdepth}{1}
\setcounter{tocdepth}{1}
\tableofcontents
\endgroup%

\section{Introduction}

The classical scissors congruence problem asks: given two polyhedra $P$ and $Q$ in Euclidean space $E^n$, can one cut $P$ into a finite number of smaller polyhedra, and reassemble these to form $Q$?
In the Euclidean plane $E^2$, area is the only scissors congruence invariant, meaning that this is possible as long as $P$ and $Q$ have the same area. Hilbert's Third Problem famously asks the analoguous question in  3-dimensional Euclidean space $E^3$: Are any two polyhedra $P$ and $Q$ with the same volume scissors congruent? The answer to Hilbert's problem is negative---one needs two invariants, the volume and the Dehn invariant, to determine if two polyhedra in $E^3$ are scissors congruent \cite{dehn,sydler}.  In fact, Dehn shows that a cube and a regular tetrahedron of the same volume are not scissors congruent.

The generalized Hilbert's Third Problem for any standard geometry $X$ (Euclidean, spherical, or hyperbolic) asks whether volume and generalized Dehn invariants are the only scissors congruence invariants in higher dimensions. In the Euclidean case, for instance, Jessen shows that this is true in dimension 4 \cite{jessen}, but the question in dimensions 5 and above remains open. 

Modern approaches to Hilbert's Third Problem are typically algebraic. The scissors congruence group $\mathcal P(X, G)$ is defined as the free abelian group on the polytopes in $X$, modulo cutting and pasting, and modulo translation of the pieces by elements of a group $G$ of isometries of $X$ \cite{sah_79,Dupont-Book}. The volume and generalized Dehn invariants are then homomorphisms out of the scissors congruence group $\mathcal P(X, G)$.

In \cite{inna-assem}, Zakharevich shows that $\mathcal P(X, G)$ arises as the zeroth homotopy theory group of a $K$-theory spectrum $K(\Pol{X}{G})$, where $\Pol{X}{G}$ is a category  defined using polytopes in $X$ and isometries in $G$, recalled in \autoref{polytopeex}.  This makes the study of scissors congruence into a branch of algebraic $K$-theory, and produces a natural extension, a sequence of higher scissors congruence groups $K_n(\Pol{X}{G})$. These represent the higher additive invariants of scissors congruence, which encode not only whether two polyhedra are scissors congruent, but also how they are congruent.  The higher scissors congruence groups also have surprising connections to the rational $K$-theory of fields, see e.g.~\cite{cz-hilbert,scissors_thom}.

We emphasize that scissors congruence $K$-theory is not the $K$-theory of an exact category or Waldhausen category, and thus is not defined by breaking up exact sequences, as in these classical cases. It is a more recent variant called ``cut-and-paste'' $K$-theory, or combinatorial $K$-theory. Other notable forms of cut-and-paste $K$-theory include the $K$-theory of varieties, which arises in the study of motives in algebraic geometry \cite{nicaise_sebag_2011,rondigs2016grothendieck,inna-lefschetz,campbell-varieties,cwz}, and the $K$-theory of $n$-dimensional manifolds \cite{cut_and_paste_wit}.

Cut-and-paste $K$-theory opens up new possibilities, and also new challenges. While many classical $K$-theoretic tools carry over to this new context (see e.g.~\cite{inna-assem,cz-cgw}), many open questions remain, particularly with regard to incorporating trace methods into cut-and-paste $K$-theory. In the trace method approach to classical algebraic $K$-theory, higher algebraic $K$-theory groups are approximated via trace maps to more computable invariants, such as Hochschild homology ($\mathrm{HH}$), topological Hochschild homology ($\mathrm{THH}$), and topological cyclic homology ($\mathrm{TC}$). In the past 30 years trace methods have been enormously successful in facilitating the calculation of algebraic $K$-theory groups. Thus, one would like to import these techniques to the study of cut-and-paste $K$-theory as well. However, it is not yet clear how to generalize the classical trace maps, such as the Dennis trace map $K(R) \to \mathrm{THH}(R)$, to this setting.

In this paper we construct an explicit, computable trace map from the higher $K$-theory of polytopes to group homology. Our techniques apply more generally to any variant of $K$-theory that comes from an assembler (as defined in \cite{inna-assem}) or, more generally still, from a {\catcov} (\autoref{family_structure} below). We first define the notion of a homotopy orbit {\catcov} $\C_{hG}$ arising from any $G$-{\catcov} $\C$. We prove:
\begin{thm}[Commutation with homotopy orbits]\label{mainthm}
For every group $G$, and every $G$-{\catcov} $\C$, there is an equivalence of spectra
\[  K(\C)_{hG} \xrightarrow{\ \asm\ }  K(\C_{hG}). \]
\end{thm}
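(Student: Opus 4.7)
The plan is to model $\C_{hG}$ as a concrete homotopy colimit of copies of $\C$ indexed by the translation groupoid of $G$, and then to commute the $K$-theory construction past this colimit. Specifically, I would identify $\C_{hG}$ with the geometric realization of the two-sided bar construction $B_\bullet(*, G, \C)$ in the category of {\catcovs}, whose $n$-th level is the coproduct $\bigsqcup_{G^{\times n}} \C$ with face maps given by multiplication in $G$ and the $G$-action on $\C$. The assembly map $\asm$ of the theorem is then the natural comparison between $K$ of this bar construction and the analogous bar construction in spectra, which models $K(\C)_{hG}$.

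Granting this identification, the proof becomes the chain of equivalences
\[ K(\C_{hG}) \simeq K\bigl(|B_\bullet(*, G, \C)|\bigr) \simeq \bigl|K(B_\bullet(*, G, \C))\bigr| \simeq \bigl|B_\bullet(*, G, K(\C))\bigr| \simeq K(\C)_{hG}. \]
The third equivalence is an additivity-type statement, that $K$-theory of a coproduct of {\catcovs} is the wedge of the individual $K$-theories. This should be fairly direct from the explicit model of $K$-theory developed following \cite{inna-assem}, since a disjoint union of covering families decomposes $K$-theory summand by summand.

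The second equivalence, that $K$-theory commutes with the geometric realization of the relevant simplicial {\catcov}, is the technical heart of the argument. My approach is to filter by simplicial skeleta and argue inductively, expressing the passage from the $(n-1)$-skeleton to the $n$-skeleton as a pushout of {\catcovs} that attaches the non-degenerate $n$-simplices along their boundary. If $K$ sends these skeletal pushouts to homotopy pushouts of spectra, the induction closes using the additivity statement above, since each non-degenerate slab is itself a coproduct of copies of $\C$.

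The main obstacle is exactly this last point: verifying that $K$-theory of {\catcovs} sends the pushouts arising in the skeletal filtration to homotopy pushouts of spectra. This is a Waldhausen-style additivity statement in the {\catcov} setting, and I expect it requires unpacking the explicit $K$-theory construction from \cite{inna-assem} and checking compatibility with these pushouts directly. Once this is established, it remains only to confirm that the composite equivalence above agrees with the assembly map $\asm$ of the statement, which should be a direct inspection of the identifications made in each step.
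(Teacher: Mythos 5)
Your high-level intuition is sound---this is indeed a ``Thomason's theorem'' statement, commuting $K$-theory with a homotopy colimit---and the paper explicitly frames it that way. The genuine gap is the very first link in your chain: the identification $\C_{hG} \cong |B_\bullet(*, G, \C)|$. The Grothendieck construction $G \int \C$ is a \emph{lax} colimit of the diagram $G \to \Catfam$, not the geometric realization of the bar construction taken in the category of {\catcovs}, which would be an ordinary (strict) colimit. The two already disagree badly for $\C = *$ with trivial $G$-action: the strict realization of $B_\bullet(*, G, *)$ collapses to a point, whereas $*_{hG}$ is the one-object category with morphism group $G$. If you instead interpret $|B_\bullet(*,G,\C)|$ homotopically, then $K\bigl(|B_\bullet(*,G,\C)|\bigr)$ is not yet defined without already knowing how $K$-theory of {\catcovs} interacts with homotopy colimits---which is precisely the theorem.

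The ``main obstacle'' you flag is also, on its own terms, unlikely to close. $K$-theory of {\catcovs}, like algebraic $K$-theory generally, is not expected to send pushouts of inputs to homotopy pushouts of spectra; this is an excision-type claim, and nothing in the construction of $K(\C)$ from $\W(\C)$ provides it. The paper's proof avoids any such statement. It instead builds a double category $\W^\square(\C_{hG})$ (\autoref{double_cat}) that separates horizontal ``moves'' (pure $G$-translations) from vertical ``covering sub-maps'' (pure covers from $\C$), and exploits the unique factorization of every morphism in $\W(\C_{hG})$ into a move followed by a covering sub-map (\autoref{lem:factor}). Waldhausen's swallowing lemma then shows the double nerve of $\W^\square$ models $N_\sbt \W(\C_{hG})$ (\autoref{equivalence_three}), the ``uniform'' sub-double category $\U^\square$ is literally isomorphic to the bar construction $N_\sbt(S^0, G_+, N_\sbt\W(\C))$ (\autoref{equivalence_one}), and the passage from $\U^\square$ to $\W^\square$ is a stable equivalence of associated spectra via the specialness of the $\Gamma$-space (\autoref{equivalence_two}, using \autoref{weak_additivity} and \autoref{wedge_to_weak_product}). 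So your intuition is correct, but making it precise requires constructing a bisimplicial intermediate rather than realizing a simplicial {\catcov} and inducting over its skeleta.
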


This is a variant of a theorem of Thomason \cite{thomason1, thomason2}, stating that $K$-theory commutes with homotopy orbits in the setting of permutative categories. The above result corresponds to a variant of Thomason's theorem that applies to the $K$-theory of general multicategories, as defined in \cite{EM_multi, JY, Nikolaus14}, rather than to those that come from permutative categories. However, in lieu of deducing the result directly from Thomason's, we give a new self-contained proof that applies directly to {\catcovs}.

The significance of this result to polytopes is as follows. The category $\Pol{X}{G}$ used to define scissors congruence $K$-theory is the homotopy orbits of the category $\Pol{X}{1}$, in which the chosen group of isometries is the trivial group $G = 1$.

\begin{cor} \label{polyhedrathrm}
	Scissors congruence $K$-theory is a homotopy orbit spectrum,
\begin{equation*} K(\Pol{X}{1})_{hG} \overset\sim\longrightarrow K(\Pol{X}{G}). \end{equation*} 
\end{cor}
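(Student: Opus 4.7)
The plan is to derive this corollary as a direct application of \autoref{mainthm} to the specific {\catcov} $\C = \Pol{X}{1}$, carrying the natural action of the group $G$ of isometries of $X$. Since every isometry $g \in G$ sends polytopes in $X$ to polytopes and preserves scissors decompositions, $G$ acts on $\Pol{X}{1}$ by automorphisms of {\catcov}. Thus $\Pol{X}{1}$ is genuinely a $G$-{\catcov} in the sense required by \autoref{mainthm}, and the theorem immediately yields an equivalence
\[ K(\Pol{X}{1})_{hG} \xrightarrow{\ \asm\ } K\bigl((\Pol{X}{1})_{hG}\bigr). \]

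The remaining, and essential, step is to identify the homotopy orbit {\catcov} $(\Pol{X}{1})_{hG}$ with $\Pol{X}{G}$. Morally, the morphisms of $\Pol{X}{G}$ are built from two kinds of moves: subpolytope inclusions (cutting and pasting) and the action of isometries in $G$. The first kind of move is already present in $\Pol{X}{1}$, while the second is exactly what a homotopy orbit construction formally adjoins, by taking a bar-type construction that replaces each object $P$ with the collection $\{gP\}_{g \in G}$ together with coherent identifications. I would carry out the comparison by writing down an explicit functor of {\catcov} from $(\Pol{X}{1})_{hG}$ to $\Pol{X}{G}$ that sends an object $(g, P)$ to the polytope $gP$ and records the $G$-translation in a morphism, and then verify that this functor is an equivalence by checking it is essentially surjective (obvious, since every polytope is in the image already at the level of $\Pol{X}{1}$) and induces bijections on the covering families (since every covering family in $\Pol{X}{G}$ refines to one where each piece is moved by a single isometry, which is exactly the form of a covering family in the homotopy orbit construction).

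The main obstacle is this identification of $(\Pol{X}{1})_{hG}$ with $\Pol{X}{G}$ at the level of {\catcov}: it requires that the formal, bar-theoretic definition of homotopy orbits for {\catcov} agrees, up to an equivalence of $K$-theories, with the concrete geometric enlargement of morphisms from piecewise identities to piecewise isometries used to define $\Pol{X}{G}$ in \cite{inna-assem}. Once this comparison is in hand, the corollary follows by composing the equivalence $K((\Pol{X}{1})_{hG}) \simeq K(\Pol{X}{G})$ with the assembly equivalence from \autoref{mainthm}.
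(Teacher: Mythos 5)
Your proposal takes the same route as the paper: apply \autoref{mainthm} to the $G$-{\catcov} $\C = \Pol{X}{1}$ (as in \autoref{no_moving}) and then identify $(\Pol{X}{1})_{hG}$ with $\Pol{X}{G}$, which is \autoref{polytopeex2}. One correction worth making: in the paper's \autoref{htpyorbitass}, $\C_{hG}$ has \emph{the same objects as $\C$} (not pairs $(g,P)$); a morphism $P \to Q$ in $(\Pol{X}{1})_{hG}$ is a pair $(f,g)$ with $g \in G$ and $f\colon gP \subseteq Q$ an inclusion, which is verbatim the definition of a morphism in $\Pol{X}{G}$, and the covering family structures likewise match on the nose. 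So the identification that you flag as ``the main obstacle'' and propose to check by a functor-plus-essential-surjectivity argument is in fact a strict isomorphism of {\catcovs} that falls out of unwinding the two definitions; no comparison up to equivalence of $K$-theories is needed, and once this is observed the corollary is immediate from \autoref{mainthm}.
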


This is a surprising conclusion, with significant computational consequences. In  \cite{scissors_thom} the third author uses this result to describe the homotopy type of $K(\Pol{X}{G})$ as a Thom spectrum, giving computations of the higher $K$-groups for each of the one-dimensional geometries.

Returning to the case that $\C$ is any $G$-{\catcov}, let $A$ be any $\Z[G]$-module, and $\mu\colon K_0(\C) \to A$ any $G$-equivariant measure on $\C$ with values in $A$ (see \autoref{volume}).
\begin{thm}[Trace]\label{intro_trace}
There is a natural, explicitly defined trace map
\[ K(\C_{hG}) \xrightarrow{\ \tr\ } (HA)_{hG},  \]
which produces a map to group homology 
\[ K_n(\C_{hG})\xrightarrow{\ \tr\ } H_n(G;A).\]
Moreover, the composite of the trace map with the equivalence of \autoref{mainthm}
\[ K(\C)_{hG} \xrightarrow{\ \asm\ } K(\C_{hG}) \xrightarrow{\ \tr\ } (HA)_{hG}  \]
is the homotopy $G$-orbits of the map $K(\C) \to HA$ induced by the measure $\mu$.
\end{thm}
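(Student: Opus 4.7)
The plan is to lift the measure to a $G$-equivariant map of spectra, take homotopy $G$-orbits, and then use \autoref{mainthm} to identify the source as $K(\C_{hG})$. This approach makes the compatibility statement in the second half of the theorem essentially tautological.

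\textbf{Step 1: Lift the measure to an equivariant map of spectra.} The measure $\mu \colon K_0(\C) \to A$ is a $G$-equivariant homomorphism of abelian groups, and $HA$ is a $0$-truncated spectrum. By the universal property of Postnikov truncation, there is a canonical factorization
\[ K(\C) \longrightarrow H K_0(\C) \xrightarrow{\ H\mu\ } HA. \]
Both arrows are $G$-equivariant: Postnikov truncation is functorial on $G$-equivariant spectra, and $H\mu$ is $G$-equivariant because $\mu$ is.

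\textbf{Step 2: Define the trace.} Applying the homotopy $G$-orbits functor to the composite of Step 1 produces a map $K(\C)_{hG} \to (HA)_{hG}$. Precomposing with the inverse of the equivalence $\asm$ from \autoref{mainthm} defines
\[ \tr \colon K(\C_{hG}) \xleftarrow{\ \sim\ } K(\C)_{hG} \longrightarrow (HA)_{hG}. \]
Since $\pi_n\bigl((HA)_{hG}\bigr) = H_n(G;A)$, this induces the advertised map on homotopy groups. The compatibility with $\asm$ in the second half of the theorem is immediate from this definition, since $\tr \circ \asm$ is by construction the homotopy $G$-orbits of the map from Step 1.

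\textbf{Expected main obstacle.} The adjective ``explicit'' in the theorem statement suggests that the authors want a construction of $\tr$ directly on $K(\C_{hG})$, rather than one obtained by inverting $\asm$. This is crucial for applications that aim to detect nonzero classes in the higher scissors congruence groups, where one must evaluate $\tr$ on concrete simplicial representatives. I would obtain such a direct model from the bar-construction-style realization of $\C_{hG}$: evaluating $\mu$ at each simplicial level produces a compatible family of maps which assemble, via the standard bar model for homotopy orbits, into a map to $(HA)_{hG}$. The main technical task is then to verify that this direct construction agrees with the indirect one from Step 2, which would follow from the naturality of the assembly equivalence and the functoriality of Postnikov truncation on the simplicial levels.
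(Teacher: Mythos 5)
Your outline inverts the paper's logical structure and defers the theorem's central content to a sketch. You construct the abstract trace first, via $\asm^{-1}$ composed with the homotopy orbits of a Postnikov-truncation lift of $\mu$; this makes the compatibility clause tautological, and it does match the paper's \autoref{abstract_trace} up to the obstruction-theoretic identification in \autoref{maps_to_em}. So Steps 1 and 2 are fine. But the word ``explicitly defined'' in the statement is doing real work, and your ``expected main obstacle'' paragraph correctly identifies this without supplying it. The paper's actual proof runs in the opposite direction: Section~6 first writes down the explicit simplicial formula \eqref{trace_formula} for
\[
T\colon N_{\sbt} \W(\C_{hG}) \to N_{\sbt}^\otimes(\Z,\Z[G],A),
\]
which on a $p$-chain of morphisms in $\W(\C_{hG})$ records, for each piece in the finest cover, the entire string of iterated group elements
\[
g_{1|f_2\cdots f_p(i)} \otimes \cdots \otimes g_{p|i} \otimes \mu(P_{p|i}),
\]
not merely the measure evaluated ``at each simplicial level.'' Your sketch omits this formula, and it is the genuine content: the proof that $T$ is simplicial (\autoref{trace_simplicial}) uses the measure axiom $\mu(P) = \sum_i \mu(P_i)$ exactly at the last face map, and the empty-indexing-set edge case has to be handled. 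Your suggestion that agreement with the indirect definition ``would follow from the naturality of the assembly equivalence and the functoriality of Postnikov truncation on the simplicial levels'' undersells what \autoref{traces_agree} actually does: one first reduces by naturality of the explicit trace to $\C = \mathcal{E}A_*$, and then checks a commuting square of $\Gamma$-spaces before realization, which requires the explicit formula \eqref{elementinhtpyorbit} for the simplicial assembly map from \autoref{equivalence_three}. So while nothing in your plan is wrong, the proposal as written establishes existence of a map $K(\C_{hG}) \to (HA)_{hG}$ with the stated compatibility, but does not establish the ``explicitly defined'' part, which is the part the paper needs for the applications in \autoref{rotation_example} and \autoref{example:caryscoolpaperpolyhedra}.
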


In particular, taking $\mu$ to be the measure on polytopes in $X$ given by volume, we get a trace map on the higher scissors congruence groups,
\[ K_n(\Pol{X}{G})\xrightarrow{\ \tr\ } H_n(G;\R). \]
In \autoref{rotation_example} we describe a few explicit classes in the higher scissors congruence groups, and use this trace to prove they are nontrivial.

On the other hand, if we take take $A=K_0(\Pol{X}{1})$ to be the scissors congruence group with trivial isometries and take $\mu$ to be the identity map $K_0(\Pol{X}{1}) \to K_0(\Pol{X}{1})$,  results of \cite{scissors_thom} show that $K(\Pol{X}{1}) \to HA$ is a rational equivalence.  Hence the trace is rationally an inverse to the assembly map. In other words, it is powerful enough to detect all of the rational higher scissors congruence groups (see \autoref{example:caryscoolpaperpolyhedra}). The trace of \autoref{intro_trace} is therefore an essential tool for our computations, and we expect to deploy it further in future work.

\begin{rmk}
	The equivalence of \autoref{mainthm} is reminiscient of the assembly map in the $K$-theory of group rings,
\begin{equation}\label{intro_fj_assembly}
	 K(R)_{hG} \longrightarrow  K(R[G]).
\end{equation}
This is the source of the notation ``$\asm$'' in \autoref{mainthm}. The assembly map for rings is not an equivalence in general, but is conjectured to be an equivalence when $R$ is a regular ring and $G$ is torsion-free. This is the famous Farrell--Jones conjecture, see e.g.~\cite{assembly_survey,lrrv}.
	
	One must be careful because the notion of homotopy orbits that we use here for {\catcovs}, or more generally for multicategories, does not agree with homotopy orbits in the setting of idempotent-complete stable infinity categories. In that setting, the homotopy orbits of the category $\mathrm{Perf}(R)$ of perfect $R$-modules with respect to the trivial action of $G$ is computed as $\mathrm{Perf}(R)_{hG}\simeq \mathrm{Perf}(R[G])$, see \cite[Example 2.19]{CMNN} or \cite[Example 1.4]{bunke_cisinski_kasprowski_winges}. Thus, in that context, the classical Farrell--Jones conjecture is precisely about when $K$-theory commutes with homotopy orbits. This is in contrast to the setting of permutative categories or multicategories, where the two constructions always commute.

	We also remark that the compatibility of the trace map and the homotopy orbits in \autoref{intro_trace} parallels the ring case, where the Dennis trace
	\begin{equation*}\label{intro_dennis_trace}
          	 K(R[G]) \longrightarrow  \mathit{TC}(R[G]) \longrightarrow \mathrm{THH}(R[G]) 
	\end{equation*}
	is used in some cases to partially invert the assembly map, see e.g.~\cite{lrrv}.
\end{rmk}

\subsection{Conventions and notations}
 We let $\Spc$ denote the category of spaces, assumed to be compactly generated
weak Hausdorff.  We let $\Sp$ denote the category of spectra (referred to as ``prespectra''  in \cite{mandell_may_shipley_schwede}), whose levels are
assumed to be compactly generated weak Hausdorff as well.  $\Cat$ denotes the
category of (small) categories, and $\pCat$ 
 denotes the category of (small) categories with a distinguished ``base'' object, which need not be either initial or terminal.  
\subsection{Acknowledgments} 
The authors would like to thank Mike Mandell and Kate Ponto for useful conversations at the beginning of this work.  We thank Tony Elmendorf and Mike Mandell for patiently explaining to us their work on multicategories and Thomason's work on homotopy colimits of permutative categories. Merling also thanks George Raptis and Julia Semikina for valuable discussions.

The authors were partially supported by NSF grants for the Focused Research Group project Trace Methods and Applications for Cut-and-Paste $K$-theory, under the grant numbers DMS-2052849 (Bohmann), DMS-2052042 (Gerhardt), DMS-2052923 (Malkiewich), DMS-2052988 (Merling), and DMS-2052977 (Zakharevich). Additionally, Bohmann was partially supported by NSF grant DMS-2104300, Gerhardt was partially supported by NSF grant DMS-2104233,  Malkiewich was partially supported by NSF grant DMS-2005524,  Merling was partially supported by NSF grant DMS-1943925, and Zakharevich was partially supported by NSF grant  DMS-1846767. This material is in part based on work supported by the National Science Foundation under DMS-1928930, while the first, second and fourth authors were in residence at the Simons Laufer Mathematical Sciences Institute (previously known as MSRI) in Berkeley, California, during the Fall 2022 semester.

\section{{\Catcovs} and scissors congruence $K$-theory}

In this section we define the notion of a {\catcov} and explain how to
construct its $K$-theory spectrum.  A central example is scissors congruence
$K$-theory, which was defined by \cite{inna-assem} as the $K$-theory of an
``assembler.''   {\Catcovs} generalize assemblers, and our definitions here recover the scissors congruence $K$-theory of \cite{inna-assem}.  An advantage of this more general framework is that we recover interesting examples that do not arise from assemblers, including \autoref{ex:chaotic-group} below.

\begin{df}\label{family_structure}
  Let $\C$ be a category. A \emph{multi-morphism} in $\C$ is an object $B \in \C$ and a finite (possibly empty) set of maps in $\C$ with target $B$:
  \[ \{ f_i\colon A_i \to B \}_{i \in I}. \]
  A \emph{covering family structure} on $\C$ is a collection of multi-morphisms, called the \emph{covering families}, such that:
  \begin{itemize}
  	\item Each singleton containing an identity map $\{ A = A \}$ is a covering family.
  	\item Given a covering family
  	\[ \{ g_j\colon B_j \to C\}_{j \in J} \]
  	and for each $j \in J$ a covering family
  	\[ \{ f_{ij}\colon A_{ij} \to B_j \}_{i \in I_j}, \]
  	the collection of all composites $g_j \circ f_{ij}$ again forms a covering family:
  	\[ \{ g_j \circ f_{ij}\colon A_{ij} \to C \}_{j \in J, \ i \in I_j}. \]
      \end{itemize}

      A \emph{\catcov} is a small category $\C$, a covering family
      structure on $\C$, and a distinguished ``basepoint'' object $*\in \C$, such that
      \begin{itemize}
      \item[(B)] $\Hom(*,*) = \{1_*\}$ and $\Hom(c,*) = \initial$ for $c \neq *$.
      \item[(C)] For every finite (possibly empty) set $I$, the
        family $\{* \to *\}_{i\in I}$ is a covering family.
      \end{itemize}
      For a {\catcov} $\C$, we denote by $\C^\circ$ the full subcategory containing
      the non-basepoint objects of $\C$.
      
      A morphism of {\catcovs} is a functor $F\colon \C \to \D$ that preserves
      covering families and the basepoint. The category of {\catcovs} is denoted $\Catfam$.
\end{df}

\begin{rmk}
	The definition of a {\catcov} has an alternate phrasing that is more concise. The multi-morphisms of \autoref{family_structure} define a symmetric multicategory $\F\C$. A covering family structure on $\C$ is the same thing as a sub-multicategory $\widetilde{\C} \subseteq \F\C$.
\end{rmk}

\begin{rmk}\label{add_basepoint}
  If $\C$ has a covering family structure but no basepoint, we can add a basepoint in two different ways:
	\begin{itemize}
		\item A disjoint basepoint. We form the category $\C_+$ by adding  an object $*$ with
  \[\Hom(*,*) = \{1_*\} \qqand \Hom(*,c) = \Hom(c,*) = \initial \quad \forall
    c \neq *.\]
    		This becomes a {\catcov} if we insist that all families of the form $\{* \to *\}_{i\in I}$ are covering families.
    		\item An initial basepoint. We form the category $\C_*$ by adding an object $*$ and defining $\Hom(*,c)$ to be a one-point set for all $c$ and $\Hom(c,*)=\initial$ for all $c\neq *$.

    		For $c\in \C$, we declare that a collection of maps into $c$ is a covering family in $\C_*$ iff removing all the maps of the form $* \to c$ gives a covering family in $\C$. In particular, $* \to c$ is a cover in $\C_*$ iff $c$ admits an empty cover in $\C$.  If we also declare that families of the form $\{\ast\to \ast\}_{i\in I}$ are covering families, this construction yields a {\catcov} as well.
	\end{itemize}
\end{rmk}

There are many natural examples of \catcovs.

\begin{ex}
  Every Grothendieck site $\C$ is endowed with a covering family
  structure by taking the covering families to be all of the finite covers in the Grothendieck topology. This makes $\C_+$ and $\C_*$ into categories with covering families. This example is the source of the terminology.
\end{ex}

\begin{ex}\label{ex:groupascatwithoneobj}
  Let $G$ be a discrete group, considered as a category with one object. This
  has a covering family structure with covering families given by singleton morphisms $g$.
  Then $G_+$ and $G_*$ are categories with covering families.
\end{ex}

\begin{ex} \label{ex:chaotic-group}
  Let $A$ be a discrete abelian group, and write $\E\!A$ for the category whose
  set of objects is $A$ and in which there is a unique morphism between any two
  objects\footnote{This is sometimes also called the indiscrete or the chaotic
    category on the group $A$. Its classifying space is a model for the space
    $E\!A$.}.  We declare that $\{ a_i \to a \}_{i \in I}$ is a covering family if
  $\sum_{i\in I} a_i = a$. 
  In particular, the empty cover is a covering family of $0 \in A$. 
   It is tempting to declare this to be a {\catcov} with basepoint equal to the $0$
  element in $A$, but this will not work because $\Hom(a,0) \neq \initial$ for
  nonzero $a$.  Instead, we will use $\E\!A_+$ or $\E\!A_*$.
\end{ex}

\begin{ex} \label{ex:ordered-group} Let $G$ be a partially ordered discrete
  abelian group, considered as a category $\O_G$ under the partial order.  Then
  the full subcategory of elements $x \geq 0$ can be considered a {\catcov}
  $(\O_G)_{\geq 0}$ by declaring a family $\{a_i\to b\}_{i\in I}$ to be a
  covering family if $\sum_I a_i = b$.  With this definition, $(\O_G)_{\geq 0}$
  is a {\catcov} with basepoint equal to $0$.
\end{ex}

\begin{ex}\label{ex:assembler}
  An \emph{assembler} $\C$, originally defined in \cite{inna-assem},  is a
  Grothendieck site satisfying additional axioms. In particular, it has an
  initial object $\initial$, and two morphisms $A_i \to B$ and $A_j \to B$ are
  said to be disjoint if their pullback is $\initial$. Each assembler $\C$ can
  be considered a \catcov, using the
  pairwise-disjoint finite covering families of the Grothendieck structure.
  The initial object of the assembler is required to have the empty set as a
  covering family, and thus this will be a {\catcov} whose basepoint is the initial object $\initial$.  
\end{ex}

The motivating example of this paper is the category of polytopes. This forms an assembler, and therefore also a \catcov.
\begin{ex}\label{polytopeex}
  Let $X$ be any standard $n$-dimensional geometry (Euclidean, hyperbolic, or
  spherical), and let $G$ be any subgroup of the isometry group of $X$.  A polytope $P$ in $X$ is defined to be a finite union of nondegenerate $n$-simplices in $X$. Define $\Pol{X}{G}$ to be the category whose objects are the nonempty polytopes in $X$.   A morphism  $P \to Q$  in $\Pol{X}{G}$ consists of a choice of
  $g \in G$ such that $gP \subseteq Q$. In other words, morphisms in $\Pol{X}{G}$ allow us to move a
  polytope by any isometry in the subgroup $G$ and then include into another polytope.
 We define a covering family structure on $\Pol{X}{G}$ by declaring a collection of morphisms $\{ g_i\colon P_i \to P\}_{i\in I}$ to be a covering family if the polytopes $g_iP_i$ cover $P$ and the pairwise overlaps
  $g_iP_i \cap g_jP_j$ have measure zero. Equivalently, the union of all the $g_iP_i$ is equal to $P$ and each pair $g_iP_i$ and
  $g_jP_j$ has disjoint interiors. See also \cite[Example 3.6]{inna-assem}.
\end{ex}

We conclude the discussion of the definition with an important observation about
morphisms of \catcovs.

\begin{ex} \label{ex:collapse}
  There is a \emph{trivial \catcov}, $*$, whose underlying category is the
  trivial category with one object and only an identity morphism.  The family
  structure is the one enforced by Axiom (C).  Every {\catcov} $\C$ has a unique
  morphism $\C \to *$.
\end{ex}

Given a {\catcov} $\C$ and a pointed set $X$, we define a new {\catcov} $X\sma\C$ and observe that this construction is functorial in $X$.

  Let $X$ be a pointed set.  We can consider $X$ as a category by defining morphism sets $\Hom(a,b)$ to be one-point sets when $a=b$ or when $a=*$ is the basepoint and defining all other morphism sets to be empty.  That is, we take the discrete category on the non-basepoint elements of $X$ and then force the basepoint to be initial.

\begin{df} \label{wedgeex}
  Suppose $\C$ is a \catcov, and $X$ is a pointed set. We define a {\catcov} $X
  \wedge \C$ by $X\sma \C=\bigvee_{X^\circ}\C$, where $X^\circ$ denotes the non-basepoint elements of $X$.  In other words, 
  \[\ob (X \sma \C) = (\ob X \times \ob \C) / (\ob X \vee \ob \C),\]
  with morphisms induced from the morphisms in $X \times \C$, where the category
  structure on $X$ is as above.  The basepoint object in $X\sma C$ is the wedge point.  The full subcategory of
  non-basepoint objects is of the form $\coprod_{x\in X^\circ} \C^\circ$. Aside from the maps out of the basepoint, the  morphisms are internal to each copy of $\C^\circ$.

  The family structure is given by declaring $\{A_i \to B\}_{i\in I}$ to be a
  covering family if all $A_i$ and $B$ are contained in a single copy of $\C$,
  and it is a covering family in $\C$.  (The basepoint is considered to be in
  all copies of $\C$.)

   A map  of based sets $f\colon X \to Y$ induces a functor
   \[ X \sma \C \to Y \sma \C \] by applying $f$ to the indexing labels. This
   preserves covering family structure: the main thing to check is that when $f$
   takes an element $x\in X^\circ$ to the basepoint in $Y$, the induced functor
   $\C \to \{*\}$ preserves covers.  This holds, as noted in
   \autoref{ex:collapse}. We therefore get a functor from based sets to
   \catcovs
  \[ (-) \sma \C\colon \Set_* \to \Catfam. \]
\end{df}

As a step towards  constructing the algebraic $K$-theory of a {\catcov} $\C$, we define a ``category of covers'' associated to $\C$.  This generalizes the construction of the category of covers of an assembler, used in \cite{inna-assem} to construct the $K$-theory of an assembler.  See \autoref{rmk:catcoverforassemblers} below for this example.

\begin{df}\label{category_of_covers}
  For a {\catcov} $\C$, we define the \emph{category of covers}
  $\W(\C)$ as follows:
\begin{itemize}
\item An object is a finite tuple $\{A_i\}_{i\in I}$ of objects in
  $\C^\circ$. We allow $I$ to be empty.
  \item A morphism $\{A_i\}_{i\in I} \to \{B_j\}_{j\in J}$ is a
    morphism $f\colon I \to J$ of finite sets and covering families in $\C$
    \[ \{ f_i\colon A_i \to B_j \}_{i\in f^{-1}(j)} \]
    for each $j\in J$. In other words, the $A_i$ are partitioned into subsets, and each subset covers one of the $B_j$.
  \item We compose morphisms by composing the maps of finite sets and then composing the covers as in \autoref{family_structure}. In other words, we use the composition in the multicategory $\F\C$. 
  \end{itemize}
\end{df}

\begin{rmk}\label{basepoint_issue}
	The category $\W(\C)$ has a natural basepoint object, corresponding to $I = \initial$. It may not be a disjoint basepoint, since $\C$ could have objects that are covered by the empty family, as in \autoref{ex:chaotic-group}.
	
	Note also that if $\C$ lacks a basepoint and we form $\C_+$ and $\C_*$ as in \autoref{add_basepoint}, the resulting categories $\W(\C_+)$ and $\W(\C_*)$ are canonically isomorphic. In other words, maps of the form $* \to c$ do not affect the definition of $\W(\C)$.
      \end{rmk}

\begin{df}
  The category $\pCat$ has as objects small categories with a distinguished
  object $*$.  This object is not required to be either initial or terminal.
 \end{df}

\begin{rmk}
The fact that the distinguished object is not required to be either initial or terminal is significant because the nerve of a category with initial or terminal object is contractible.  In \autoref{ktheory} we define $K$-theory by taking nerves of categories of covers, which we now show are categories in $\pCat$.
\end{rmk}

\begin{lem}\label{w_functor}
	The category of covers construction $\W(-)$ forms a functor 
	\[ \W(-)\colon \Catfam \to \pCat. \]
\end{lem}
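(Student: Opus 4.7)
The plan is to define $\W(F)$ by applying $F$ componentwise and discarding any indices whose image under $F$ is the basepoint of $\D$, then verifying that covering families are preserved and that the construction is functorial.

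Given a morphism $F\colon \C \to \D$ in $\Catfam$, I would define $\W(F)$ on objects by $\{A_i\}_{i \in I} \mapsto \{F(A_i)\}_{i \in I'}$, where $I' := \{\, i \in I : F(A_i) \neq *_\D \,\}$. In particular the basepoint of $\W(\C)$, corresponding to $I = \initial$ (see \autoref{basepoint_issue}), is sent to the basepoint of $\W(\D)$. For a morphism $(f\colon I \to J, \{f_i\colon A_i \to B_{f(i)}\})$ in $\W(\C)$, I first note that $f$ restricts to a map $f'\colon I' \to J'$: if $i \in I'$, then $F(f_i)\colon F(A_i) \to F(B_{f(i)})$ has non-basepoint source, so axiom (B) applied in $\D$ forces $F(B_{f(i)}) \neq *_\D$, giving $f(i) \in J'$. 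Then set $\W(F)(f, \{f_i\}) := (f', \{F(f_i)\}_{i \in I'})$.

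The main step to verify is that for each $j \in J'$, the restricted family $\{F(f_i)\colon F(A_i) \to F(B_j)\}_{i \in (f')^{-1}(j)}$ is still a covering family in $\D$. Since $F$ preserves covers, the full family $\{F(f_i)\}_{i \in f^{-1}(j)}$ is a covering family of $F(B_j)$; to drop the basepoint sources, I apply the composition axiom, refining each $F(A_i)$ by its singleton identity cover when $F(A_i) \neq *_\D$, and by the empty cover of $*_\D$ (valid by axiom (C)) when $F(A_i) = *_\D$. The resulting composite family is precisely $\{F(f_i)\}_{i \in (f')^{-1}(j)}$, which is therefore a covering family. This discarding step is the main obstacle; everything else is formal.

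Functoriality is then straightforward. Identities in $\W(\C)$ are identity maps of finite sets together with identity covers, both of which are preserved under $F$ and the index restriction. Composition is computed by composing the underlying set maps and composing covers in the multicategory $\F\C$; both operations commute with the restriction from $I$ to $I'$, because an index $i$ with $F(A_i) = *_\D$ is automatically absent from every tuple further along in the composition. Hence $\W(F \circ G) = \W(F) \circ \W(G)$ and $\W(\id_\C) = \id_{\W(\C)}$, and $\W(F)$ sends the basepoint to the basepoint, so $\W(-)\colon \Catfam \to \pCat$ is a functor.
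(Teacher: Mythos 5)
Your proof is correct and follows the same basic approach as the paper---apply $F$ componentwise---but is substantially more careful and fills in a detail the paper's one-line proof elides. The paper defines $\W(F)$ simply by ``applying $F$ to the objects $A_i$'' and does not confront the fact that a morphism $F$ in $\Catfam$ may send non-basepoint objects of $\C$ to the basepoint of $\D$ (as the collapse $\C \to *$ of \autoref{ex:collapse} does), so that $\{F(A_i)\}_{i\in I}$ is a priori not a tuple in $\D^\circ$. You handle this correctly by restricting to $I' = \{\,i : F(A_i) \neq *_\D\,\}$, using Axiom~(B) to see that $f$ descends to a map $I' \to J'$, and showing the restricted family remains a cover by composing with singleton identity covers where $F(A_i) \neq *_\D$ and with the empty cover (Axiom~(C)) where $F(A_i) = *_\D$. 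This is exactly the intended behavior---compare the coordinate projections $\Pi_x$ in the proof of \autoref{weak_additivity}, which drop the components collapsed to the basepoint. The one compressed spot is the final ``Hence,'' which conflates two distinct verifications (that each $\W(F)$ is itself a functor $\W(\C) \to \W(\D)$, and that $\W(-)$ respects composition in the $F$ variable); the latter also requires the small observation that the discarded index sets for $\W(F\circ G)$ and $\W(F)\circ\W(G)$ agree, which holds because $F$ preserves the basepoint, so $G(A_i)=*$ forces $F(G(A_i))=*$.
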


\begin{proof}
	For each functor $F\colon \C \to \D$ preserving covering families and
        the distinguished basepoint, we define $\W(\C) \to \W(\D)$ by applying
        $F$ to the objects $A_i$. This is well defined because $F$ preserves covering families.
	
	It is clear that this operation respects composition and identity maps in $\W(\C)$, so that $\W(F)$ is a morphism in $\pCat$.  It also respects compositions and identities in the $F$ variable so that $\W(-)$ is a functor $\Catfam\to\pCat$.
\end{proof}

We now define the algebraic $K$-theory of a category with covering families.
 \begin{df}\label{ktheory}
   Let $\C$ be a category with covering families. By combining \autoref{w_functor} and \autoref{wedgeex},  we get a functor from finite based sets to based spaces
\[ X \mapsto \left| N_{\sbt} \W\left( X \sma \C \right) \right|. \]
By definition, this is a $\Gamma$-space \cite{segal,bousfield_friedlander}. The \emph{$K$-theory spectrum} $K(\C)$ is the symmetric spectrum associated to this $\Gamma$-space. Concretely, at spectrum level $k$, it is the realization of the multisimplicial set
\begin{equation*}\label{sck_defn}
	[p,q_1,\dots,q_k] \mapsto N_p \W\big( S^1_{q_1} \sma \dotsb \sma S^1_{q_k} \sma \C \big),
\end{equation*}
where $S^1_{\sbt}$ denotes the simplicial circle. Taking the diagonal, this is also the realization of the simplicial set
\begin{equation*}
	p \mapsto N_p \W\big( S^k_p \sma \C \big),
\end{equation*}
where $S^k_{\sbt} = (S^1_{\sbt})^{\sma k}$ is the simplicial $k$-sphere. The bonding maps of the spectrum arise from the identifications of finite based sets
\[  S^1_{q_1} \sma \dotsb \sma S^1_{q_k} \cong S^1_{q_1} \sma \dotsb \sma S^1_{q_k} \sma S^1_1. \]
\end{df}

It is a consequence of \autoref{weak_additivity} below that this $\Gamma$-space is special. It follows that this spectrum is a positive $\Omega$-spectrum and is therefore semistable (see e.g.~\cite{schwede_symmetric_spectra}, \cite[Prop 5.6.4]{hovey_shipley_smith}). We are therefore free to ignore the symmetric spectrum structure and treat it as an ordinary sequential spectrum.

\begin{rmk}\label{rmk:catcoverforassemblers}
	When $\C$ is an assembler, as in \autoref{ex:assembler}, \autoref{ktheory} agrees with the $K$-theory of assemblers defined in \cite[Def 2.12]{inna-assem}.
\end{rmk}

\begin{ex}\label{polytope_spectrum}
  The $K$-theory of the category $\Pol{X}{G}$ from
  \autoref{polytopeex} is the \emph{scissors congruence $K$-theory spectrum}
  $K(\Pol{X}{G})$ (see \cite{inna-assem}). Its lowest homotopy group $K_0(\Pol{X}{G})$ is the classical
  scissors congruence group $\mathcal P(X,G)$ from
  e.g.~\cite{dupont_book,sah_79}.  
\end{ex}
      
\begin{ex}\label{KtheoryBG}
  Let $G$ be a discrete group. As in \autoref{ex:groupascatwithoneobj}, consider $G$ as a category with one object, and add a basepoint using \autoref{add_basepoint}. In fact, it does not matter whether we take $G_+$ or $G_*$; they give isomorphic $K$-theory spectra by \autoref{basepoint_issue}.
  
  We claim the resulting spectrum is stably equivalent to $\Sigma^\infty_+ BG$. 
    To see this, we observe that the category $\W(X \sma G_*)$ is exactly the same as $\W(X \sma
  \mathbb{S}_G)$, where $\mathbb{S}_G$ is the assembler in \cite[Example
  3.2]{inna-assem}. As observed there, the
  $\Gamma$-space $\left|N_{\sbt}\W(X\sma \mathbb{S}_G)\right|$ associated to the
  assembler $\mathbb{S}_G$, or equivalently, to the {\catcov} $G_\ast$,  defines the spectrum $\Sigma^\infty_+ BG$ by a version of the Barratt--Priddy--Quillen--Segal Theorem. 
\end{ex}

\begin{ex}
  Let $A$ be a discrete abelian group, and consider the category with covering families
  $\E\! A_*$ from Example~\ref{ex:chaotic-group}. Then there is an equivalence
  $K(\E\! A_*) \simeq HA$ where $HA$ denotes the Eilenberg--MacLane spectrum on
  $A$. See \autoref{ex:EM} below for more details. 
\end{ex}

Before we end this section, we observe that our construction of the $K$-theory spectrum of a {\catcov} is a special case of a general technique.  This observation, along with its equivariant analogue in \autoref{G_spectrification}, will allow us to compare different constructions of $K$-theory in \autoref{sec:main}. 

\begin{df}\label{spectrification}
  Let $F\colon \Catfam \to \Spc_*$ be a functor from {\catcovs} to pointed spaces.  We can use $F$ to construct a
  functor $X_F\colon \Catfam \to \Gamma\Spc$ given on a {\catcov} $\C$ by
  \[X \longmapsto F(X \sma \C).\]  Let $K_F$ denote the
  functor from $\Catfam$ to the category of spectra given by mapping $\C$ to the spectrum associated to this $\Gamma$-space.
\end{df}

In the language of this definition, the $K$-theory spectrum of $\C$ from \autoref{ktheory} is denoted
\[K(\C) = K_{|N_{\sbt}\W(-)|}(\C).\]

The construction of spectra $K_F$ in \autoref{spectrification} is natural, in that a natural transformation between two functors $F_1, F_2\colon\Catfam \to \Spc_*$ induces a map of spectra between $K_{F_1}$ and $K_{F_2}$, and a natural equivalence of such functors induces a levelwise equivalence of spectra. In \autoref{G_spectrification}, we generalize this construction to $G$-{\catcovs}. Then, in \autoref{sec:main}, we use various categorical models to produce such natural transformations and show these induce the natural assembly map of \autoref{mainthm}.

\section{Weak products}

In this section we recall the  construction of the weak product. Fix an ambient pointed category with products, colimits, and a zero object. The four examples we use are the categories $\Sp$, $\Spc_*$, $s\Set_*$, and $\pCat$. Note that the one-object category with only the identity morphism is a zero object in $\pCat$.
\begin{df}
	For any set of objects $\{X_\alpha\}_{\alpha \in A}$, their \emph{weak product} is defined as
	\[ \bigoplus_{\alpha \in A} X_\alpha := \underset{I \subseteq A \textup{ finite}}\colim\, \prod_{i \in I} X_i, \]
	the colimit of the finite products over the finite subsets $I \subseteq A$.
\end{df}

\begin{ex}\label{weak_product_spectra}
	In the category $\Sp$ of spectra, the weak product 
\[ \bigoplus_\alpha X_\alpha \subseteq \prod_\alpha X_\alpha \]
	is formed by taking at each spectrum level the subspace of the product in which only finitely many coordinates are not the basepoint. This colimit is a homotopy colimit of spaces by the usual argument in e.g.\ \cite[Lem 3.6]{strickland_cgwh}. The weak product in based spaces and in based simplicial sets is similar.
\end{ex}

\begin{ex}
  Consider a set of categories $\{\C_\alpha \in \pCat\}_{\alpha \in A}$. The
  weak product category
\[ \bigoplus_\alpha \C_\alpha \subseteq \prod_\alpha \C_\alpha \]
is the subcategory whose objects are tuples that are the basepoint object in
all but finitely many coordinates, and whose morphisms are tuples that are the
identity morphism of the basepoint object in all but finitely many
coordinates.  This is also an object in $\pCat$.
\end{ex}

\begin{lem}\label{weak_product_comm}
	The weak product commutes up to canonical isomorphism with both the nerve and the realization. In particular,
	\[
		\Bigl| N_{\sbt} \bigoplus_\alpha \C_\alpha\Bigr|
		\cong \Bigl| \bigoplus_\alpha N_{\sbt} \C_\alpha\Bigr|
		\cong \bigoplus_\alpha \bigl|N_{\sbt} \C_\alpha\bigr|.
	\]
\end{lem}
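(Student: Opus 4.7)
The plan is to use the presentation of the weak product as a filtered colimit of finite products,
\[ \bigoplus_\alpha X_\alpha = \underset{I \subseteq A \textup{ finite}}\colim\, \prod_{i \in I} X_i, \]
and verify separately that both $N_{\sbt}$ and $|-|$ commute with each of the two operations involved: finite products and filtered colimits. This reduces the claim to two standard facts for each functor.

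For the first isomorphism, I would check that the nerve $N_{\sbt}\colon \pCat \to s\Set_*$ preserves finite products and preserves filtered colimits. Preservation of finite products is levelwise: a chain of $p$ composable morphisms in $\C \times \D$ is the same data as a pair consisting of a chain in $\C$ and a chain in $\D$, so $N_p(\C \times \D) \cong N_p \C \times N_p \D$. Preservation of filtered colimits follows from the fact that each ordinal $[p]$ is a finitely presentable object in $\Cat$, so $N_p(-) = \Hom_\Cat([p],-)$ commutes with filtered colimits. Combining these,
\[ N_{\sbt} \Bigl(\bigoplus_\alpha \C_\alpha\Bigr) \cong N_{\sbt} \colim_I \prod_{i \in I} \C_i \cong \colim_I \prod_{i \in I} N_{\sbt} \C_i \cong \bigoplus_\alpha N_{\sbt} \C_\alpha. \]
The distinguished basepoints correspond correctly because the basepoint of $\prod_{i \in I} \C_i$ is $(*,\dotsc,*)$ and the transition maps in the colimit insert the basepoint object in new coordinates.

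For the second isomorphism, I would invoke that the geometric realization $|-|\colon s\Set_* \to \Spc_*$ is a left adjoint (to the singular functor), hence preserves all colimits, including the filtered one; and that under the convention that $\Spc_*$ consists of compactly generated weak Hausdorff pointed spaces, realization preserves finite products levelwise. Therefore
\[ \Bigl|\bigoplus_\alpha N_{\sbt} \C_\alpha\Bigr| \cong \Bigl|\colim_I \prod_{i \in I} N_{\sbt} \C_i\Bigr| \cong \colim_I \prod_{i \in I} |N_{\sbt} \C_i| \cong \bigoplus_\alpha |N_{\sbt} \C_\alpha|, \]
as in \autoref{weak_product_spectra}, and again the basepoints match under each identification.

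The step requiring the most care is the compatibility of the categorical weak product description (as a subcategory of the product of tuples that are the basepoint in all but finitely many coordinates) with the filtered-colimit description used above; this is straightforward since each chain of $p$ composable morphisms in $\bigoplus_\alpha \C_\alpha$ touches only finitely many coordinates at once, and the inclusions $\prod_{i \in I} \C_i \hookrightarrow \bigoplus_\alpha \C_\alpha$ that fill in basepoints in the complement of $I$ exhaust the weak product. All isomorphisms above are evidently natural in $\{\C_\alpha\}$, which is what is needed for later applications.
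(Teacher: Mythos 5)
Your proof is correct, and for the realization step it matches the paper exactly: realization is a left adjoint (so commutes with all colimits, hence with the filtered colimit) and preserves finite products in compactly generated weak Hausdorff spaces. Where you diverge is the nerve step. You apply the same abstract strategy---filtered colimits plus finite products, using that $[p]$ is finitely presentable in $\Cat$ so that $N_p = \Hom_{\Cat}([p],-)$ preserves filtered colimits---whereas the paper argues directly at the level of simplices: a chain of $k$ composable morphisms in $\bigoplus_\alpha \C_\alpha$ is nontrivial in only finitely many coordinates, so $N_k\bigl(\bigoplus_\alpha \C_\alpha\bigr)$ is literally the same set as $\bigoplus_\alpha N_k \C_\alpha$. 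Your version is more systematic and makes the two halves of the lemma parallel; the paper's is more concrete and avoids invoking finite presentability, which is useful if the reader wants to see the identification explicitly (the direct argument is, after all, exactly what your abstract argument unwinds to). One small point worth having stated: your filtered-colimit presentation of the weak product is taken in $\pCat$, and the transition maps fill in the basepoint object; you do note that the basepoints line up, and the forgetful functor $\pCat\to\Cat$ creates filtered colimits, so everything is consistent.
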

\begin{proof}
	The commutation with the nerve is simple: a $k$-tuple of composable arrows in $\bigoplus_\alpha \C_\alpha$ is, for each morphism in the tuple, nontrivial in only finitely many coordinates, and therefore across the entire $k$-tuple is only nontrivial in finitely many coordinates. It is therefore the same thing as an element of $\bigoplus_\alpha N_k \C_\alpha$.
	
	The realization commutes with the weak product because it commutes with both filtered colimits and finite products.
\end{proof}

\begin{lem}\label{wedge_to_weak_product}
For spectra $\{X_\alpha\}_{\alpha\in A}$, the inclusion of the wedge sum $\bigvee_\alpha X_\alpha$ into the weak product $\bigoplus_\alpha X_\alpha$ is a stable equivalence.
\end{lem}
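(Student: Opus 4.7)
The plan is to reduce to the finite case and exploit the fact that stable equivalences are preserved under filtered homotopy colimits. Both sides of the map in question are naturally filtered colimits over the poset of finite subsets $I \subseteq A$: the weak product is defined this way in \autoref{weak_product_spectra}, and the wedge sum similarly decomposes as
\[ \bigvee_{\alpha\in A} X_\alpha \;\cong\; \underset{I \subseteq A \textup{ finite}}\colim\, \bigvee_{i \in I} X_i. \]
The inclusion of the wedge into the weak product is induced, compatibly in $I$, by the comparison maps $\bigvee_{i \in I} X_i \to \prod_{i \in I} X_i$ for each finite $I$.

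First, I would verify that both colimits are computing filtered homotopy colimits. At each spectrum level, the transition maps $\bigvee_{i \in I} X_i \to \bigvee_{i \in J} X_i$ and $\prod_{i \in I} X_i \to \prod_{i \in J} X_i$ for $I \subseteq J$ are closed inclusions of compactly generated weak Hausdorff based spaces (they add basepoint coordinates in the extra factors), so the standard argument cited in \autoref{weak_product_spectra} applies levelwise to show that these filtered colimits are homotopy colimits in $\Sp$.

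Next, I would check that for each finite $I$, the canonical map $\bigvee_{i \in I} X_i \to \prod_{i \in I} X_i$ is a stable equivalence. This is the classical fact that finite wedges and finite products of spectra agree stably, provable by induction on $|I|$: the base case $|I| \leq 1$ is trivial, and the inductive step uses that for two spectra $X$ and $Y$ the cofiber sequence $X \vee Y \to X \times Y \to X \wedge Y/\!\sim$ exhibits the comparison map as a stable equivalence, together with the compatibility of $\vee$ and $\times$ with the inductive hypothesis.

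Finally, since filtered homotopy colimits of spectra commute with homotopy groups, a levelwise (in the indexing poset) stable equivalence induces a stable equivalence on filtered homotopy colimits. Combining the three steps, the map $\bigvee_\alpha X_\alpha \to \bigoplus_\alpha X_\alpha$ is a stable equivalence. The main obstacle here is essentially bookkeeping: ensuring that the model of spectra (prespectra with compactly generated weak Hausdorff levels) really does make the filtered colimits into homotopy colimits, so that the finite-case comparison can be transported to the infinite case without further cofibrancy hypotheses.
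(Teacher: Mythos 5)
Your approach is essentially the paper's: both rest on the identification of the weak product as a filtered homotopy colimit of finite products and on the fact that filtered homotopy colimits commute with stable homotopy groups. The paper computes $\pi_*$ of both sides directly (using $\pi_*\bigl(\bigvee_\alpha X_\alpha\bigr) \cong \bigoplus_\alpha \pi_*(X_\alpha)$ and the analogous statement for the weak product obtained from the colimit description) and observes the comparison map intertwines these isomorphisms; you instead write both sides as compatible filtered colimit diagrams and reduce to the finite wedge-to-product comparison. This is a mild reorganization of the same argument, not a genuinely different route.

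One caveat on a side remark: your justification of the finite case via ``the cofiber sequence $X \vee Y \to X \times Y \to X\wedge Y$'' is not quite right. Having cofiber $X \wedge Y$ is the space-level statement and does not by itself make the map a stable equivalence (and for prespectra, the levelwise smash isn't even a well-behaved construction). The standard argument is simply that $\pi_*(X \vee Y) \cong \pi_*(X) \oplus \pi_*(Y) \cong \pi_*(X \times Y)$ compatibly with the inclusion, or equivalently that finite products and coproducts coincide in the additive stable homotopy category. Since you also invoke the finite case as a classical fact, this is a cosmetic slip rather than a gap in the proof.
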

\begin{proof}
	Denote spectrum level $k$ of $X_\alpha$ by $X_\alpha(k)$. At level $k$, the subspace
	\[ \bigoplus_\alpha X_\alpha(k) \subseteq \prod_\alpha X_\alpha(k) \]
	is the filtered colimit of the products over all the finite subsets of $A$, ordered by inclusion. As in \autoref{weak_product_spectra}, this colimit is a homotopy colimit of spaces. A homotopy colimit taken at each spectrum level defines a homotopy colimit of spectra, showing that $\bigoplus_\alpha X_\alpha$ is the filtered homotopy colimit of the finite products of the spectra $X_\alpha$.
	
	Filtered homotopy colimits give colimits on homotopy groups, and therefore the inclusion of each $X_\alpha$ into the weak product induces an isomorphism on stable homotopy groups
	\[  \bigoplus_\alpha \pi_*(X_\alpha) \xrightarrow{\ \cong\ } \pi_*\Big(\bigoplus_\alpha X_\alpha\Big).  \]
	The wedge sum $\bigvee_\alpha X_\alpha$ also has this property, so the inclusion of the wedge into the weak product is an isomorphism on homotopy groups. Hence the inclusion is a stable equivalence of spectra.
\end{proof}

 Let $X$ be a pointed set and $\C$ be a \catcov.  For each $x\in X^\circ$, let
  $\Pi_x$ denote the map of pointed sets $X\to \{x,*\}$ that preserves $x$ and collapses all other
  elements of $X$ to $*$.  The maps $\Pi_x$ induce a functor
\[  \Pi\colon \W(X\sma \C)\to \prod_{X^\circ} \W(\C).\]

\begin{lem}\label{weak_additivity}
The image of $\Pi$ lands in the weak product, and the induced functor
\[ \widetilde{\Pi}\colon \W(X\sma \C)\to \bigoplus_{X^\circ}\W(\C)\]
is an equivalence of categories. 
\end{lem}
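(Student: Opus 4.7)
My plan is to prove the stronger statement that $\widetilde\Pi$ is an isomorphism of categories by constructing an explicit inverse functor $\Psi$; the entire argument is powered by the single-copy condition on covers from \autoref{wedgeex}, which requires every covering family in $X\sma\C$ to sit inside a single copy of $\C$ (with the basepoint allowed in all copies). As a first step I would check that $\Pi$ lands in the weak product: an object $\{A_i\}_{i\in I}$ of $\W(X\sma\C)$ is a finite tuple of non-basepoint objects, and each $A_i$ carries a unique label $x_i\in X^\circ$ identifying its copy of $\C^\circ$; since $I$ is finite, only finitely many such labels appear, so $\Pi_x\{A_i\}_{i\in I}$ is the empty (basepoint) tuple for all but finitely many $x$.

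Next I would define $\Psi\colon \bigoplus_{X^\circ}\W(\C) \to \W(X\sma\C)$. On objects, send $\bigl(\{A^{(x)}_i\}_{i\in I_x}\bigr)_{x\in X^\circ}$, with $I_x=\emptyset$ for all but finitely many $x$, to the tuple indexed by the finite set $\coprod_x I_x$ whose $(x,i)$-entry is $A^{(x)}_i$ regarded as an object of the $x$-th copy of $\C^\circ\subseteq X\sma\C$. For morphisms, the key observation is that a morphism $\{A_i\}_I \to \{B_j\}_J$ in $\W(X\sma\C)$ consists of a set map $f\colon I\to J$ together with covers $\{A_i\to B_j\}_{i\in f^{-1}(j)}$, and the single-copy condition forces $x_i = x_{f(i)}$ for every $i\in I$. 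Thus $f$ is the disjoint union of its restrictions $f_x\colon I_x\to J_x$ (where $I_x=\{i\in I:x_i=x\}$ and similarly for $J_x$), and the morphism splits uniquely as a tuple of morphisms in $\W(\C)$ indexed by $X^\circ$. This splitting simultaneously defines $\Psi$ on morphisms and shows that $\widetilde\Pi$ is bijective on Hom sets.

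Finally I would verify the composites. The composite $\widetilde\Pi\circ\Psi$ is the identity on the nose, while $\Psi\circ\widetilde\Pi$ sends $\{A_i\}_I$ to its reindexed form $\{A_i\}_{\coprod_x I_x}$, which is naturally isomorphic to the identity via the label-sorting bijection $I \cong \coprod_x I_x$. The main potential obstacle is this indexing-set bookkeeping; otherwise the proof reduces entirely to the single-copy condition on covering families in $X\sma\C$.
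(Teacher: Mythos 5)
Your argument is correct and uses the same essential idea as the paper's proof: the single-copy condition on covers in $X\sma\C$ forces $x_i = x_{f(i)}$ along any morphism, so objects and morphisms in $\W(X\sma\C)$ split over the labels $x\in X^\circ$, and the only remaining issue is reindexing bookkeeping. The paper packages this as essential surjectivity plus fully faithfulness of $\widetilde\Pi$, whereas you construct an explicit quasi-inverse $\Psi$; these are equivalent presentations of the same argument.

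One small imprecision worth flagging: you announce that you will prove the stronger claim that $\widetilde\Pi$ is an \emph{isomorphism} of categories, but what your composites actually give is an equivalence. You already concede that $\Psi\circ\widetilde\Pi$ is only naturally isomorphic to the identity via the bijection $I\cong\coprod_x I_x$; and with the standard model $\coprod_x I_x = \bigcup_x\{x\}\times I_x$, the other composite $\widetilde\Pi\circ\Psi$ is likewise only isomorphic to the identity (its $x$-coordinate is indexed by $\{x\}\times I_x$ rather than $I_x$), not equal to it on the nose. None of this damages the proof --- equivalence is exactly what the lemma asserts --- but the opening sentence over-promises and the claim about $\widetilde\Pi\circ\Psi$ should be softened accordingly.
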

This result and its proof are entirely analogous to \cite[Prop 2.11(3)]{inna-assem} in the case of assemblers.

\begin{proof}
Recall that $X^\circ$ is $X$ minus the basepoint.  For each $x\in X^\circ$, the map of based sets $\Pi_x\colon X\to \{x,*\}$ induces a functor of \catcovs
\[X\sma \C\cong \bigvee_{x\in X^\circ} \C\to \{x,*\}\sma \C\cong \C,\]
as in \autoref{wedgeex}.   Explicitly, this functor is the identity on the copy of $\C$ indexed by $x$ and collapses the remaining copies of $\C$ to the basepoint object $*\in \C$.  Applying the construction $\W$ of \autoref{w_functor}, we obtain functors $\W(\Pi_x)\colon \W(X\sma \C)\to \W(\C)$ for each $x$.   These form the coordinates of the functor 
\[ \Pi\colon \W(X\sma \C)\to \prod_{x\in X^\circ} \W(\C).\]
 Unpacking the constructions here, we see that an object in the source category is a finite set $I$ and a tuple of pairs $\{(x_i,A_i)\}_{i \in I}$ indexed by $I$, where $x_i \in X^\circ$ and $A_i \in \C$. The $x$th coordinate of the functor $\Pi$ restricts this tuple to those pairs in which $x_i = x$.   Since $I$ is finite, we see that $\Pi$ must factor through the weak product.

For each $x \in X^\circ$, let $I_x \subseteq I$ be the subset of those elements $i\in I$ for which  $x_i=x$. Then each object in the source category can be re-described as a collection of \emph{disjoint} sets $I_x$ indexed by $x \in X^\circ$, all but finitely many of which are empty, and objects $A_i \in \C$ for each $i \in I_x$.  In this notation, the $x$th coordinate of $\Pi$ sends an object in the source category to the set $I_x$ and the objects $A_i$ indexed by $I_x$.

	On the other hand, an object in the weak product category is a collection of sets $I_x$ for $x \in X^\circ$, all but finitely many of which are empty, and objects $A_i \in \C$ for each $i \in I_x$. The sets $I_x$ need not be disjoint. However, by applying bijections to these sets, we may replace this object with an isomorphic object where the $I_x$ are disjoint.  Define an object $\{(x_j,A_j)\}_{j\in J}$ in $\W(X\sma \C)$ by setting $J=\amalg_{x}I_x$ and letting $(x_j,A_j)=(x,A_i)$ when $j=i\in I_x$. By construction, $\widetilde{\Pi}$ sends this object to the chosen object in the weak product category; thus $\widetilde{\Pi}$ is essentially surjective.

	In the source, the morphisms are maps of finite sets $f\colon I \to J$ such that $x_i = x_{f(i)}$, together with covers $\{ A_i \to B_j \}_{i \in f^{-1}(j)}$ for $j \in J$. This is the same data as, for each $x \in X^\circ$, maps of sets $I_x \to J_x$ and covers $\{ A_i \to B_j \}_{i \in f^{-1}(j)}$ for $j \in J_x$. This shows that $\widetilde{\Pi}$ is also fully faithful.
\end{proof}

\begin{cor}
	The $\Gamma$-space defining $K(\C)$ in \autoref{ktheory} is special.
\end{cor}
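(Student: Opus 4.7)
The plan is to deduce the specialness of the $\Gamma$-space directly from \autoref{weak_additivity}. Recall that a $\Gamma$-space $F$ is \emph{special} when, for every finite pointed set $X$, the Segal map
\[ F(X) \longrightarrow \prod_{x\in X^\circ} F(\{x,*\}) \]
induced by the collapse maps $\Pi_x\colon X \to \{x,*\}$ is a weak equivalence. In our setting, $F(X) = |N_{\sbt} \W(X \sma \C)|$, and the Segal map is the realization of the nerve of the functor $\Pi$ appearing in \autoref{weak_additivity}.

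The first step is to observe that when $X$ is a finite pointed set, $X^\circ$ is a finite set, so the weak product $\bigoplus_{X^\circ} \W(\C)$ coincides with the ordinary product $\prod_{X^\circ} \W(\C)$. By \autoref{weak_additivity}, the factorization $\widetilde{\Pi}\colon \W(X\sma\C) \to \bigoplus_{X^\circ}\W(\C)$ is an equivalence of categories, and hence in this finite case $\Pi$ itself is an equivalence of categories into the finite product.

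The second step is to pass to spaces. An equivalence of categories induces a homotopy equivalence after applying the nerve and geometric realization. Moreover, the nerve commutes with finite products and geometric realization commutes with finite products in the category of compactly generated weak Hausdorff spaces, giving a natural homeomorphism
\[ \Bigl| N_{\sbt}\! \prod_{X^\circ} \W(\C) \Bigr| \cong \prod_{X^\circ} \bigl| N_{\sbt} \W(\C) \bigr|. \]
Composing these, the Segal map is a weak equivalence, as required. There is no real obstacle here: the corollary is a direct packaging of \autoref{weak_additivity} together with the standard facts that equivalences of categories realize to weak equivalences and that nerve and realization commute with finite products.
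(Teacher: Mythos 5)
Your proof is correct and follows the same approach as the paper: the paper's one-line proof simply invokes \autoref{weak_additivity} in the case that $X$ is a finite pointed set, and your argument is a careful unpacking of exactly that deduction (finite weak product equals ordinary product, equivalence of categories realizes to a weak equivalence, nerve and realization commute with finite products).
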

\begin{proof}
The specialness condition for this $\Gamma$-space is the case of \autoref{weak_additivity} in which $X$ is a finite pointed set.
\end{proof}

Lastly, we give a description of  $\pi_0$ of the spectrum $K(\C)$. The proof of the following proposition is identical to that of \cite[Theorem 2.13]{inna-assem}, which is the analogous result about assemblers. 

\begin{prop}
The group $K_0(\C)$ is the free abelian group generated by the objects $[A]$, modulo the relation $[A]=\sum_{j\in J}[A_j]$ for every covering family $\{A_j\to A\}_{j\in J}$.
\end{prop}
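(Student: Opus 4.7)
The plan is to leverage the standard consequence of the $\Gamma$-space machinery: $\pi_0$ of the spectrum associated to a special $\Gamma$-space equals the group completion of the abelian monoid $\pi_0$ of its value at $S^0$, with monoid operation induced by the fold map $S^0 \vee S^0 \to S^0$. Since the corollary immediately preceding this proposition shows that the $\Gamma$-space defining $K(\C)$ is special, this gives
\[ K_0(\C) \cong \bigl(\pi_0 |N_{\sbt}\W(\C)|\bigr)^{\textup{gp}}. \]

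Next, I would identify $\pi_0 |N_{\sbt}\W(\C)|$ explicitly. Its elements are represented by objects of $\W(\C)$, namely finite tuples $\{A_i\}_{i\in I}$ of non-basepoint objects of $\C$, and two tuples are identified when linked by a zigzag of morphisms in $\W(\C)$. The monoid operation comes from concatenation of tuples, and it becomes commutative in $\pi_0$ because any bijection of the indexing set yields an isomorphism in $\W(\C)$ (take the underlying map of sets to be the bijection and the covers to be identities). Thus $\pi_0 |N_{\sbt}\W(\C)|$ is a quotient of the free commutative monoid on $\ob \C^\circ$.

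To pin down the relations, I would observe that any morphism $\{A_i\}_{i\in I} \to \{B_j\}_{j\in J}$ in $\W(\C)$ decomposes, via its underlying map $f\colon I \to J$, as a disjoint union of covering families $\{A_i \to B_j\}_{i\in f^{-1}(j)}$ indexed by $j\in J$. Hence this morphism identifies $\sum_i [A_i]$ with $\sum_j [B_j]$ through an accumulation of covering-family relations $[B_j] = \sum_{i\in f^{-1}(j)}[A_i]$. Conversely, each covering family $\{A_j \to A\}_{j\in J}$ is itself a morphism in $\W(\C)$ from $\{A_j\}_{j\in J}$ to the singleton $\{A\}$, so the relation $[A] = \sum_j [A_j]$ genuinely holds in $\pi_0$. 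Group completion of the resulting monoid then yields the claimed presentation of $K_0(\C)$.

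The main obstacle is verifying that the equivalence relation on tuples generated by morphisms of $\W(\C)$ is \emph{exactly} the congruence generated by the covering-family relations, with no further identifications; this requires carefully unpacking the combinatorics of $\W(\C)$ as above, together with the fact that a zigzag of morphisms can always be re-read at the level of formal sums, since both the source and target of each morphism satisfy the same covering-family identity in the quotient monoid. This is the content of the proof of \cite[Theorem 2.13]{inna-assem}, and no new ideas beyond that argument are needed here.
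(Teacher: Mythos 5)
Your proposal is correct and takes the same approach as the paper: the paper does not give a separate argument but simply states that the proof is identical to that of \cite[Theorem 2.13]{inna-assem}, which is exactly the $\Gamma$-space group-completion argument you spell out (identify $\pi_0|N_{\sbt}\W(\C)|$ as the quotient monoid presented by the covering-family relations, then group complete). Your closing sentence, acknowledging that no ideas beyond that reference are needed, matches the paper's own treatment.
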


\section{Homotopy orbit categories}

We next turn to {\catcovs} that have a $G$-action and their homotopy orbits. Let $G$ be a discrete group. We denote also by $G$ the category with one object, morphism set $G$, and composition $g \circ h = gh$.

\begin{df}
  A \emph{$G$-\catcov} is a functor $G \to \Catfam$.
\end{df}

Expanding the definition, this means that if $\C\in \Catfam$ is the image of the
object of $G$, then the group $G$ acts on $\C$ on the left through functors of {\catcovs}. By the naturality of the $K$-theory construction, such an action
induces a left action of $G$ on the spectrum $K(\C)$.

\begin{ex}\label{no_moving}
  If we take the trivial group in \autoref{polytopeex}, we get a category with
 covering families $\Pol{X}{1}$ whose objects are polytopes $P \subseteq X$ and
  whose morphisms are inclusions $P \subseteq Q$ as subsets of $X$. If $G$ is
  any subgroup of the isometry group of $X$, then each $g \in G$ acts on
  $\Pol{X}{1}$, sending each polytope $P$ to $gP$. This gives a left $G$-action
  on $\Pol{X}{1}$ in $\Catfam$; hence the spectrum $K(\Pol{X}{1})$
  inherits a $G$-action.
\end{ex}

\begin{df}\label{htpyorbitass}  
  Given a $G$-{\catcov} $\C$, we define the \emph{homotopy orbit \catcov} $\C_{hG}$
  to be the Grothendieck construction $G \int \C$. 
  This has the same objects as $\C$, but morphisms incorporate the $G$-action as follows.  For objects $X_0$ and $X_1$ where $X_1$ is not the basepoint, a morphism $X_1 \to X_0$ in $\C_{hG}$ is a pair $(f,g)$ of an element $g \in G$ and a
  morphism $f\colon gX_1 \to X_0$ in $\C$. The composition is given by
  \[ (f_1,g_1) \circ (f_2,g_2) = (f_1 \circ g_1(f_2),\ g_1g_2). \]
  When $X_1 = *$, a morphism $* \to X_0$ in $\C_{hG}$ is defined to be simply a morphism $* \to X_0$ in $\C$.   Composition of a morphism $f_2\colon * \to X_1$ and a morphism $(f_1,g_1)\colon X_1 \to
  X_0$ is given by $f_1 \circ g_1(f_2)$. 
By defining morphisms out of $\ast$ in this way, we are essentially ignoring the action of $G$ on $\ast$, which was required to be trivial anyway.  This is the natural construction in a pointed context; compare with the homotopy orbits of a group action on a pointed space. 

  We declare that a collection of morphisms
  \[ \{ (f_i,g_i)\colon A_i \to B \}_{i \in I} \]
  is a covering family in $\C_{hG}$ if the morphisms
  \[ \{ f_i\colon g_iA_i \to B \}_{i \in I} \]
  are a covering family in $\C$.
  This gives $\C_{hG}$ the structure of a \catcov.
\end{df}

\begin{lem}
  The homotopy orbit category $\C_{hG}$ is a {\catcov}.
\end{lem}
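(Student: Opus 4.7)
The plan is to verify the three requirements of \autoref{family_structure} making $\C_{hG}$ a {\catcov}: the basepoint axioms (B) and (C), the condition that identity singletons are covers, and the condition that a cover of a cover composes to a cover. Most of these reduce almost immediately to the corresponding properties of $\C$, thanks to two facts baked into the definition of $\C_{hG}$: first, the $G$-action on $\C$ is by morphisms of {\catcovs}, so it fixes the basepoint and preserves covers; second, morphisms with source $*$ in $\C_{hG}$ are by convention exactly the morphisms with source $*$ in $\C$.

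For axiom (B), since $G$ fixes $*$ we have $\Hom_{\C_{hG}}(*,*) = \Hom_\C(*,*) = \{1_*\}$, and for $c \neq *$ a morphism $c \to *$ in $\C_{hG}$ is a pair $(f,g)$ with $f\colon gc \to *$ in $\C$, a set which is empty because $gc \neq *$. Axiom (C) and the identity-singleton condition are equally direct: in both cases the candidate covering family in $\C_{hG}$ coincides with a covering family in $\C$ (for axiom (C), a family $\{* \to *\}_{i \in I}$ is by convention already a family in $\C$; for identities, the family $\{(\id_A, e)\colon A \to A\}$ in $\C_{hG}$ corresponds to $\{\id_A\colon A \to A\}$ in $\C$, with $e \in G$ the identity element).

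The substantive step is composition of covers. Given a covering family $\{(g_j, h_j)\colon B_j \to C\}_{j \in J}$ in $\C_{hG}$ and, for each $j$, a covering family $\{(f_{ij}, k_{ij})\colon A_{ij} \to B_j\}_{i \in I_j}$, I would unpack the composite as $(g_j \circ h_j(f_{ij}),\ h_j k_{ij})$ and then verify that the associated family
\[ \{g_j \circ h_j(f_{ij})\colon h_j k_{ij} A_{ij} \to C\}_{j \in J,\ i \in I_j} \]
is a cover in $\C$. The hypothesis gives, for each $j$, a cover $\{f_{ij}\colon k_{ij} A_{ij} \to B_j\}_i$ in $\C$; applying $h_j$, which acts by a morphism of {\catcovs} and therefore preserves covers, yields a cover $\{h_j(f_{ij})\colon h_j k_{ij} A_{ij} \to h_j B_j\}_i$. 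Composing these covers, one per $j$, with the cover $\{g_j\colon h_j B_j \to C\}_{j}$ using the composition axiom for $\C$ yields precisely the family displayed above. The exceptional case $A_{ij} = *$ (where the morphism is simply an arrow $* \to B_j$ in $\C$) is handled in the same manner using $g* = *$. I do not anticipate any real obstacle; the proof is a bookkeeping check whose only load-bearing input is the $G$-equivariance of the cover structure on $\C$.
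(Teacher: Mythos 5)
Your proof is correct and follows essentially the same route as the paper's: the key step in both is unpacking the composite $(f_j, g_j)\circ(f_{ij}, g_{ij})$ and observing that $G$-equivariance of the covering structure on $\C$ plus closure under composition in $\C$ yield the required cover. You are slightly more thorough in explicitly verifying axioms (B), (C), and the identity-singleton condition, which the paper treats as immediate, but the substance is identical.
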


\begin{proof}
First note that $\{(1_A, 1)\colon A\xrightarrow{=} A\}$ is a cover since $\{1_A\colon A\xrightarrow{=} A\}$ is a cover in $\C$ by definition.
Given composable covering families
	\[ \{ (f_{ij},g_{ij})\colon A_{ij} \to B_i \}_{i \in I_j}, \quad \{(f_j,g_j)\colon B_j \to C \}_{j \in J} \]
	as in \autoref{family_structure}, the composite family $\{(f_j,g_j) \circ (f_{ij},g_{ij})\colon A_{ij} \to C \}_{j \in J, \ i \in I_j}$ can be rewritten as
	\[ \left\{(f_j \circ g_j(f_{ij}),g_jg_{ij})\colon A_{ij} \to C \right\}_{j \in J, \ i \in I_j}. \]
	For this to be a covering family in $\C_{hG}$, it suffices that
	\[ \{f_j \circ g_j(f_{ij})\colon g_jg_{ij}A_{ij} \to C \}_{j \in J, \ i \in I_j} \]
        is a covering family. This follows from the assumption that the action
        of each $g_j$ preserves covering families in $\C$, and that the covering
        families in $\C$ are preserved by composition.  An analogous argument
        works in the case of composition with morphisms from the basepoint, as
        well. 
\end{proof}

\begin{ex}\label{polytopeex2}
  Returning to \autoref{polytopeex}, it follows from the definitions that
  \[ (\Pol{X}{1})_{hG} \cong \Pol{X}{G}. \] In other words, the category of
  polytopes with action by $G$ is the homotopy orbits of the category of
  polytopes with no group action.
\end{ex}

\begin{ex}
  Let $G$ be a discrete group and let $1$ be the trivial group; let $G_*$ and $1_*$ be the corresponding {\catcovs} from \autoref{ex:groupascatwithoneobj}. By definition there is an isomorphism of \catcovs,
  \[ (1_*)_{hG} \cong G_*. \] As shown in \autoref{KtheoryBG}, $K(G_*)$ is equivalent to $\Sigma_+^\infty BG$. In particular,
  $K(1_*) \simeq \mathbb{S}$, and we have a commutation
  \[ (K(1_*))_{hG} \simeq K((1_*)_{hG}) = K(G_*). \]
  We can view this as a special case of \autoref{mainthm}.
\end{ex}

We point out that when $X$ is a pointed finite set (with no $G$-action), 
there are canonical isomorphisms
\[ X\sma (\C_{hG})\cong (X\sma \C)_{hG}. \]
As a result, the $\Gamma$-space that defines $K(\C_{hG})$ (see \autoref{ktheory})
is canonically isomorphic to the $\Gamma$-space
\[ X \mapsto \left| N_{\sbt}\W((X\sma\C)_{hG})\right|.\] 
Letting $F\colon G\Catfam\to \Spc_*$ be the functor $F(-)=\left|N_{\sbt}(\W(-)_{hG})\right|$, we deduce the following corollary.

\begin{cor}\label{twoGammaspaceversionofKforhtpyorbitsagree}
The spectrum $K_F(\C)$ associated to the functor $F(-)=|N_{\sbt}\W((-)_{hG})|$ is $K(\C_{hG})$.
\end{cor}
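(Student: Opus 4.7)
The plan is to observe that this corollary is essentially a direct consequence of the isomorphism $X \sma (\C_{hG}) \cong (X \sma \C)_{hG}$ highlighted just before the statement, combined with the definitions of $K(-)$ and $K_F(-)$. So the proof really amounts to tracing through definitions and verifying that the natural comparison on the nose identifies the two $\Gamma$-spaces whose associated spectra are being compared.

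First I would carefully justify the claimed isomorphism of {\catcovs} $X \sma (\C_{hG}) \cong (X \sma \C)_{hG}$ for a finite pointed set $X$, where $X$ is treated as a $G$-{\catcov} with trivial $G$-action. On objects both sides consist of pairs $(x,A)$ with $x \in X^\circ$ and $A \in \C^\circ$ together with the common basepoint. On morphisms, since the $G$-action on $X$ is trivial and morphisms in $X \sma \C$ are nontrivial only within a single copy of $\C$ (apart from maps from the basepoint), a morphism $(x,A) \to (x',A')$ in $(X \sma \C)_{hG}$ is automatically a pair $(f,g)$ with $x = x'$ and $f\colon gA \to A'$ in $\C$, which is exactly a morphism in the corresponding copy of $\C_{hG}$ inside $X \sma (\C_{hG})$. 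The covering family structures on the two sides then match by \autoref{htpyorbitass} and \autoref{wedgeex}, since each covering family on either side lives in a single copy of $\C_{hG}$ and is a covering family there.

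Next I would apply the functor $|N_{\sbt}\W(-)|$ to both sides to obtain a natural isomorphism of pointed spaces
\[ \left| N_{\sbt} \W(X \sma \C_{hG})\right| \;\cong\; \left| N_{\sbt} \W((X \sma \C)_{hG})\right| \;=\; F(X \sma \C), \]
natural in the finite pointed set $X$. By \autoref{ktheory}, the left-hand side is precisely the value at $X$ of the $\Gamma$-space defining $K(\C_{hG})$, while by \autoref{spectrification}, the right-hand side is the value at $X$ of the $\Gamma$-space $X_F$ whose associated spectrum is $K_F(\C)$. An isomorphism of $\Gamma$-spaces induces an isomorphism of the associated spectra at every level, and this isomorphism respects the bonding maps since both arise from the canonical identifications $S^1_{q_1} \sma \cdots \sma S^1_{q_k} \cong S^1_{q_1} \sma \cdots \sma S^1_{q_k} \sma S^1_1$ in finite pointed sets. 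Hence $K_F(\C) \cong K(\C_{hG})$, as claimed.

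There is no real obstacle here beyond bookkeeping. The one small subtlety worth checking, which I would mention explicitly, is that the homotopy orbit construction $(-)_{hG}$ on {\catcovs} commutes with the smash with a finite pointed set because the $G$-action on $X \sma \C$ is trivial in the $X$-coordinate and fixes the basepoint; this is precisely what makes the definition of morphisms from $*$ in \autoref{htpyorbitass} compatible with the wedge decomposition of \autoref{wedgeex}.
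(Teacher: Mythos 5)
Your proof is correct and follows the same route as the paper, which simply notes the canonical isomorphism $X \sma (\C_{hG}) \cong (X \sma \C)_{hG}$ for finite pointed $X$ and then compares the resulting $\Gamma$-spaces via \autoref{ktheory} and \autoref{spectrification}. You merely flesh out the verification of that isomorphism (objects, morphisms including those out of the basepoint, and covering families), which the paper leaves implicit.
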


This is an example of the following more general construction, analogous to \autoref{spectrification}.
\begin{df}\label{G_spectrification}
  For any functor $F\colon G\Catfam \to \Spc_*$, we define $X_F\colon G\Catfam \to \Gamma\Spc$ by assigning the $G$-{\catcov} $\C$ to
  \[X \longmapsto F(X \sma \C). \]
We let $K_F(\C)$ be the spectrum associated to this $\Gamma$-space.
\end{df}

As in the nonequivariant case, natural transformations between functors $G\Catfam\to \Spc_*$ induce maps of associated spectra.  In the next section, we produce the assembly map of \autoref{mainthm} by defining a natural transformation with target the functor $\left|N_{\sbt}\W((-)_{hG})\right|$.

\section{Commutation of $K$-theory with homotopy orbits}\label{sec:main}

In this section we introduce a double category $\W^{\square}(\C_{hG})$ that arises naturally from the category with covers $\W(\C_{hG})$  when $\C$ is a $G$-\catcov.  This double category serves as the  comparison point for models for the homotopy orbit spectrum $K(\C)_{hG}$ and the spectrum $K(\C_{hG})$.   Showing that these models are equivalent allows us to prove \autoref{mainthm}.

Let $\C$ be a $G$-\catcov. Let $\C_{hG}$ be the homotopy orbit category from
\autoref{htpyorbitass} and $\W(\C_{hG})$ its category of covers from
\autoref{category_of_covers}.

\begin{df} We need two important sub-categories of $\W(\C_{hG})$.
  Both contain all objects.  We say that a morphism $f\colon \{A_i\}_{i\in I} \to
  \{B_j\}_{j\in J}$ is a 
  \begin{description}
  \item[covering sub-map] if each component $(f_i,g_i)\colon A_i \to B_{f(i)}$
    has $g_i = 1$, and a
  \item[move] if each component $(f_i,g_i)\colon A_i \to B_{f(i)}$ has $f_i =
    1_{A_i}$.  
  \end{description}
  Thus a covering sub-map decomposes an object into smaller objects but does not
  ``move them around,'' while a move permutes the components of an object and
  moves them around, but does not decompose the object.

  We denote covering sub-maps by
$\csm$
 and moves by
$\movemap.$
  When labeling covering sub-maps and moves by their
  components we omit the coordinates that are required to equal an identity.
\end{df}

The terminology is chosen to be compatible with the terminology in
\cite[Definition 1.5]{innak1}.    
  \begin{df}\label{double_cat}
 Consider the double category $\W^{\square}(\C_{hG})$ with
  \begin{itemize}
  \item the same objects as $\W(\C_{hG})$,
  \item horizontal morphisms the moves,
  \item vertical morphisms the covering sub-maps,
  \item and 2-cells  commuting squares in  $\W(\C_{hG})$. 
  \end{itemize} 
   \end{df} 
   Observe that this double category separates the group action and the covering maps, in that the horizontal morphisms are purely given by the action of group elements and the vertical morphisms are morphisms in the category of covers $\W(\C)$.  This separation will allow us to relate the classical construction of the homotopy orbits to the nerve of $\W^\square(\C_{hG})$ in \autoref{equivalence_one}.

To prove \autoref{mainthm}, we first show that the nerve of the double category $\W^\square(\C_{hG})$ is equivalent to the nerve of $\W(\C_{hG})$, naturally in the $G$-{\catcov} $\C$.  Thus we can model the spectrum $K(\C_{hG})$ via the $\Gamma$-space associated to the functor $\left|N_{\sbt\,\sbt}\W^\square((-)_{hG})\right|$.  The following  technical observation about the role of
   covering sub-maps and moves inside $\W(\C_{hG})$ is essential in this comparison.

\begin{lem} \label{lem:factor}
  Every morphism in $\W(\C_{hG})$ factors uniquely (up to unique isomorphism) as
  a move followed by a covering sub-map.
\end{lem}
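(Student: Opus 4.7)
The plan is to exhibit the factorization by hand and then extract uniqueness from the composition rule in $\C_{hG}$.

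\textbf{Existence.} Let $\phi\colon\{A_i\}_{i\in I}\to\{B_j\}_{j\in J}$ be a morphism in $\W(\C_{hG})$, encoded by a map of finite sets $f\colon I\to J$ and components $(f_i,g_i)\colon A_i\to B_{f(i)}$, where $g_i\in G$ and $f_i\colon g_iA_i\to B_{f(i)}$ is a morphism in $\C$. I would take the intermediate object to be $M=\{g_iA_i\}_{i\in I}$, the move $m\colon\{A_i\}_{i\in I}\to M$ to have indexing $\id_I$ and $i$-th component $(1_{g_iA_i},g_i)$, and the covering sub-map $v\colon M\to\{B_j\}_{j\in J}$ to have indexing $f$ and $i$-th component $(f_i,1)$. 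The move $m$ is a legitimate morphism because each singleton $\{1_{g_iA_i}\}$ is a covering family in $\C$ by the identity axiom, and $v$ is legitimate because the required covers in $\C$ for $v$ are exactly the ones carried by $\phi$. Applying the composition rule $(f_1,g_1)\circ(f_2,g_2)=(f_1\circ g_1(f_2),g_1g_2)$ coordinatewise gives $(f_i,1)\circ(1_{g_iA_i},g_i)=(f_i,g_i)$, so $v\circ m=\phi$.

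\textbf{Uniqueness.} Suppose $\phi=v'\circ m'$, where $m'\colon\{A_i\}_{i\in I}\to\{C_k\}_{k\in K}$ is a move and $v'\colon\{C_k\}_{k\in K}\to\{B_j\}_{j\in J}$ is a covering sub-map. By definition, $m'$ is specified by an indexing $\alpha\colon I\to K$ together with group elements $h_i\in G$ satisfying $C_{\alpha(i)}=h_iA_i$, and $v'$ is specified by an indexing $\beta\colon K\to J$ together with $\C$-morphisms $e_k\colon C_k\to B_{\beta(k)}$ assembling into covers. Because the underlying $\C$-morphisms of $m'$ are identities, the requirement that $\{1_{h_iA_i}\}_{i\in\alpha^{-1}(k)}$ be a covering family in $\C$ forces $\alpha^{-1}(k)$ to be a singleton for each $k$, so $\alpha$ is a bijection. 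Computing $v'\circ m'$ coordinatewise yields $(e_{\alpha(i)},h_i)\colon A_i\to B_{\beta(\alpha(i))}$; matching with $\phi$ then forces $\beta\circ\alpha=f$, $h_i=g_i$, and $e_{\alpha(i)}=f_i$. In particular $C_{\alpha(i)}=g_iA_i$, and the bijection $\alpha$ together with the identity components defines the unique isomorphism $M\xrightarrow{\cong}\{C_k\}_{k\in K}$ in $\W(\C_{hG})$ intertwining $(m,v)$ with $(m',v')$.

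The main subtlety is the justification that the indexing map of a move must be a bijection; this relies on the fact that a covering family consisting solely of identity morphisms on a non-basepoint object in $\C$ is forced to be a singleton, which is the intended behaviour of the {\catcov} axioms. Granting this point, everything else is a direct application of the composition formula, and the statement follows.
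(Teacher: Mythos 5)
Your existence argument is the same as the paper's: factor each component $(f_i, g_i)$ as $(f_i, 1) \circ (1, g_i)$ and take $\{g_i A_i\}_{i \in I}$ as the intermediate object. For uniqueness you fill in considerably more detail than the paper's terse ``unique up to a permutation of the indexing set,'' which is welcome, but the step you yourself flag as the main subtlety is a genuine gap.

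You claim that the indexing map $\alpha$ of a move must be a bijection, and justify this by asserting that a covering family consisting solely of identity morphisms on a non-basepoint object is forced to be a singleton, calling this ``the intended behaviour of the {\catcov} axioms.'' Nothing in \autoref{family_structure} enforces this, and it fails for the {\catcov} $\E\!A_*$ of \autoref{ex:chaotic-group}, which is central to the paper. The non-basepoint object $0 \in A$ admits an empty cover, so $\alpha$ need not be surjective; and if $A$ has $2$-torsion, say $a \in A$ with $2a = 0$ and $a \neq 0$, then $\{1_a, 1_a, 1_a\}$ is a covering family of $a$ since $3a = a$, so $\alpha$ need not be injective. In such cases a morphism really does admit move-then-cover factorizations through non-isomorphic intermediate objects: whenever $\phi$ has every component of the form $(1,1)$ it is simultaneously a move and a covering sub-map, and $\phi = \phi \circ \mathrm{id}$ and $\phi = \mathrm{id} \circ \phi$ give two factorizations with intermediates of different cardinality. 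So isolating the hypothesis was the right instinct, but asserting it as a consequence of the axioms is where the argument breaks down. The paper's own proof does not address this point either — it asserts uniqueness without argument, and the claim ``all moves are isomorphisms'' in the proof of \autoref{lem:G-ind-bottom} presupposes exactly the property at issue — so you have surfaced a lacuna rather than filled it, but it remains a gap in your proof.
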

\begin{proof}
  Consider a morphism $\{A_i\}_{i\in I} \to \{B_j\}_{j\in J}$ with components
  $(f_i,g_i)\colon A_i \to B_{f(i)}$.  Each component map factors uniquely as
  $(f_i, 1) \circ (1,g_i)$:  here $(f_i,1)$ is the type of component that can be in a
  covering sub-map and $(1,g_i)$ is the type of component that can be in a
  move.  Thus $f$ factors as
\[  \{B_j\}_{j\in J}  \stackrel{f_i}{\csmrev} 
\{g_iA_i\}_{i\in I}    \stackrel{g_i}{\movemaprev} \{A_i\}_{i\in I},
\]
  and the factorization is unique up to a permutation of the indexing set.
\end{proof}

As a corollary we have the following observation about completing 2-cells in
$\W^\square(\C_{hG})$:    
   \begin{lem} \label{lem:G-ind-bottom}
   Each diagram in $\W^{\square}(\C_{hG})$ of the form
\[\begin{tikzcd}[column sep={2em}, row sep={2em},]
 & \bullet\ar[d,no head, covsubmap ]\\
\bullet & \ar[l,>->]\bullet
\end{tikzcd}
\qquad\qquad \text{or} \qquad\qquad
\begin{tikzcd}[column sep={2em}, row sep={2em}]
\bullet \ar[d, no head, covsubmap]\\
\bullet & \ar[l, >->]\bullet
\end{tikzcd}
\]
  completes uniquely (up to unique isomorphism) to a 2-cell  in $\W^{\square}(\C_{hG})$.
   \end{lem}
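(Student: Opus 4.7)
The plan is to deduce both cases from the factorization in \autoref{lem:factor}, exploiting the fact that every move is an isomorphism in $\W(\C_{hG})$. Concretely, a move has components of the form $(1, g_i)\colon A_i \to g_i A_i$, and each such component is invertible in $\C_{hG}$ via $(1, g_i^{-1})$; assembling these together with the inverse bijection of indexing sets produces a two-sided inverse move in $\W(\C_{hG})$.

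For the first diagram, I would label the four corners $A, B, A', B'$ in the natural way, so the given data consist of a cover $g'\colon B \to B'$ on the right and a move $f'\colon B' \to A'$ on the bottom. A completion to a 2-cell consists of an upper-left object $A$, a top move $f\colon B \to A$, and a left cover $g\colon A \to A'$ satisfying $g\circ f = f'\circ g'$. Since $f'\circ g'\colon B \to A'$ is a cover followed by a move, \autoref{lem:factor} produces its unique factorization (up to unique isomorphism) as a move followed by a cover, and this is precisely the desired 2-cell; uniqueness of the completion is inherited from uniqueness of the factorization.

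For the second diagram, the given data are a cover $g\colon A \to A'$ on the left and a move $f'\colon B' \to A'$ on the bottom, and a completion requires an upper-right object $B$, a top move $f\colon B \to A$, and a right cover $g'\colon B \to B'$ satisfying $g\circ f = f'\circ g'$. Using the invertibility of $f'$, I would rephrase this relation as $(f')^{-1}\circ g = g'\circ f^{-1}$, then apply \autoref{lem:factor} to the composite $(f')^{-1}\circ g\colon A \to B'$ to obtain a unique factorization $A \xrightarrow{h} B \xrightarrow{g'} B'$ with $h$ a move and $g'$ a cover. Setting $f := h^{-1}$ then yields the desired 2-cell, with uniqueness inherited once more from \autoref{lem:factor}.

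The only substantive input is the invertibility of moves; beyond that, the argument is essentially bookkeeping around \autoref{lem:factor}. I do not expect any serious obstacle, though some small care is needed to verify that the canonical isomorphisms produced by the factorization lemma (which amount to permutations of indexing sets) match the notion of ``unique isomorphism'' intended for 2-cells in the double category $\W^{\square}(\C_{hG})$.
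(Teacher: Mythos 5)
Your proof is correct and takes essentially the same approach as the paper: the first square is completed directly by applying \autoref{lem:factor} to the composite, and the second is reduced to the first by inverting the bottom move. Your concluding caution about the indexing-set permutations is apt but benign, since \autoref{lem:factor}'s "unique isomorphism" is exactly the kind of identity-2-cell-conjugating isomorphism that the double-categorical statement is asserting uniqueness up to.
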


   \begin{proof}
     We focus on the diagram on the left; the one on the right follows
     immediately from the observation that all moves are isomorphisms.
     Considering the diagram as a composition in $\W(\C_{hG})$, we see from
     Lemma~\ref{lem:factor} that we can factor the composite arrow uniquely as a
     move followed by a covering sub-map.  Since this factorization is unique
     (up to unique isomorphism) there is a unique 2-cell that completes it.
   \end{proof}
   
   \begin{rmk}
     We emphasize that \autoref{lem:factor} does not hold for factoring a
     morphism as a covering sub-map followed by a move, as it is possible that
     we move two ``pieces'' of an object by different elements of $G$, which we
     cannot do after they are ``glued together.''  Thus
     \autoref{lem:G-ind-bottom} does not hold for the analogous two squares with
     one of the given horizontal arrows at the top instead of the bottom. 
   \end{rmk}

\begin{notn}
  In the remainder of the paper we will have many morphisms whose names have indices and whose
  components we also want to index.  In order to avoid visual clutter, we
  will write, for example, $f_{2|i}$ instead of $(f_2)_i$ for the $i$-th
  component of the map $f_2$.
\end{notn}

 Using \autoref{lem:G-ind-bottom} we can show that $|N_{\sbt\,\sbt}\W^\square(\C_{hG})|$
   is a double-categorical model for $|N_{\sbt} \W(\C_{hG})|$. The double nerve $N_{\sbt\, \sbt\, }\W^{\square}(\C_{hG})$ is the
   bisimplicial set where a $(p,q)$-simplex is a $p\times q$ array of
   2-cells. We write these arrays as follows, with the horizontal morphisms
   pointing from right to left.   
   \begin{equation}\label{altgrid} 
\begin{tikzcd}[row sep=2.5em, column sep=2.5em]
     \{ g_{1|i} \cdots g_{q|i} P_i \}_{i \in I_p} \ar[d,"g_{1|i} \cdots g_{q|i}(f_p)",covsubmap] & \cdots  \ar[l,movemapstyle, "{g_{1|i}}"'] &
      \{ g_{q|i} P_i \}_{i \in I_p} \ar[d,"{g_{q|i}(f_p)}", covsubmap] \ar[l,"{g_{2|i}}"', movemapstyle]
       &
      \{ P_i \}_{i \in I_p} \ar[l, movemapstyle,"{g_{q|i}}"'] \ar[d,covsubmap,"{f_p}"]
      \\
      \vdots \ar[d, covsubmap, "{g_{1|i} \cdots g_{q|i}(f_2)}"] & &
      \vdots \ar[d,covsubmap, "{g_{q|i}(f_2)}"] &
      \vdots \ar[d, covsubmap, "{f_2}"]
      \\      
      \{ g_{1|i} \cdots g_{q|i} P_i \}_{i \in I_1} \ar[d,covsubmap,"{g_{1|i} \cdots g_{q|i}(f_1)}"] &
      \cdots       \ar[l, movemapstyle, "{g_{1|i}}"']  &
      \{ g_{q|i}P_i \}_{i \in I_1}  \ar[l,movemapstyle,"{g_{2|i}}"']
      \ar[d, covsubmap,"{g_{q|i}(f_1)}"]&
      \{ P_i \}_{i \in I_1} \ar[l, movemapstyle,"{g_{q|i}}"'] \ar[d, covsubmap,"{f_1}"]
      \\
      \{ g_{1|i}\cdots g_{q|i}P_i \}_{i \in I_0} &
      \cdots  \ar[l, movemapstyle, "{g_{1|i}}"'] &
      \{ g_{q|i}P_i \}_{i \in I_0} \ar[l, movemapstyle,"{g_{2|i}}"'] &
      \{ P_i \}_{i \in I_0} \ar[l,movemapstyle,"{g_{q|i}}"']
\end{tikzcd}
\end{equation}

   Note that in the $k$th row of horizontal morphisms, the indexing set $I_k$ is constant and the group elements  $g_{\ell|i}$ in this row are indexed  by $i \in I_k$. Note also that we have an agreement of group elements $g_{\ell|i} = g_{\ell|j}$ any time $j = f_k(i)$, since the squares commute in $\W(\C_{hG})$. As a result, the group elements in this diagram are determined by the ones lying on the bottom row.

 \begin{prop}\label{equivalence_three}
  There is a weak
  equivalence of spaces 
  \[|N_{\sbt\, \sbt\, }\W^{\square}(\C_{hG})| \longrightarrow |N_{\sbt} \W(\C_{hG})|\]
that is natural in the $G$-{\catcov} $\C$.
\end{prop}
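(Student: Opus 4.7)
The plan is to leverage the orthogonal-factorization-system-like structure on $\W(\C_{hG})$ given by the two technical lemmas preceding the statement: \autoref{lem:factor} (unique move-then-covering factorization, up to unique isomorphism) and \autoref{lem:G-ind-bottom} (unique extension of certain L-shaped diagrams to commuting squares). Together these imply that the double category $\W^\square(\C_{hG})$ is a canonical ``resolution'' of $\W(\C_{hG})$ that separates the group-action data from the covering data, making plausible the equivalence of their homotopy types.

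For the construction of the map, I would use the fact that $|N_{\sbt,\sbt}Y| \cong |\mathrm{diag}(Y)|$ for any bisimplicial set $Y$ to reduce to producing a simplicial map
\[ \mathrm{diag}\,N_{\sbt,\sbt}\W^\square(\C_{hG}) \longrightarrow N_\sbt \W(\C_{hG}). \]
An $n$-simplex on the left is an $n\times n$ grid of commuting 2-cells in $\W^\square(\C_{hG})$; I would send it to the $n$-simplex on the right whose vertices are the diagonal entries of the grid and whose edges are the composites between consecutive diagonal positions, which are well-defined and independent of the chosen path through the grid because the 2-cells commute in $\W(\C_{hG})$. This assignment respects the simplicial face and degeneracy operations and is manifestly natural in $\C$.

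To verify that this map induces a weak equivalence of realizations, I would apply Quillen's Theorem A. Given an $n$-simplex $\tau = (X_n \to \cdots \to X_0)$ in $N_\sbt\W(\C_{hG})$, one identifies the fiber over $\tau$ as follows: \autoref{lem:factor} factors each morphism of $\tau$ uniquely as a move followed by a covering sub-map, which pins down the entries on the grid diagonal; then \autoref{lem:G-ind-bottom}, applied iteratively outward from the diagonal, fills in the off-diagonal entries uniquely. The essential uniqueness of both the factorization and the completions shows the fiber is a contractible groupoid, and Theorem A then yields the weak equivalence.

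The main technical obstacle will be coherently executing the iterative filling procedure. \autoref{lem:G-ind-bottom} only completes L-shapes whose move sits along the bottom, so one must carefully order the iteration -- for instance, filling in squares from the bottom-right corner outward toward the top-left -- to ensure that each application has its input in the correct orientation. The uniqueness clauses of \autoref{lem:factor} and \autoref{lem:G-ind-bottom} then guarantee that the resulting filled grid is canonical up to unique isomorphism, from which the contractibility of the fiber, and hence the desired weak equivalence, follows.
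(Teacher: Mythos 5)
Your construction of the map is correct and coincides with the paper's ``simplicial assembly map'': traverse the $n\times n$ grid along its diagonal. But your proposed verification via Quillen's Theorem A does not go through as stated. The critical obstacle is that $\diag N_{\sbt\,\sbt}\W^\square(\C_{hG})$ is not (evidently) the nerve of a category, so Theorem A in its standard form --- which compares the nerves of two categories via a functor whose comma categories are contractible --- does not apply. Indeed, an $n$-simplex of this diagonal is an $n\times n$ array of 2-cells, and there is no obvious composition law making the 1-simplices (single 2-cells) into morphisms of a category; that would require composing 2-cells ``corner-to-corner,'' which is exactly a square-filling operation that is only available in one orientation. The remark following \autoref{lem:G-ind-bottom} in the paper makes this asymmetry explicit: L-shapes with the move on top do \emph{not} fill. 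Your ``fibers over a simplex $\tau$'' language is also not a standard form of Theorem A; the theorem considers comma categories over objects, and generalizations to simplicial sets require hypotheses you have not set up.

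The paper circumvents this issue entirely by interposing a second double category $\W^{\boxbox}(\C_{hG})$, whose vertical morphisms are arbitrary maps of $\W(\C_{hG})$. The argument then has two legs: (i) the inclusion $N_{\sbt\,\sbt}\W^\square \hookrightarrow N_{\sbt\,\sbt}\W^{\boxbox}$ is a weak equivalence by the realization lemma, reduced to showing that for each fixed $n$ the category inclusion $\W^\square_n \subseteq \W^{\boxbox}_n$ is an equivalence of categories --- and it is here that your intuition about \autoref{lem:factor} and \autoref{lem:G-ind-bottom} is deployed correctly, as the ingredients for essential surjectivity; (ii) the diagonal traversal map out of $N_{\sbt\,\sbt}\W^{\boxbox}$ is a left inverse to the ``all-identities'' inclusion $N_{\sbt}\W(\C_{hG}) \hookrightarrow N_{\sbt\,\sbt}\W^{\boxbox}$, which is an equivalence by Waldhausen's swallowing lemma. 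So the right-hand factor is handled by an existing lemma, not by Theorem A. If you want to salvage your approach, the natural fix is to use the realization lemma with a level-by-level category comparison --- which is precisely what the paper does --- rather than trying to apply Theorem A to a simplicial set that is not a nerve. Note also that the paper's final remark (about needing to subdivide before an explicit inverse can be written) is a warning sign that any naive ``canonical filling'' of the grid will not assemble simplicially, confirming that the diagonal source should not be expected to behave like a nerve.
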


\begin{proof}
Let $\W^{\boxbox}(\C_{hG})$ be the double category defined just as in \autoref{double_cat}, except the vertical maps are allowed to be any map in $\W(\C_{hG})$. We claim that we have a composite of homotopy equivalences after geometric realization
\[ N_{\sbt\, \sbt\, }\W^{\square}(\C_{hG})  \xrightarrow{\ \fbox{\small{1}}\ }    N_{\sbt\, \sbt\, }\W^{\boxbox}(\C_{hG}) \xrightarrow{\ \fbox{\small{2}}\ } N_{\sbt} \W(\C_{hG}). \]
Here the map $\fbox{\small{1}}$ is the inclusion of bisimplicial sets. The map $\fbox{\small{2}}$ should be viewed as a map of simplicial sets where the source is the diagonal of the bisimplicial set $N_{\sbt\,\sbt}\W^{\boxbox}(\C_{hG})$. This map is defined at level $q$ by sending a $q\times q$ array of $2$-cells to the $q$ maps in $\W(\C_{hG})$ given by looking at the boundaries of the $q$ 2-cells along the diagonal of the array.
 This map $\fbox{\small{2}}$ is  simplicial, and is a left inverse to the map
\[ N_{\sbt\, \sbt\, }\W^{\boxbox}(\C_{hG})  \xleftarrow{\ \sim\ } N_{\sbt\, } \W(\C_{hG}) \]
that sets all horizontal maps to be the identity. This latter map is an equivalence by Waldhausen's swallowing lemma \cite[Lemma 1.6.5]{1126}, hence its left inverse $\fbox{\small{2}}$ is an equivalence as well.

By the realization lemma, it is enough to show that the map $\fbox{\small{1}}$  of bisimplicial sets is an equivalence after fixing a direction. Let $\W_n^{\square}(\C_{hG})$ and  $\W_n^{\boxbox}(\C_{hG})$ be the categories whose objects are $n$-composites of vertical maps in $\W^{\square}(\C_{hG})$ and in $\W^{\boxbox}(\C_{hG})$, respectively, and whose morphisms are $n\times 1$ arrays of composable $2$-cells in the respective double categories. It suffices to show that for fixed $n$, the inclusion
\begin{equation}\label{eq:inclusion_one_level}
	\W_n^{\square}(\C_{hG}) \subseteq \W_n^{\boxbox}(\C_{hG})
\end{equation}
is an equivalence of categories. It is easy to see it is fully faithful---in both categories, the maps are defined by moves which make all of the resulting squares commute in the ambient category $\W(\C_{hG})$.

To see that \eqref{eq:inclusion_one_level} is essentially surjective, use
\autoref{lem:factor} to factor any morphism
\[ \{Q_j\}_{j\in J}\xleftarrow{(f_i, g_i)} \{P_i\}_{i\in I} \]
in $\W(\C_{hG})$ as
\[ \{Q_j\}_{j\in J} \stackrel{f_i}{\csmrev} \{g_i P_i\}_{i\in I} 
   \stackrel{g_i}{\movemaprev} \{P_i\}_{i\in I} .\]
Given any $n$-composite of maps in $\W(\C_{hG})$, we factor them each in this way and use \autoref{lem:G-ind-bottom} to complete the squares, which creates an isomorphism to an $n$-composite all of whose maps are of the form $(f_i, \id)$. For example, for $n=2$, essential surjectivity follows from the following diagram, where the dashed arrows are determined as in \autoref{lem:G-ind-bottom}.
\[\begin{tikzcd}
\{g_{1|i} g_{2|i} P_i \}_{i\in I_2} \ar[d,dashrightarrow,"{g_{1|i}(f_{2|i})}"'] &
\{g_{2|i} P_i \}_{i\in I_2} \ar[l,dashrightarrow, "{g_{1|i}}"'] \ar[d,covsubmap,"{f_{2|i}}"'] &
\{P_i\}_{i \in I_2} \ar[l,movemapstyle,"{g_{2|i}}"'] \ar[dl, rightarrow,"{(f_{2|i},g_{2|i})}"]
\\
\{g_{1|i} P_i\}_{i\in I_1} \ar[d, covsubmap,"{f_{1|i}}"'] &
\{P_i\}_{i\in I_1} \ar[dl,rightarrow,"{(f_{1|i}, g_{1|i})}"]  \ar[l,movemapstyle,"{g_{1|i}}"'] &
\\
\{P_i\}_{i\in I_0}
\end{tikzcd}\]
Viewing the composite along the diagonal as an object in $\W^\boxbox_2(\C_{hG})$, the horizontal maps provide an isomorphism to the object of $\W^\square_2(\C_{hG})$ given by the vertical composite on the left. This shows essential surjectivity.
\end{proof}

Recall that for a space or spectrum $E$ with an action of $G$, the homotopy orbits $E_{hG}$ can be defined as the realization of the simplicial object
\[ N_{\sbt}(S^0,G_+,E), \]
where the $q$th simplicial level is $(G)^{\sma q}_+\sma E = (G^q)_+ \sma E$. Taking $E$ to be $|N_{\sbt} \W(\C)|$, we get a simplicial space modeling the homotopy orbits $|N_{\sbt} \W(\C)|_{hG}$, which is the realization of a bisimplicial set $N_{\sbt}(S^0,G_+,N_{\sbt} \W(\C))$.  

Sending $\C$ to $\left|N_{\sbt} \W(\C)\right|_{hG}$  produces a functor $F\colon G\Catfam\to \Spc_*$, which,  as in \autoref{spectrification},  induces an associated spectrum.
 This is the standard model for the homotopy orbit spectrum $K(\C)_{hG}$.  We now turn to comparing this standard model with the double nerve $N_{\sbt\,\sbt}(\W^\square(\C_{hG}))$.

In proving the equivalence between $N_{\sbt\,\sbt}\W^\square(\C_{hG})$ and $N_{\sbt}\W(\C_{hG})$ of \autoref{equivalence_three}, we looked at the ``vertical'' slices  of the double category $\W^\square(\C_{hG})$.  In order to compare $N_{\sbt\,\sbt}\W(\C_{hG})$ with the usual construction of homotopy orbits, it is useful to consider the ``horizontal'' slices.

\begin{df}\label{horizontal_slices}
	For each $q \geq 0$, let $\W^q(\C_{hG})$
	denote the category whose objects are chains of $q$ composable horizontal morphisms in $\W^{\square}(\C_{hG})$, and whose morphisms are $1 \times q$ arrays of 2-cells.  
\end{df} 

\begin{lem}\label{same_as_old_paper}
	For each $q \geq 0$ there is a natural isomorphism of categories 
	\[ \W^q(\C_{hG}) \cong \W((G^q)_+ \sma \C), \]
where $G^q = G^{\times q}$ is the product of $q$ copies of $G$.
\end{lem}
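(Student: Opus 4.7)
The plan is to construct an explicit isomorphism by identifying both categories with the same concrete combinatorial data.

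\emph{Objects.} A chain of $q$ composable horizontal morphisms (moves) in $\W^\square(\C_{hG})$ starting at $\{P_i\}_{i \in I}$ is specified, up to the tuple $\{P_i\}_{i\in I}$ itself, by a $q$-tuple of group elements $\mathbf{g}^{(i)} = (g_{1|i},\ldots,g_{q|i}) \in G^q$ for each index $i\in I$. This is because each move has components of the form $(1_{A_i}, g_i)$ and hence is determined, up to relabeling of indexing sets, by the tuple of group elements it applies. On the other hand, by \autoref{wedgeex} the non-basepoint objects of $(G^q)_+ \sma \C$ are exactly the pairs $(\mathbf{g}, P)\in G^q \times \ob \C^\circ$. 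Thus both $\W^q(\C_{hG})$ and $\W((G^q)_+ \sma \C)$ have objects given by finite tuples $\{(\mathbf{g}^{(i)}, P_i)\}_{i\in I}$, and the empty tuple supplies the basepoint on both sides (see \autoref{basepoint_issue}).

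\emph{Morphisms.} A morphism in $\W^q(\C_{hG})$ is a $1\times q$ array of commuting $2$-cells: two chains of moves as above joined by a single vertical covering sub-map so that each of the $q$ squares commutes in $\W(\C_{hG})$. Using the composition rule $(f_1,g_1)\circ(f_2,g_2) = (f_1\circ g_1(f_2),\ g_1g_2)$ of \autoref{htpyorbitass}, commutativity of each square at component $i$ reduces to two conditions: the group labels must agree along the underlying function $f\colon I\to J$ (that is, $\mathbf{g}^{(i)} = \mathbf{g}^{(f(i))}$), and the vertical map in each higher row is the $G$-translate of the one at the bottom, contributing no new data. Hence a morphism is specified by $f\colon I\to J$ plus a $\C$-covering family $\{P_i \to Q_{f(i)}\}_{i \in f^{-1}(j)}$ for each $j\in J$, subject to matching group labels. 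On the other side, a morphism in $\W((G^q)_+ \sma \C)$ is a function $f\colon I\to J$ together with covering families in $(G^q)_+ \sma \C$, and the covering family structure of \autoref{wedgeex} forces all sources covering a given target to share its group-tuple label---exactly the same constraint.

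\emph{Functoriality and naturality.} I then define $\Phi\colon \W^q(\C_{hG}) \to \W((G^q)_+ \sma \C)$ as the identity on this common data. Composition on each side is composition of set maps $I\to J$ together with composition of covering families in $\C$, so $\Phi$ is a well-defined functor and is a morphism of $\pCat$; an inverse is described identically. Naturality in $\C\in G\Catfam$ is immediate, since a morphism $F\colon \C\to\D$ acts on both categories by applying $F$ pointwise to each $P_i$ while leaving the group labels untouched. The only real bookkeeping step, and the main potential obstacle, is verifying the commutativity constraint in a $1\times q$ array component by component; this is essentially the remark immediately after \autoref{altgrid} and follows directly from \autoref{lem:G-ind-bottom} together with the factorization in \autoref{lem:factor}.
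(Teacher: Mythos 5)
Your proof is correct and takes essentially the same approach as the paper: identify objects on both sides with indexed tuples $\{(\mathbf{g}^{(i)},P_i)\}_{i\in I}$, and observe that morphisms on both sides are covers in $\C$ that preserve the group-tuple labels, yielding the isomorphism directly. One small remark: the commutativity constraint in the $1\times q$ array of $2$-cells unwinds by a direct computation with the composition rule of \autoref{htpyorbitass} (as you in fact carry out), so appealing to \autoref{lem:G-ind-bottom} and \autoref{lem:factor} there is not really necessary---the paper's proof does not invoke them either.
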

In analyzing the right hand side, it is useful to recall that the category $(G^q)_+\sma \C$ is the wedge $\bigvee_{G^q} \C$, as mentioned in \autoref{wedgeex}.

\begin{proof}
For fixed $q$, objects of $\W((G^q)_+ \sma \C)$ are given by a choice of finite set $I$ and an $I$-tuple $\{ (g_{1|i},\dots,g_{q|i};P_i)\}_{i \in I}$ where each $P_i$ is an object of $\C$ and the element $(g_{1|i},\dots, g_{q|i})\in G^q$ indexes which ``copy'' of $\C$ the object $P_i$ lives in.  
Consider a morphism
\[ \{ (g_{1|i},\dots, g_{q|i};P_i) \}_{i\in I}\to \{(g_{1|j}, \dots, g_{q|j};P_j)\}_{j\in J}
\]
with associated map of sets $f\colon I\to J$. 
Because there are no morphisms in $(G^q)_+ \sma \C$ between non-basepoint objects living in different copies of $\C$, we can only have such a morphism if for each $i$,  $(g_{1|i},\dots,g_{q|i})=(g_{1|f(i)},\dots, g_{q|f(i)})$. In this case, for each $j\in J$, the maps $f_i$ simply make $\{P_i\}_{i\in f^\inv(j)}$ into a covering family of $P_j$.  

Now consider the objects of $\W^q(\C_{hG})$. For each $q$-tuple of horizontal
morphisms with indexing set $I$, the tuple of moves
    \[ \begin{tikzcd}
      \{ g_{1|i}\cdots g_{q|i}P_i \}_{i \in I} &
      \{ g_{2|i}\cdots g_{q|i}P_i \}_{i \in I} \ar[l,movemapstyle,"{g_{1|i}}"'] &
      \cdots \ar[l,movemapstyle,"{g_{2|i}}"'] &
      \{ g_{q|i}P_i \}_{i \in I} \ar[l,movemapstyle,"{g_{(q-1)|i}}"'] &
      \{ P_i \}_{i \in I} \ar[l,movemapstyle,"{g_{q|i}}"']
    \end{tikzcd}
  \]
     is determined by the object $\{ P_i \}_{i \in I}$, and the tuple of elements $\{(g_{1|i},\dots,g_{q|i})\}_{i \in I}$. Therefore the two categories in the statement have the same objects up to bijection. They have the same morphisms as well: in both cases the morphisms are covers that preserve the associated $q$-tuple of elements of $G$.
\end{proof}

These horizontal slices fit together to form $N_{\sbt\,\sbt}\W^\square(\C_{hG})$ in the following way.  As $q$ varies, the categories $\W^q(\C_{hG})$  form a simplicial category $\W^{\sbtup}(\C_{hG})$. The degeneracy maps are given by adding identity horizontal morphisms and identity 2-cells. The $0$th face map drops the moves $g_{1|i}$ and the last face map drops the moves $g_{q|i}$.  
The remaining face maps are given by composing horizontal morphisms.  The bisimplicial set $N_{\sbt\,\sbt}\W^\square(\C_{hG})$ is the levelwise nerve of the simplicial category $\W^{\sbtup}(\C_{hG})$.

The simplicial category structure on $\W^{\sbtup}(\C_{hG})$ becomes the following simplicial category structure on $\W((G^{\sbtup})_+ \sma \C)$ when we transport it along the isomorphism of \autoref{same_as_old_paper}.  On objects, the face maps are given by 
\begin{equation*}
d_k\bigl(\{(g_{1|i},\dots,g_{q|i};P_i)\}_{i\in I}\bigr)
=\begin{cases}
\{(g_{2|i},\dots,g_{q|i}; P_i)\}_{i\in I} & \text{if $k=0$}\\
\{(g_{1|i},\dots, g_{k|i}g_{k+1|i}, \dots, g_{q|i};  P_i)\}_{i\in I} & \text{if $0< k <q$}\\
\{(g_{1|i},\dots,g_{q-1|i};  g_{q|i}P_i)\}_{i\in I} & \text{if $k=q$}.
\end{cases}
\end{equation*}
Essentially, these are the face maps of the two-sided bar construction in the $I$-tuples of indexing variables.  Note that the last face map incorporates the action of $G$ on $\C$. Since all morphisms preserve the indexing variables, these formulas extend to give the face maps on morphisms in a straightforward way.  To check that the last face map extends to morphisms, observe that the action of $G$ on $\C$ preserves the property of being a covering family.

On objects, the degeneracy maps are
\begin{equation*} s_k\bigl( \{ (g_{1|i},\dots,g_{q|i}; P_i) \}_{i\in I}\bigr)
  = \{ (g_{1|i},\dots,g_{k|i},1,g_{k+1|i},\dots,g_{q|i}; P_i) \}_{i\in I}
  \end{equation*}
where $1$ is the identity element of $G$. Again, this formula extends to morphisms in the obvious way. We see that the double nerve $N_{\sbt\, \sbt\, }\W^{\square}(\C_{hG})$ is isomorphic to the bisimplicial set that we get by taking the nerve $N_{\sbt} \W((G^{\sbtup})_+ \sma \C)$ of this simplicial category.

In the classical homotopy orbits construction, at the $q$th simplicial level we have only a single $q$-tuple of elements of $G$.  In contrast, the data of $\W^q(\C_{hG})$ involves $q$-tuples of elements of $G$ that can vary over the indexing sets $I$.  To compare the two constructions, we make use of the following sub-double category of $\W^{\square}(\C_{hG})$, in which the group elements are ``uniform'' in the indexing sets $I$.  

\begin{df}\label{uniform}
	Let $\U^{\square}(\C_{hG}) \subseteq \W^{\square}(\C_{hG})$ be the sub-double category in which the horizontal morphisms are restricted to those where the elements $g_i$  are the same for all $i$. In the double nerve, this implies that in each column of horizontal morphisms (as depicted in (\ref{altgrid})), every group element $g_{k|i}$ in that column is equal. For each $q \geq 0$, let $\U^q(\C_{hG})$ be the corresponding category of horizontal $q$-tuples, as in \autoref{horizontal_slices}.
\end{df}

\begin{prop}\label{equivalence_one}
	There is an isomorphism of bisimplicial sets
	\[ N_{\sbt}(S^0,G_+,N_{\sbt} \W(\C)) \cong N_{\sbt\, \sbt\, }\U^{\square}(\C_{hG}) \]
that is natural in the $G$-{\catcov} $\C$.
\end{prop}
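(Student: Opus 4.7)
The plan is to identify the $(p,q)$-bisimplices on the two sides and then verify compatibility with the face and degeneracy maps. First, I would combine \autoref{same_as_old_paper}, which gives $\W^q(\C_{hG}) \cong \W((G^q)_+ \sma \C)$, with the fact that $(G^q)_+ \sma \C = \bigvee_{G^q} \C$. An object of $\W((G^q)_+ \sma \C)$ is a tuple $\{(g_{1|i}, \ldots, g_{q|i}; P_i)\}_{i \in I}$, and the uniformity constraint defining $\U^q(\C_{hG}) \subseteq \W^q(\C_{hG})$ forces each $g_{k|i}$ to be independent of $i$. Moreover, the 2-cell commutativity condition in $\W(\C_{hG})$ forces $g_{k|i} = g_{k|f(i)}$ along the underlying map of finite sets $f \colon I \to J$, so morphisms in $\U^q(\C_{hG})$ must preserve the $q$-tuple of group elements. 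This decomposes $\U^q(\C_{hG})$ in $\pCat$ as a categorical wedge $(G^q)_+ \sma \W(\C)$, with one copy of $\W(\C)$ indexed by each element of $G^q$ and the empty-$I$ object serving as the shared basepoint.

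Next, I would take the nerve in the $p$-direction to obtain a bijection
\[ N_{p,q}\,\U^\square(\C_{hG}) \cong (G^q)_+ \sma N_p\W(\C) = N_q(S^0, G_+, N_p \W(\C)) \]
at each bidegree, and verify it is compatible with all face and degeneracy maps. The $p$-direction is straightforward, since both sides inherit their simplicial structure from the nerve of the same category $\W(\C)$ once a $q$-tuple $(g_1,\ldots,g_q) \in G^q$ is fixed.

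Finally, in the $q$-direction I would invoke the explicit formulas displayed in the text preceding \autoref{uniform}. The face maps $d_0, \ldots, d_{q-1}$ and the degeneracies act only on the $G^q$-factor by composing, dropping, or inserting group elements, which matches the two-sided bar construction. The essential check is for $d_q$: the formula $d_q\{(g_1, \ldots, g_q; P_i)\}_{i \in I} = \{(g_1, \ldots, g_{q-1}; g_q P_i)\}_{i \in I}$ translates, under the identification above, to applying the endofunctor $\W(g_q) \colon \W(\C) \to \W(\C)$ to the entire chain in $\W(\C)$, which is precisely how $g_q$ acts on $E = N_p \W(\C)$ in the bar construction $N_q(S^0, G_+, E)$. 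Naturality in the $G$-{\catcov} $\C$ is automatic since every construction in sight is functorial in $\C$. The main obstacle I anticipate is purely bookkeeping: carefully tracking that the shared empty-$I$ basepoint of $\U^q(\C_{hG})$ corresponds to the single wedge point of $(G^q)_+ \sma N_p\W(\C)$, and that morphisms between objects with distinct uniform $q$-tuples are genuinely disallowed in $\U^q(\C_{hG})$.
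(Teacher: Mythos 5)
Your argument is essentially the paper's proof: both restrict the isomorphism $\W^q(\C_{hG}) \cong \W((G^q)_+\sma\C)$ from Lemma~\ref{same_as_old_paper} to the uniform subcategory to obtain $\U^q(\C_{hG}) \cong (G^q)_+\sma\W(\C)$, identify this wedge with the $q$th level of the two-sided bar construction, and then observe that the $q$-direction face and degeneracy maps (the explicit formulas following Lemma~\ref{same_as_old_paper}) match the bar construction, with $d_q$ encoding the $G$-action. Your extra verifications of the face and degeneracy maps and of the pointed-wedge bookkeeping are correct and consistent with the paper's treatment.
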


\begin{proof}
By definition, $N_q(S^0,G_+,N_{\sbt}\W(\C))$, the $q$th level of the left hand side,  is the nerve of the category \[(G^q)_+\sma \W(\C)=\bigvee_{G^q}\W(C).\]
That is, $N_q(S^0,G_+,N_{\sbt}\W(\C))$ is the wedge of the pointed categories $\W(\C)$ indexed by $G^q$. Note this wedge makes sense because there are no morphisms in $\W(\C)$ from non-basepoint objects to the basepoint object.  This wedge is the full subcategory of $\W((G^q)_+\sma\C)$ where the choice of indexing group elements does not vary in the indexing set $I$.

	Thus, for each $q \geq 0$, the isomorphism of \autoref{same_as_old_paper} restricts to an isomorphism
	\[ \U^q(\C_{hG}) \cong (G^q)_+ \sma \W(\C). \]
 Along this isomorphism, the simplicial structure described after \autoref{same_as_old_paper} becomes exactly the simplicial structure for the two-sided bar construction for the action of $G$ on the pointed category $\W(\C)$. Taking nerves, this gives the two-sided bar construction on $N_{\sbt} \W(\C)$, as desired.
\end{proof}

\begin{prop}\label{equivalence_two}
	The inclusion of bisimplicial sets, natural in $\C$,
	\[ N_{\sbt\, \sbt\, }\U^{\square}(\C_{hG}) \to N_{\sbt\, \sbt\, }\W^{\square}(\C_{hG}) \]
	induces a stable equivalence on the spectra associated to the $\Gamma$-spaces of these functors as in \autoref{G_spectrification}.
\end{prop}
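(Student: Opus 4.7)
The plan is to filter by the horizontal simplicial direction, show that at each horizontal degree the induced map of spectra is a stable equivalence using the wedge-into-weak-product comparison of \autoref{wedge_to_weak_product}, and then assemble by realizing over the $q$-direction.

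Fix $q \geq 0$ and write $F_1^q(\C) = |N_\sbt \U^q(\C_{hG})|$ and $F_2^q(\C) = |N_\sbt \W^q(\C_{hG})|$. The identifications in \autoref{same_as_old_paper} and in the proof of \autoref{equivalence_one} give natural isomorphisms
\[ F_1^q(\C) \cong \bigvee_{G^q} |N_\sbt \W(\C)|, \qquad F_2^q(\C) \cong |N_\sbt \W((G^q)_+ \sma \C)|, \]
under which the inclusion $F_1^q \hookrightarrow F_2^q$ arises from the inclusion of subcategories $(G^q)_+ \sma \W(\C) \hookrightarrow \W((G^q)_+ \sma \C)$. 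The $\Gamma$-space $X \mapsto F_1^q(X \sma \C)$ is then a $G^q$-indexed wedge of the $\Gamma$-space for $K(\C)$, and so has associated spectrum $\bigvee_{G^q} K(\C)$. The $\Gamma$-space $X \mapsto F_2^q(X \sma \C) = |N_\sbt \W((G^q)_+ \sma X \sma \C)|$ is the $\Gamma$-space defining $K((G^q)_+ \sma \C)$; applying \autoref{weak_additivity} to the pointed set $(G^q)_+ \sma X$ and commuting the resulting weak product past nerve and realization via \autoref{weak_product_comm} (together with the commutation of weak products with geometric realization used in the proof of \autoref{wedge_to_weak_product}) identifies this spectrum level-by-level with $\bigoplus_{G^q} K(\C)$. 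Under these identifications, the induced map of spectra $K_{F_1^q}(\C) \to K_{F_2^q}(\C)$ is the natural inclusion $\bigvee_{G^q} K(\C) \hookrightarrow \bigoplus_{G^q} K(\C)$, which is a stable equivalence by \autoref{wedge_to_weak_product}.

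The bisimplicial sets $N_{\sbt\,\sbt} \U^\square(\C_{hG})$ and $N_{\sbt\,\sbt} \W^\square(\C_{hG})$ are geometric realizations in the $q$-direction of the simplicial spaces $q \mapsto F_i^q(\C)$, and the $\Gamma$-space and spectrum constructions commute with realization of a simplicial direction. Hence the full map $K_{F_1}(\C) \to K_{F_2}(\C)$ is the geometric realization of a levelwise stable equivalence of simplicial spectra, which is itself a stable equivalence (for instance by comparison of the spectral sequences computing the homotopy groups, or by verifying the relevant Reedy cofibrancy and applying Bousfield--Friedlander). The main technical obstacle is the identification of the spectrum of $X \mapsto F_2^q(X \sma \C)$ with $\bigoplus_{G^q} K(\C)$ compatibly with the inclusion from $\bigvee_{G^q} K(\C)$; this amounts to tracking the \autoref{weak_additivity} equivalence naturally under smash with $X$ and recognizing that the ``uniform'' subcategory $\U^q$ of \autoref{uniform} corresponds exactly to the wedge summands inside the weak product, i.e., to tuples in which at most one $G^q$-coordinate is non-basepoint.
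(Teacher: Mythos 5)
Your proof is correct and follows essentially the same route as the paper: fix the horizontal nerve degree $q$, use \autoref{same_as_old_paper} and the observation from \autoref{equivalence_one} to identify the uniform subcategory with a $G^q$-indexed wedge and the full category with a weak product, invoke \autoref{weak_additivity}, \autoref{weak_product_comm}, and \autoref{wedge_to_weak_product} to show the induced map of associated spectra is a stable equivalence for each $q$, and then realize over $q$. The paper presents the fixed-$q$ comparison as a commuting square and a two-out-of-three argument rather than chasing the identifications directly, but the lemmas used and the structure of the argument are the same.
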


\begin{proof}
	We work one level at a time in the horizontal nerve. Fixing $q$ and varying $k$, we need to show that the map along the top row of the following diagram produces a stable equivalence of spectra.
	\begin{equation}\label{compare_orbits_eq}
	\begin{tikzcd}[arrows=rightarrow] (G^q)_+ \sma \left|N_{\sbt} \mathcal W\left( S^k_{\sbt} \sma \C \right)\right|
	\ar[r,] \ar[d,"\sim"] &
	\left|N_{\sbt}\W\left( (G^q)_+ \sma S^k_{\sbt} \sma \C \right)\right|
	\ar[d,"\sim"] \\
	{\displaystyle\bigoplus_{G^q}} \left|N_{\sbt} \mathcal W\left( S^k_{\sbt} \sma \C \right)\right| \ar[r, "\cong"]
	& \Bigl|N_{\sbt} \Bigl({\displaystyle\bigoplus_{G^q}} \mathcal W\bigl( S^k_{\sbt} \sma \C \bigr)\Bigr)\Bigr|
      \end{tikzcd}
	\end{equation}
	Recall that the smashes with $G^q$ that appear in the top row of this diagram are constructed as wedge products in based simplicial sets and {\catcovs}, respectively:
\[ (G^q)_+ \sma \bigl| N_{\sbt}\W(S^k_{\sbt}\sma \C)\bigr|= \bigvee_{G^q} \bigl|N_{\sbt}\W(S^k_{\sbt}\sma \C)\bigr| \text{\quad and \quad} (G^q)_+ \sma S^k_{\sbt}\sma\C =\bigvee_{G^q}(S^k_{\sbt}\sma\C).\]
The vertical maps are both induced by the evident maps from the wedges to the products over $G^q$.  The right-hand vertical map lands in the weak product by \autoref{weak_additivity}. It is straightforward to see that the diagram commutes.
	
	The left vertical map is a stable equivalence by \autoref{wedge_to_weak_product}, and the right vertical map is a stable equivalence by \autoref{weak_additivity}. The bottom horizontal map is a homeomorphism by \autoref{weak_product_comm}. By the two-out-of-three property, the top horizontal map is therefore a stable equivalence as well. 
\end{proof}

These equivalences allow us to prove \autoref{mainthm}.

\begin{proof}[Proof of \autoref{mainthm}]
Combining Propositions \ref{equivalence_one}, \ref{equivalence_two}, and \ref{equivalence_three}, we get maps of functors $G\Catfam \to \Spc_*$
\[ \left|N_{\sbt}(S^0,G_+,N_{\sbt} \W(\C))\right| \stackrel{\cong}{\longrightarrow} \left|N_{\sbt\, \sbt\, } \U^\square(\C_{hG})\right| \longrightarrow \left|N_{\sbt\, \sbt\, } \W^\square(\C_{hG})\right| \stackrel{\sim}{\longrightarrow} \left|N_{\sbt} \W(\C_{hG})\right| \]
and thus induced maps on associated spectra, as in \autoref{G_spectrification}.  We refer to this composite as the \emph{assembly map}. The first and last maps are induced by equivalences of functors and are thus level equivalences of spectra.  The middle map induces a stable equivalence of spectra by \autoref{equivalence_two}.  Observe that by \autoref{twoGammaspaceversionofKforhtpyorbitsagree}, the $\Gamma$-space associated to the rightmost functor $G\Catfam\to \Spc_*$ produces the spectrum $K(\C_{hG})$. Thus the assembly map is a stable equivalence
\[ \asm\colon K(\C)_{hG} \xrightarrow{\simeq} K(\C_{hG}), \]
proving \autoref{mainthm}.
\end{proof}

We now obtain \autoref{polyhedrathrm} as a corollary, using the identification of $\Pol{X}{G}$ as $(\Pol{X}{1})_{hG}$ from \autoref{polytopeex2}.
\begin{cor}[Farrell--Jones isomorphism for $K$-theory of polyhedra]
 Let $X$ be any neat geometry (Euclidean, hyperbolic, or spherical), and let $G$ be any subgroup of the isometry group of $X$. The assembly map 
\[ K(\Pol{X}{1})_{hG}\to K(\Pol{X}{G})\]
 is an equivalence.
\end{cor}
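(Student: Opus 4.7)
The plan is to assemble this corollary directly from the two ingredients already established in the paper: \autoref{mainthm}, which gives the abstract commutation of $K$-theory with homotopy orbits for any $G$-{\catcov}, and \autoref{polytopeex2}, which identifies the specific homotopy orbit {\catcov} of interest. The corollary is essentially a rephrasing of \autoref{mainthm} in the special case relevant to polytopes.

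First, I would verify that $\Pol{X}{1}$ is a $G$-{\catcov} in the sense of the setup preceding \autoref{htpyorbitass}. This is exactly the content of \autoref{no_moving}: for any subgroup $G$ of the isometry group of $X$, each $g \in G$ sends a polytope $P \subseteq X$ to $gP$, and these assemblies respect inclusions and covering families (since isometries take polytope covers with measure-zero overlaps to polytope covers with measure-zero overlaps). Hence $G$ acts on $\Pol{X}{1}$ through morphisms of {\catcovs}, giving a functor $G \to \Catfam$.

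Next, I would apply \autoref{mainthm} to the $G$-{\catcov} $\C = \Pol{X}{1}$, which yields a stable equivalence of spectra
\[ \asm\colon K(\Pol{X}{1})_{hG} \xrightarrow{\ \sim\ } K\bigl((\Pol{X}{1})_{hG}\bigr). \]
Finally, by \autoref{polytopeex2}, the homotopy orbit {\catcov} $(\Pol{X}{1})_{hG}$ is canonically isomorphic to $\Pol{X}{G}$ as a {\catcov}: both have polytopes as objects, both have morphisms $P \to Q$ given by a pair $(g, f)$ of an element $g \in G$ and an inclusion $f\colon gP \subseteq Q$, and both declare covering families by the same measure-zero overlap condition. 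Functoriality of $K(-)$ then identifies $K((\Pol{X}{1})_{hG})$ with $K(\Pol{X}{G})$, and the assembly equivalence from \autoref{mainthm} becomes the desired map $K(\Pol{X}{1})_{hG} \to K(\Pol{X}{G})$.

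Since all the work has been done in the proofs of \autoref{mainthm} and of \autoref{polytopeex2}, there is no real obstacle here; the only thing to be careful about is confirming that the composite equivalence from the three propositions in \autoref{sec:main} really does agree with the map named $\asm$ in the statement of \autoref{polyhedrathrm}. This is a matter of unwinding the definitions and is immediate from the construction of the assembly map at the end of the proof of \autoref{mainthm}.
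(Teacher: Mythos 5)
Your proposal is correct and matches the paper's own argument: the paper derives this corollary by applying \autoref{mainthm} to the $G$-{\catcov} $\Pol{X}{1}$ and then invoking the isomorphism $(\Pol{X}{1})_{hG} \cong \Pol{X}{G}$ from \autoref{polytopeex2}. Your additional check that $\Pol{X}{1}$ is indeed a $G$-{\catcov} via \autoref{no_moving} is exactly the right thing to verify and is already implicit in the paper's setup.
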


\begin{rmk}
Since we use this formula later on, we specify explicitly the map
\[\diag N_{\sbt\, \sbt\, }\W^{\square}(\C_{hG}) \to N_{\sbt} \W(\C_{hG})\]
from \autoref{equivalence_three}, which we call the \emph{simplicial assembly
  map}. Note  that $N_{\sbt\, \sbt\, }\W^{\square}(\C_{hG})\cong N_{\sbt}
\W((G^{\sbtup})_+ \sma \C)$, as in \autoref{same_as_old_paper}. At simplicial level
$p$ an element of $\diag N_{\sbt\,\sbt}\W^{\square}(\C_{hG})$ is a $p\times p$
array as in  Diagram \ref{altgrid}. This data is equivalent to an $I_0$-indexed set of
$p$-tuples of elements in $G$ representing the bottom row of horizontal maps in
the array, and a composite in $\W(\C)$ representing the covering sub-maps along the right-hand side of the array:
\begin{equation*}\label{element1}
	\bigl( \{ (g_{1|i},\ldots,g_{p|i}) \}_{i \in I_0},\ \{ P_i\}_{i \in I_0} \stackrel{ f_{1|i}}{\csmrev} \{ P_i \}_{i \in I_1} \stackrel{f_{2|i}}{\csmrev} \dotsb    \stackrel{f_{p|i}}{\csmrev} \{ P_i \}_{i \in I_p}  \bigr)
\end{equation*}
Under the assembly map, this goes to the composite of morphisms in $\W(\C_{hG})$ that we get by traversing the associated array in $N_{\sbt\, \sbt\, }\W^{\square}(\C_{hG})$ diagonally, namely
\begin{equation}\label{elementinhtpyorbit}
\resizebox{\textwidth}{!}{$
	\{ g_{1|i}\cdots g_{p|i}P_i \}_{i \in I_0} \xleftarrow{(g_{1|i}\cdots g_{p|i}(f_{1|i}),\ g_{1|i})} \{ g_{2|i}\cdots g_{p|i}P_i \}_{i \in I_1} \xleftarrow{(g_{2|i}\cdots g_{p|i}(f_{2|i}),\ g_{2|i})} \dotsb 
\xleftarrow{(g_{p|i}(f_{p|i}),\ g_{p|i})} \{ P_i \}_{i \in I_p}.$
}
 \end{equation}
\end{rmk}

\begin{rmk}
The inverse equivalences of categories $\W_n^{\boxbox}(\C_{hG}) \to \W_n^{\square}(\C_{hG})$ from the proof of \autoref{equivalence_three} do \emph{not} assemble into a map of simplicial categories. Therefore we do not get a simplicial inverse to the map $\fbox{\small{1}}$ in the proof of \autoref{equivalence_three}. 
It is possible to define an explicit homotopy inverse to the map in \autoref{equivalence_three} at the simplicial level, but only if we first subdivide $N_{\sbt} \W(\C_{hG})$. More precisely, we can define an inverse simplicial homotopy 
\[\sd_2 N_{\sbt} \W(\C_{hG})\to \diag N_{\sbt\, \sbt\, }\W^{\square}(\C_{hG})\]
from the B{\"o}kstedt--Hsiang--Madsen subdivision defined in \cite{bokstedt_hsiang_madsen}, as we now sketch.
 
 Recall that  $\sd_2 N_n \W(\C_{hG}) = N_{2n+1} \W(\C_{hG})$. The inverse homotopy, schematically, looks as follows when $n=2$:
\begin{center}
  \begin{tikzpicture}
    \matrix (m) [matrix of math nodes, row sep=1.5em, column sep=1.5em]
    {
      \bullet & \bullet & \bullet & \bullet & \bullet & \bullet \\
      \bullet & \bullet &  \bullet & \bullet & \bullet & \\
      \bullet & \bullet & \bullet & \bullet & & \\
      \bullet & \bullet & \bullet & & & \\
      \bullet & \bullet & & & & \\
      \bullet & & & & & \\
    };
    \draw[>->, color=red!60!black] (m-1-6.mid west) --  (m-1-5.mid east);
    \draw[>->, color=red!60!black] (m-1-5.mid west) --  (m-1-4.mid east);
    \draw[>->, color=red!60!black] (m-1-4.mid west) --  (m-1-3.mid east);
    \draw[>->, color=red!60!black] (m-1-3.mid west) --  (m-1-2.mid east);
    \draw[>->, color=red!60!black] (m-1-2.mid west) --  (m-1-1.mid east);
    \draw[>->, color=red!60!black] (m-2-5.mid west) --  (m-2-4.mid east);
    \draw[>->, color=red!60!black] (m-2-4.mid west) --  (m-2-3.mid east);
    \draw[>->, color=red!60!black] (m-2-3.mid west) --  (m-2-2.mid east);
    \draw[>->, color=red!60!black] (m-2-2.mid west) --  (m-2-1.mid east);
    \draw[>->, color=red!60!black] (m-3-4.mid west) --  (m-3-3.mid east);
    \draw[>->, color=red!60!black] (m-3-3.mid west) --  (m-3-2.mid east);
    \draw[>->, color=red!60!black] (m-3-2.mid west) --  (m-3-1.mid east);
    \draw[>->, color=red!60!black] (m-4-3.mid west) --  (m-4-2.mid east);
    \draw[>->, color=red!60!black] (m-4-2.mid west) --  (m-4-1.mid east);
    \draw[>->, color=red!60!black] (m-5-2.mid west) --  (m-5-1.mid east);
    \draw[densely dotted, ->] (m-1-1) -- (m-2-1);
    \draw[densely dotted, ->] (m-2-1) -- (m-3-1);
    \draw[densely dotted, ->] (m-3-1) -- (m-4-1);
    \draw[densely dotted, ->] (m-4-1) -- (m-5-1);
    \draw[densely dotted, ->] (m-5-1) -- (m-6-1);
    \draw[densely dotted, ->] (m-1-2) -- (m-2-2);
    \draw[densely dotted, ->] (m-2-2) -- (m-3-2);
    \draw[densely dotted, ->] (m-3-2) -- (m-4-2);
    \draw[densely dotted, ->] (m-4-2) -- (m-5-2);
    \draw[densely dotted, ->] (m-1-3) -- (m-2-3);
    \draw[densely dotted, ->] (m-2-3) -- (m-3-3);
    \draw[densely dotted, ->] (m-3-3) -- (m-4-3);
    \draw[densely dotted, ->] (m-1-4) -- (m-2-4);
    \draw[densely dotted, ->] (m-2-4) -- (m-3-4);
    \draw[densely dotted, ->] (m-1-5) -- (m-2-5);
    \draw[solid,->,color=blue!60!black] (m-1-6) -- (m-2-5);
    \draw[solid,->,color=blue!60!black] (m-2-5) -- (m-3-4);
    \draw[solid,->,color=blue!60!black] (m-3-4) -- (m-4-3);
    \draw[solid,->,color=blue!60!black] (m-4-3) -- (m-5-2);
    \draw[solid,->,color=blue!60!black] (m-5-2) -- (m-6-1);

    \draw[thick, color=green!50!black] (-2.9,0.1) rectangle (-0.1,2.9);
  \end{tikzpicture}
\end{center}
Here we start with a 2-simplex in the subdivision---that is, a 5-simplex in
$N_{\sbt} \W(\C_{hG})$---which is given as the blue arrows on the
diagonal. Just as in the proof of \autoref{equivalence_three}, we factor each of
these into a move  followed by a covering sub-map and complete the squares using \autoref{lem:G-ind-bottom}. The image of the 
2-simplex represented by the diagonal maps is the $2\times 2$ diagram in the green box. We do not give the lengthy details of showing that this does indeed specify a simplicial inverse equivalence since we do not use this explicit inverse. 
\end{rmk}

\section{A trace map to group homology}\label{sec:trace}

In this section we change gears and construct a trace map to group homology from the $K$-theory of a {\catcov}. We first define this trace by a direct, explicit formula, and use it to produce new classes in the higher $K$-groups of scissors congruence. In the next section we explain how this trace factors through the equivalence of \autoref{mainthm}.

Let $G$ be a discrete group. Recall that a $G$-module $A$ is an abelian group
with a left action of $G$ through homomorphisms. Equivalently, $A$ is a left
module over the group ring $\Z[G]$. 

\begin{df}\label{volume}
  Let $\C$ be a $G$-category with covering families. A \emph{measure} on $\C$ is a choice of $G$-module $A$ and any one of the following equivalent sets of data.
  \begin{itemize}
  	\item A $G$-equivariant functor
  \[ \mu\colon \C^\circ \to \mathcal{E}\!A \]
  preserving covering families.
  	\item A $G$-equivariant functor of {\catcovs}
  \[ \mu\colon \C \to \mathcal{E}\!A_* \]
  such that $\mu^{-1}(*) = \{*\}$.
  	\item A $G$-equivariant function $\mu\colon \ob\C^\circ \to A$
  satisfying
  \begin{equation}\label{cover_volume_condition}
    \mu(P) = \sum_{i \in I} \mu(P_i)
  \end{equation}
  for every covering family $\{ P_i \to P \}_{i\in I}$ in $\C$.
  	\item A map of $\Z[G]$-modules
  \[ \mu\colon K_0(\C) \to A. \]
  \end{itemize}
\end{df}

Many natural invariants of polytopes may be viewed as measures, as illustrated by the examples below.
\begin{ex}\label{boring_volume}
	Let $\Pol{X}{1}$ be the category with covering families of polytopes and inclusions from \autoref{polytopeex}. Let $G$ be any subgroup of the isometry group so that we can view $\Pol{X}{1}$ as a $G$-category with covering families as in \autoref{no_moving}. A natural measure on $\Pol{X}{1}$ is the function that takes each polytope to its volume. This takes values in $A=\R$, with trivial $G$-action.
      \end{ex}

\begin{ex}
	Consider polygons in two-dimensional Euclidean space, $P \subseteq E^2$, and let $G = T(2) \cong \R^2$ be the group of translations of the plane. For each one-dimensional subspace $V \subseteq \R^2$, the Hadwiger invariant along $V$ takes the signed sum of the lengths of the edges of $P$ parallel to $V$. The sign is picked according to which side of the edge points to the interior of $P$. This defines a measure on $\Pol{E^2}{1}$, taking values in $\R$, with trivial action by $T(2)$.
	See \cite{dupont_82,dupont_book} for more details.
\end{ex}

\begin{ex}
	For polyhedra in three-dimensional Euclidean space $P \subseteq E^3$, the Dehn invariant is an element of $\R\otimes_\Z \R/\pi\Z$ given sending by $P$ to the sum over its edges of the length of the edge tensored with its dihedral angle, giving an element of $\R \otimes_\Z \R/\pi\Z$ (see e.g.~\cite{dupont_book,inna-perspectives}). This defines a measure with respect to the group $E(3)$ of all isometries of $E^3$, where we view $E(3)$ as acting trivially on $\R\otimes_\Z\R/\pi\Z$.
\end{ex}

The zeroth algebraic $K$-group of a category with covering families $\C$ receives the universal measure on $\C$.
\begin{ex}\label{universal_volume}
  The \emph{universal measure} sends each object of $\C$ to its $K$-theory class in $A = K_0(\C)$. It is easy to observe that any other measure on $\C$ factors through this one.
\end{ex}

In the remainder of this section, we will explain how a measure $\mu\colon \C\to \mathcal{E}\!A_*$ gives rise to an explicit trace map from the $K$-groups $K_n(\C_{hG})$ to group homology $H_n(G;A)$.  This proves the existence of the claimed trace map in \autoref{intro_trace}.

Let $N_{\sbt}^\otimes$ refer to the two-sided bar construction with respect to the tensor product. We define a map of simplicial sets 
\[ T\colon N_{\sbt} \W(\C_{hG}) \to N_{\sbt}^\otimes (\Z,\Z[G],A) \]
by sending a $p$-tuple of maps
\[\{P_{0|i}\}_{i\in I_0}  \xleftarrow{(f_{1|i},g_{1|i})} \{P_{1|i}\}_{i\in I_1}  \xleftarrow{(f_{2|i},g_{2|i})} \cdots  \xleftarrow{(f_{p|i},g_{p|i})} \{P_{p|i}\}_{i\in I_p}
 \]
to the element of $\Z[G]^{\otimes p}\otimes A$ given by the sum
\begin{equation}\label{trace_formula}
	\sum_{i\in I_p} g_{1|{f_2\cdots f_p(i)}}
	\otimes g_{2|{f_3\cdots f_p(i)}}
	\otimes \cdots
	\otimes g_{{p-1}|{f_p(i)}}
	\otimes g_{p|i}
	\otimes \mu(P_{p|i}).
\end{equation}
This map records the string of symmetries from $G$ that are applied to a piece $P_{p|i}$ in the ``finest'' cover $\{P_{p|i}\}_{i\in I_p}$ as we trace through its images under the string of covering maps, together with the value of the measure on $P_{p|i}$. We usually simplify notation by writing $g_{k|i}$ in the place of $g_{k|f_{k+1} \cdots f_p(i)}$ when $i \in I_p$, so that this formula becomes
\begin{equation*}\label{trace_formula_simplified}
	\sum_{i\in I_p} g_{1|i}
	\otimes g_{2|i}
	\otimes \cdots
	\otimes g_{{p-1}|i}
	\otimes g_{p|i}
	\otimes \mu(P_{p|i}).
\end{equation*}

\begin{lem}\label{trace_simplicial}
	The map $T\colon N_{\sbt}\W(\C_{hG})\to N^\otimes_{\sbt}(\Z,\Z[G],A)$ is simplicial.
\end{lem}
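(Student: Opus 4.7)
The plan is to check directly that $T$ commutes with every face and degeneracy map. The simplicial structure on the target $N_\sbt^\otimes(\Z,\Z[G],A)$ is standard: $d_0$ applies the augmentation $\Z[G]\to \Z$ to the first tensor factor; $d_k$ for $0<k<p$ multiplies $g_k g_{k+1}$; $d_p$ uses the $\Z[G]$-module action, taking $\cdots \otimes g_p \otimes a$ to $\cdots \otimes g_p\cdot a$; and $s_k$ inserts $1\in \Z[G]$ in the $(k{+}1)$-th slot. Accordingly, I will handle the four types of maps separately, noting that all but $d_p$ are essentially formal.

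For $d_0$, the face operation on the nerve simply drops the leftmost object and the morphism $(f_1,g_1)$; this removes the factor $g_{1|i}$ from each summand of the formula, matching augmentation. For $d_k$ with $0<k<p$, I will expand the composition $(f_k,g_k)\circ(f_{k+1},g_{k+1})$ in $\C_{hG}$ using the formula in \autoref{htpyorbitass}: the component at $i\in I_{k+1}$ has group factor $g_{k|f_{k+1}(i)}\,g_{k+1|i}$, which in the simplified indexing by $i\in I_p$ is $g_{k|i}\cdot g_{k+1|i}$. This is exactly what multiplication in $\Z[G]$ yields, so the two sides agree. The degeneracies $s_k$ insert the identity morphism $(\id,1)$, contributing a factor $1\in\Z[G]$, which matches.

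The substantive check is for $d_p$, where both hypotheses on $\mu$ (equivariance and the covering/measure property) are needed. I will compute
\[
d_p \, T(\{(f_k,g_k)\}_k)
= \sum_{i\in I_p} g_{1|i}\otimes\cdots\otimes g_{p-1|i}\otimes g_{p|i}\cdot \mu(P_{p|i}),
\]
then rewrite $g_{p|i}\cdot\mu(P_{p|i})=\mu(g_{p|i}P_{p|i})$ by $G$-equivariance. Splitting the sum over $i\in I_p$ by $j=f_p(i)\in I_{p-1}$, I will observe that the first $p{-}1$ tensor factors depend only on $j$ (since in the simplified notation $g_{k|i}=g_{k|f_{k+1}\cdots f_p(i)}$, and for $k\le p-1$ this equals $g_{k|f_{k+1}\cdots f_{p-1}(j)}$). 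Pulling those factors out of the inner sum and applying the measure condition \eqref{cover_volume_condition} to the covering family $\{f_{p|i}\colon g_{p|i}P_{p|i}\to P_{p-1|j}\}_{f_p(i)=j}$ of $\C$ yields $\mu(P_{p-1|j})$, producing exactly $T$ of the face $d_p$ of the original $p$-simplex.

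The main obstacle is bookkeeping with the indexing convention: one must be careful that the simplified notation $g_{k|i}$ with $i\in I_p$ really abbreviates $g_{k|f_{k+1}\cdots f_p(i)}$, so that after composing or dropping morphisms the corresponding relabellings match up. Once this is tracked correctly, all four cases reduce to a line or two of calculation, and the only genuinely non-formal input is the combination of $G$-equivariance with \eqref{cover_volume_condition} in the $d_p$ case. This is precisely the place where the structure of a \emph{measure} (as opposed to an arbitrary $G$-equivariant labeling of objects) is used.
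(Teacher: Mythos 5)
Your proof is correct and follows essentially the same route as the paper's: the paper declares all cases except $d_p$ "straightforward" and, for $d_p$, points to the measure condition \eqref{cover_volume_condition}; you fill in the details, correctly identifying that for $d_p$ one needs $G$-equivariance to rewrite $g_{p|i}\cdot\mu(P_{p|i})=\mu(g_{p|i}P_{p|i})$ before applying \eqref{cover_volume_condition} to the covering family in $\C$ underlying the one in $\C_{hG}$. The paper also explicitly flags the edge case where $I_p=\initial$ while $I_{p-1}\neq\initial$; your sum-splitting argument already handles that automatically, since an empty covering family of $P_{p-1|j}$ forces $\mu(P_{p-1|j})=0$ by \eqref{cover_volume_condition}.
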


\begin{proof}
The condition \eqref{cover_volume_condition} is precisely what we need for $T$ to commute with the last face map.  Note that $I_p$ could be empty and $I_{p-1}$ nonempty,  in which case \eqref{cover_volume_condition} implies that the objects  $\{P_{p-1}^i\}_{i\in I_{p-1}}$ have zero measure, so that the last face map still commutes with $T$. It is straightforward to check that $T$ commutes with the other face and degeneracy maps.
\end{proof}

We next generalize $T$ to a map of simplicial sets
\begin{equation}\label{T_X}
	T_X\colon N_{\sbt} \mathcal W(X \sma \C_{hG}) \to X \otimes N_{\sbt}^\otimes (\Z,\Z[G],A)
\end{equation}
for each based set $X$. The smash product $X \sma \C$ is defined in \autoref{wedgeex}. For abelian groups $M$, we define $X \otimes M$ similarly as a direct sum of copies of $M$ over $X$ minus its basepoint:
\[ X \otimes M := \bigoplus_{X^\circ} M. \]
This also forms a functor in the based set $X$ in an obvious way (see \autoref{ex:EM}).

We define $T_X$ by the same formula as before, except that each $P_{p|i}$ has an associated point $x \in X$, and we send it to the corresponding summand of $X \otimes (\Z[G]^{\otimes p} \otimes A)$ indexed by $x$. 
\begin{lem}
	The map $T_X$ is simplicial and natural in $X$.
\end{lem}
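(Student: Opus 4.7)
The plan is to observe that the map $T_X$ is defined label-by-label and that both the simplicial structure on the nerve and the functoriality in $X$ respect this labeling; the lemma then reduces to \autoref{trace_simplicial}.

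First, for simpliciality, I would note that in $X \sma \C_{hG}$, each non-basepoint object carries an intrinsic label in $X^\circ$ (the copy of $\C_{hG}^\circ$ in which it lies), and by the definition of morphisms and covering families in $X \sma \C$ (see \autoref{wedgeex}), every morphism and every covering family has all its source objects and its target object lying in a single copy of $\C_{hG}$. Hence for an object $\{P_{p|i}\}_{i \in I_p}$ appearing in $N_p \W(X \sma \C_{hG})$, each $i$ inherits a well-defined label $x_i \in X^\circ$, and this label is preserved by every face and degeneracy map (the morphisms in a $p$-chain only act within a single $X$-label). On the target side, the face and degeneracy maps of $X \otimes N_\bullet^\otimes(\Z,\Z[G],A) = \bigoplus_{x \in X^\circ} N_\bullet^\otimes(\Z,\Z[G],A)$ act summand-wise. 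Thus simpliciality of $T_X$ reduces to simpliciality within each fixed summand, which is precisely the content of \autoref{trace_simplicial}. The only point of substance is the last face map: in the relevant covering family $\{f_{p|i}\colon P_{p|i} \to P_{p-1|j}\}_{i \in f_p^{-1}(j)}$, all the $P_{p|i}$ carry the same $X$-label as $P_{p-1|j}$, so the additivity relation \eqref{cover_volume_condition} holds inside a single summand and matches the face map in the bar construction.

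For naturality, a based map $f\colon X \to Y$ induces a functor $X \sma \C_{hG} \to Y \sma \C_{hG}$ that relabels an object over $x \in X^\circ$ to an object over $f(x)$, sending it to the basepoint when $f(x) = *$. On the target, the corresponding map $X \otimes M \to Y \otimes M$ sends the $x$-summand identically into the $f(x)$-summand when $f(x) \neq *$ and to zero otherwise. Since $T_X$ places the summand indexed by $i$ into the $x_i$-summand of the target, both routes around the naturality square act by ``relabel $x_i$ to $f(x_i)$, and drop the summand if $f(x_i) = *$.'' This commutes on the nose.

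The only subtle point, and the one I would be most careful about, is how objects with label $x$ such that $f(x) = *$ are handled: formally they are dropped from the tuples on the source side, while on the target side the corresponding contribution becomes zero in $Y \otimes M$. These two operations are evidently compatible, so I do not expect a real obstacle; the proof is essentially a bookkeeping verification once the label-preservation observation is made.
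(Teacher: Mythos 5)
Your argument is correct and follows essentially the same approach as the paper: for simpliciality you reduce to \autoref{trace_simplicial} summand-by-summand using the fact that the $X$-label is constant along each strand of the chain, and for naturality you observe that both routes around the square relabel and/or drop strands in the same way. The paper's version of the naturality argument is phrased slightly differently (treating injections, basepoint-folds, and surjections separately and emphasizing that the trace formula sends disjoint unions of covers to sums), but the content is the same, and your direct argument covers all three cases at once; your explicit note that the additivity relation \eqref{cover_volume_condition} is what makes the last face map work is the same key point the paper isolates in \autoref{trace_simplicial}.
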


\begin{proof}
	The proof that $T_X$ is simplicial amounts to applying \autoref{trace_simplicial} for each point of $X$ separately. Naturality is clear for injective maps and maps that fold points to the basepoint. For surjective maps, the point is that the formula \eqref{trace_formula} sends disjoint unions of composable covers to sums. (Conceptually, this says that $T$ is a monoid homomorphism, and should therefore pass to group completions.)
\end{proof}

Letting $X$ vary over all finite based sets, the realizations $|T_X|$ therefore define a map of special $\Gamma$-spaces, and therefore a map of symmetric spectra. The source is $K(\C_{hG})$ by \autoref{ktheory}. The target is the iterated bar construction on the topological abelian group $|N_{\sbt}^\otimes (\Z,\Z[G],A)|$. This is a classical model for the homotopy orbits spectrum $(HA)_{hG}$. By the Dold--Kan correspondence, the homotopy groups of this spectrum are the homology of the chain complex $C_{\sbt}(G;A)$ that computes group homology.

\begin{df}\label{concrete_trace}
Suppose we have a $G$-category with covering families $\C$ and a measure $\mu\colon \C^\circ \to \mathcal{E}\!A$. The {\bf explicit trace} is the map of spectra
\[  K(\C_{hG}) \xrightarrow{\ \tr\ }  (HA)_{hG}  \]
induced by $T_{(-)}$, or equivalently the map on homotopy groups
\[  K_n(\C_{hG}) \xrightarrow{\ \tr\ }  H_n(G;A).  \]
These are equivalent because the target spectrum is a product of Eilenberg--MacLane spectra.
\end{df}
This explicit trace defines the map to group homology claimed in \autoref{intro_trace}; to complete the proof of this theorem, we relate the explicit trace to the assembly map in the next section.

The main case of interest for us is when $\C = \Pol{X}{G}$, that is, polytopes in any neat geometry $X$ up to any subgroup $G$ of the isometry group, from \autoref{polytopeex}. As in \autoref{polytopeex2}, we view $\Pol{X}{G}$ as the homotopy orbits $(\Pol{X}{1})_{hG}$. Taking $\mu$ to be the volume as in \autoref{boring_volume}, this gives a trace map
\[ K_i(\Pol{X}{G}) \longrightarrow H_i(G;\R). \]

\begin{ex}\label{volume_trace_easy}
	At $K_0$, this trace takes the class of each polytope $P$ to its volume $\mu(P)$ as an element of $H_0(G;\R) \cong \R$. This can be read off from \eqref{trace_formula}, at simplicial level 0.
\end{ex}

From \cite{innak1}, any operation that decomposes a polytope $P$ into pieces, then rearranges the pieces by elements of $G$ to form the same polytope $P$, gives a class in $K_1(\Pol{X}{G})$. We call such operations \emph{scissors automorphisms}. The image of each scissors automorphism in $H_1(G;\R) \cong G^\ab \otimes_\Z \R$ can be computed by applying \eqref{trace_formula} and then passing to homology. 

\begin{ex}\label{line_trace_1}
	Consider one-dimensional Euclidean geometry $E^1$. Let $G = T(1) \cong \R$ be the translation group, and $\mu$ the volume---i.e.~length---of the polytopes  as in \autoref{boring_volume}. Let $I_x$ be any interval of length $x$. Consider the two maps in $\Pol{E^1}{T(1)}$
	\[ \begin{tikzcd}
		I_x \amalg I_y \ar[->,bend right]{r}\ar[->,bend left]{r} & I_{x+y}
\end{tikzcd}
\]
	that glue the intervals end-to-end in opposite orders, as in \autoref{fig:interval_exchange}.
\begin{figure}[h]
	\centering
	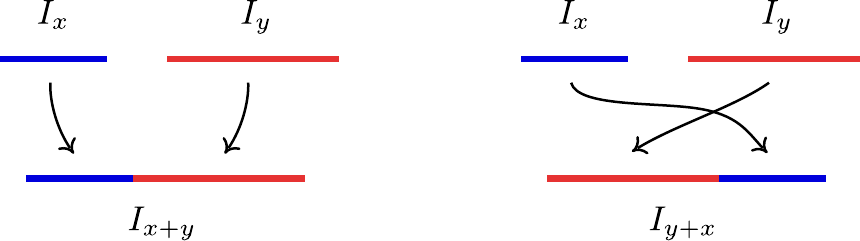
	\caption{Two ways to cover $I_{x+y}$ by the formal disjoint union $I_x \amalg I_y$.}\label{fig:interval_exchange}
\end{figure}

Without loss of generality, we may locate $I_x$ and $I_y$ inside $I_{x+y}$ so that the first of these covers is the identity map, and the second cover swaps the two intervals past each other. These form a loop in the classifying space $B\W(\Pol{E^1}{T(1)})$. Under $T$, the identity map 1-simplex goes to the chain
	\[ [0] \otimes x + [0] \otimes y \in C_1(T(1);\R) \cong \Z[T(1)] \otimes_\Z \R, \]
	and the swap 1-simplex goes to the chain 
	\[ [y] \otimes x + [-x] \otimes y. \]
	Together these form a cycle, whose image in homology is 
	\[ y \otimes x - x \otimes y  \in H_1(T(1);\R) \cong \R \otimes_\Z \R. \]	
	By \cite[Section 4]{innak1} we have the isomorphism $K_1(\Pol{E^1}{T(1)})\cong \R\wedge_\Q\R$, with $x\sma y$ represented by the loop described just above.  Therefore the explicit trace map is given by the formula
	\[ \begin{array}{rcccl}
		K_1(\Pol{E^1}{T(1)}) \cong &\R \wedge_\Q \R &\longrightarrow& \R \otimes_\Q \R &\cong H_1(T(1);\R) \\
		&x \wedge y &\longmapsto& y \otimes x - x \otimes y. &
	\end{array} \]
\end{ex}

\begin{ex}\label{rotation_example}
	Take the geometry $E^2$, and take $G = SE(2)$ to be the orientation-preserving isometries. Define a scissors automorphism of a square by cutting along the inscribed red square shown in \autoref{fig:rotated_rectangle}, where $\theta$ is an irrational multiple of $\pi$. Move the red square off to the side, and rotate it by an angle of $\theta$ to make it parallel to the sides of the original black square. Then, using only cuts and translations, rearrange this square and the remaining four right triangles back into the original rectangle. This is possible by Hadwiger's calculation of $K_0(\Pol{E^2}{T(2)})$; see \cite{dupont_82,dupont_book}.  The process is illustrated in \autoref{fig:reassemble_square}. 
	
	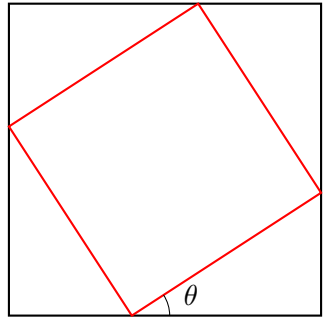
\begin{figure}[h]
	\centering
        \begin{tikzpicture}[scale=3]
          \def\th{33}
              \coordinate (A) at (0, {cos(\th)});
              \coordinate (B) at ({sin(\th)}, 0);
              \coordinate (C) at ({cos(\th)+sin(\th)},{sin(\th)});
              \coordinate (D) at ({cos(\th)},{cos(\th)+sin(\th)});

              \draw[thick] (0,0) rectangle ({cos(\th)+sin(\th)},{cos(\th)+sin(\th)});
              \draw[thick,red]  (A) -- (B) -- (C) -- (D) -- cycle;
              \draw ({sin(\th)+.2*cos(\th)},0) arc (0:\th:{.2*cos(\th)}) node[label=right:$\theta$]{};
        \end{tikzpicture}
	\caption{A square cut into a square and four right triangles.}\label{fig:rotated_rectangle}
      \end{figure}

      \begin{figure}[h]
       
\begin{tikzpicture}[scale=0.75]
    \def\th{33}

    \def\shapea#1{\fill[purple] (#1) -- +({1-cos(\th)},{-(1-cos(\th))*cot(\th)}) -- +({1-cos(\th)}, {-(1+sin(\th)-cos(\th))/2}) -- cycle;}
    \def\shapeb#1{\fill[blue!40!white] (#1) -- ({sin(\th)+cos(\th)-1},0) -- (0,0) -- cycle;}
    \def\shapec#1{\fill[yellow!80!white] (#1) -- ++({1-cos(\th)},0) -- ++(0,{-(1-cos(\th))*cot(\th)}) -- cycle;}
    \def\shaped#1{\fill[green!40!white] (#1)  -- +({cos(\th)-1},0) -- +({1-cos(\th)-sin(\th)},{(3*cos(\th)-1-sin(\th))/2}) -- +({1-cos(\th)-sin(\th)},{cot(\th)*(cos(\th)+sin(\th)-1)}) -- cycle;}
    \def\shapee#1{\fill[red!40!white] (#1) -- +({1-cos(\th)-sin(\th)},{cot(\th)*(cos(\th)+sin(\th)-1)}) -- +({1-sin(\th)-cos(\th)},{cos(\th)}) -- ++({(1-sin(\th)-cos(\th))/2}, {cos(\th)}) -- ++(0, {(1-sin(\th)-cos(\th))/2}) -- ++({0-(1-sin(\th)-cos(\th))/2},0) -- cycle;}
    \def\shapef#1{\fill[cyan] (#1) rectangle +({(1-cos(\th)-sin(\th))/2},{(1-cos(\th)-sin(\th))/2});}
    \def\shapeg#1{\fill[red!80!black] (#1) -- +({cos(\th)},0) -- +({cos(\th)},{cos(\th)+sin(\th)-1}) -- +({cot(\th)*(cos(\th)+sin(\th)-1)},{cos(\th)+sin(\th)-1}) -- cycle;}
    \def\shapeh#1{\fill[yellow!80!red] (#1) -- ++(0,{cos(\th)-1}) -- ++({cot(\th)*(cos(\th)-1)},0) -- cycle;}
    \def\shapei#1{\fill[magenta] (#1) -- +( {cot(\th)*(cos(\th)-1)}, {cos(\th)-1}) -- +({(cos(\th)-sin(\th)-1)/2}, {cos(\th)-1}) -- cycle;}
    \def\shapej#1{\fill[green!80!black] (#1) -- +({cot(\th)*(cos(\th)+sin(\th)-1)},{cos(\th)+sin(\th)-1}) -- +({(3*cos(\th)-1-sin(\th))/2}, {cos(\th)+sin(\th)-1}) -- +(0,{1-cos(\th)});}
    \def\shapek#1{\fill[blue!80!black] (#1)  -- ++( {0-cos(\th)}, 0) -- ++(0, {1-sin(\th)-cos(\th)}) -- cycle;}

    \begin{scope}[scale=5]
    \coordinate (A) at (0, {cos(\th)});
    \coordinate (B) at ({sin(\th)}, 0);
    \coordinate (C) at ({cos(\th)+sin(\th)},{sin(\th)});
    \coordinate (D) at ({cos(\th)},{cos(\th)+sin(\th)});

    \shapea{A}
    \shapeb{A}
    \shapec{D}
    \shaped{B}
    \shapee{C}
    \shapef{{cos(\th)+sin(\th)},{cos(\th)+sin(\th)}}
    \shapeg{B}
    \shapeh{C}
    \shapei{D}
    \shapej{A}
    \shapek{D}
 
    \draw[thick] (0,0) rectangle ({cos(\th)+sin(\th)},{cos(\th)+sin(\th)});
    \draw[thick]  (A) -- (B) -- (C) -- (D) -- cycle;

    \draw[|-|,yshift=-1pt] (0,0) to node[midway,below] {$\sin\theta$} ({sin(\th)},0);
    \draw[|-|,yshift=1pt] (0,{sin(\th)+cos(\th)}) to node[midway,above] {$\cos\theta$} ({cos(\th)},{sin(\th)+cos(\th)});
    \def\ct{{cos(\th)}}
    \def\st{{sin(\th)}}
    \path[shift={({-0.06*sin(\th)},{0.06*cos(\th)})}] ({sin(\th)},0) to node[midway] {$1$} ({sin(\th)+cos(\th)},{sin(\th)});
    \end{scope}

    \begin{scope}[xshift=25em, scale=5]
    \shapea{{2*cos(\th)+sin(\th)-2},{(sin(\th)-cos(\th)+1)/2}}
    \shapeb{0,{cos(\th)}}
    \shapec{{2*cos(\th)+sin(\th)-2},{(sin(\th)-cos(\th)+1)/2}}
    \shaped{{cos(\th)+sin(\th)-1}, {(1+sin(\th)-cos(\th))/2}}
    \shapee{{cos(\th)+sin(\th)-1},{(1+sin(\th)-cos(\th))/2}}
    \shapef{{(cos(\th)+sin(\th)-1)/2},{cos(\th)+sin(\th)}}
    \shapeg{{sin(\th)},1}
    \shapeh{{sin(\th)},{2-cos(\th)}}
    \shapei{{sin(\th)},{2-cos(\th)}}
    \shapej{{sin(\th)},1}
    \shapek{{(3*cos(\th)+sin(\th)-1)/2},{cos(\th)+sin(\th)}}
    
    \draw[thick] (0,0) rectangle ({cos(\th)+sin(\th)},{cos(\th)+sin(\th)});
    \draw[thick] ({cos(\th) + sin(\th)-1},0) -- ++(0,1) -- ++(1,0);
    \end{scope}

  \end{tikzpicture}
  \caption{A self-scissors congruence of a square with a $1\times 1$ subsquare
    rotated and all other pieces translated.}\label{fig:reassemble_square}
  \end{figure}
	The trace of this automorphism lies in
	\[ H_1(SE(2);\R) \cong SE(2)^\ab \otimes_\Z \R \cong \R/2\pi\Z \otimes_\Z \R. \]
	It is easy to see that its trace only depends on the first move that rotates the red rectangle---it is the only nonzero contribution to the $\R/2\pi\Z$ term. This contribution is (up to sign) $\theta \otimes x$, where $x$ is the area of the red rectangle. Since $\theta$ is an irrational multiple of $\pi$, the tensor $\theta \otimes x$ is nonzero. We have therefore proven the existence of a nonzero class in $K_1(\Pol{E^2}{SE(2)})$.
\end{ex}

In fact, by varying $\theta$ and $x$ in the previous example, we obtain the following result.
\begin{cor}
	The trace $K_1(\Pol{E^2}{SE(2)}) \to H_1(SE(2);\R)$ is surjective.
\end{cor}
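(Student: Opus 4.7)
The plan is to combine the construction of \autoref{rotation_example} with elementary tensor-product algebra. First I would identify the target: since every translation $T_v \in SE(2)$ arises as a commutator $[R_\theta, T_w] = T_{R_\theta w - w}$ (for any $\theta \not\equiv 0 \pmod{2\pi}$ the map $R_\theta - I$ on $\R^2$ is invertible), we have $[SE(2),SE(2)] = T(2)$, so $SE(2)^{\ab} \cong SO(2) \cong \R/2\pi\Z$. Using that $\R$ is $\Z$-torsion-free, the universal coefficient theorem then yields $H_1(SE(2);\R) \cong (\R/2\pi\Z) \otimes_\Z \R$.

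Next I would observe that the geometric construction of \autoref{rotation_example} remains valid for every $\theta \in (0, \pi/2)$, not just irrational multiples of $\pi$: the inscribed red square fits inside the outer square for all such $\theta$, and rescaling the whole figure by $\lambda > 0$ makes the red square have area $\lambda^2$, which ranges over all of $(0,\infty)$. As in the example, the trace of the resulting scissors automorphism receives zero contribution from the translation moves (they lie in $T(2) = [SE(2),SE(2)]$ and hence vanish in the abelianization), so it equals $\pm\, \theta \otimes x$ in $(\R/2\pi\Z) \otimes_\Z \R$. Since $K_1$ is an abelian group and the trace is a homomorphism, traversing the loop in the opposite direction produces the negative class, so the image contains $\theta \otimes x$ for all $\theta \in (0,\pi/2)$ and all $x \in \R$.

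Finally, I would check that these pure tensors generate the entire group $(\R/2\pi\Z) \otimes_\Z \R$. Given $\alpha \otimes y$ with $\alpha \in \R/2\pi\Z$ nonzero, pick a positive lift $\tilde\alpha \in \R$ (shifting by $2\pi\Z$ if needed) and an integer $n \geq 1$ large enough that $\beta := \tilde\alpha/n \in (0,\pi/2)$. Then $n\beta = \tilde\alpha$ in $\R$, so $n\beta \equiv \alpha$ in $\R/2\pi\Z$, and $\Z$-bilinearity of the tensor product gives $\alpha \otimes y = (n\beta) \otimes y = \beta \otimes (ny)$, which lies in the image by the previous paragraph. Since every element of $(\R/2\pi\Z) \otimes_\Z \R$ is a finite sum of pure tensors, surjectivity follows.

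There is no substantial obstacle in this argument, which is essentially a straightforward unpacking of the example. The only minor subtlety is that the torsion elements of $\R/2\pi\Z$ (namely the rational multiples of $2\pi$) are automatically killed upon tensoring with $\R$, so they need not be produced geometrically; the division trick sidesteps them by lifting to $\R$ before scaling.
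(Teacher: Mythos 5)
Your proof is correct and matches the paper's approach, which is only stated as ``by varying $\theta$ and $x$ in the previous example''; you have supplied the routine details that the paper omits, namely the identification $H_1(SE(2);\R)\cong(\R/2\pi\Z)\otimes_\Z\R$, the observation that the construction of \autoref{rotation_example} works for every $\theta\in(0,\pi/2)$ and every area $x>0$, and the scaling/division argument showing that the pure tensors so obtained generate the whole group.
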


\section{A second description of the trace}

In this section we explain how the trace of \autoref{concrete_trace} factors through the equivalence of \autoref{mainthm}, completing the proof of \autoref{intro_trace}.  We begin by further unraveling the {\catcov} $\mathcal{E}\!A_*$ from \autoref{ex:chaotic-group} and its $K$-theory. 
\begin{lem} \label{ex:EM}
  For any abelian group $A$, let $\mathcal{E}A_*$ be the {\catcov} from \autoref{ex:chaotic-group}. Then $K(\mathcal{E}A_*) \simeq HA$.
  \end{lem}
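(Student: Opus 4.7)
The plan is to identify the $\Gamma$-space $X \mapsto |N_\sbt \W(X \wedge \mathcal{E}A_*)|$ defining $K(\mathcal{E}A_*)$ with the Eilenberg--MacLane $\Gamma$-space $X \mapsto X \otimes A$, whose spectrification is $HA$. The comparison map will be the trace of \autoref{concrete_trace}, specialized to $G = 1$ and $\mu$ the identity; at the level of categories of covers it is the ``summation'' functor $\Sigma\colon \W(\mathcal{E}A_*) \to A$, with $A$ viewed as a discrete category, given by $\{a_i\}_{i\in I} \mapsto \sum_i a_i$.

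First I would unpack $\W(\mathcal{E}A_*)$. Its objects are finite tuples $\{a_i\}_{i\in I}$ of elements of $A$, and, since any two objects of $\mathcal{E}A$ admit a unique morphism between them, a morphism $\{a_i\}_{i\in I} \to \{b_j\}_{j\in J}$ is exactly a map of finite sets $f\colon I \to J$ satisfying $\sum_{i\in f^{-1}(j)} a_i = b_j$ for every $j \in J$; this makes $\Sigma$ well-defined on morphisms. Because $A$ is discrete, the nerve $N_\sbt \W(\mathcal{E}A_*)$ decomposes as a disjoint union $\coprod_{a\in A} N_\sbt \Sigma^{-1}(a)$ of nerves of fibers. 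Each fiber $\Sigma^{-1}(a)$ has the singleton tuple $\{a\}$ as a terminal object: any $\{a_i\}_{i\in I}$ with $\sum_i a_i = a$ admits a unique morphism to $\{a\}$, namely the constant map $I \to \{*\}$, which is a morphism in $\W$ precisely because the cover condition reduces to $\sum_i a_i = a$. In particular, when $a = 0$ this fiber also contains the empty tuple, which maps uniquely to $\{0\}$. Hence each $N_\sbt \Sigma^{-1}(a)$ is contractible, and $|N_\sbt \W(\mathcal{E}A_*)| \xrightarrow{\sim} A$.

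To promote this to an equivalence of $\Gamma$-spaces, I would combine the above with \autoref{weak_additivity} and \autoref{weak_product_comm}: for each finite based set $X$, there are natural weak equivalences
\[ |N_\sbt \W(X \wedge \mathcal{E}A_*)| \simeq \bigoplus_{X^\circ} |N_\sbt \W(\mathcal{E}A_*)| \simeq \bigoplus_{X^\circ} A = X \otimes A, \]
which are compatible with maps of based sets, since both sides are formed by wedging or summing over $X^\circ$ and $\Sigma$ is manifestly natural with respect to this decomposition. Passing to associated spectra then yields $K(\mathcal{E}A_*) \simeq HA$. I do not anticipate a serious obstacle: the only delicate point is tracking naturality in $X$ so that the pointwise equivalences assemble into an equivalence of $\Gamma$-spaces, and this reduces to the observation that $\Sigma$ commutes with the collapse functors induced by maps of based sets.
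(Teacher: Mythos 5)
Your argument is essentially the paper's proof: both identify the comparison with the summation functor on $\W(\mathcal{E}A_*)$ (sending a tuple to the sum of its entries) and both rest on the observation that the fibers of this functor are connected and have terminal objects, hence are contractible — the paper simply packages this conclusion as an application of Quillen's Theorem~A to the functor $\W(X\sma\mathcal{E}A_*)\to U(X\otimes A)$ for each finite based set $X$. The one loose thread in your write-up is the naturality in $X$: the weak-additivity equivalence of \autoref{weak_additivity} is not a priori a natural transformation of functors in $X$ (the target $\bigoplus_{X^\circ}\W(\C)$ needs the concatenation monoidal structure on $\W(\C)$ to acquire covariant functoriality in $X$), and the paper sidesteps this by writing the summation directly as a functor $\W(X\sma\mathcal{E}A_*)\to U(X\otimes A)$, which is manifestly a map of $\Gamma$-objects; your composite unwinds to exactly that functor, so the gap closes immediately once noted.
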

  
  \begin{proof}
  We construct an explicit equivalence $K(\mathcal{E}\!A_*) \simeq HA$ where $HA$ denotes the Eilenberg--MacLane spectrum on $A$.

  Recall that $HA$ may be constructed as the spectrum associated to the $\Gamma$-space
  \begin{equation}\label{EM0}
  	X \mapsto X \otimes A := \bigoplus_{x \in X^\circ} A,
  \end{equation}
  where as in \autoref{wedgeex}, $X^\circ$ refers to $X$ minus its basepoint. Maps of based sets $f\colon X \to Y$ act on $X \otimes A$ in the standard way, by deleting summands sent to the basepoint and applying $f$ to the rest.
  
  Let $UA$ be the underlying set of $A$, considered as a discrete category, with basepoint $0 \in A$.
  Consider the map of categories
  \begin{equation}\label{EM1}
  	\W( X \sma \mathcal{E}\!A_* ) \to U( X \otimes A ) := U\bigl( \bigoplus_{x \in X^\circ} A \bigr)
  \end{equation}
  defined by
  \[ \left\{(x_i,a_i)\right\}_{i\in I} \mapsto \left( \sum_{ x_i = x,\, i\in I} a_i \right)_{x \in X^\circ}. \]
  In particular, the empty tuple goes to $0 \in \bigoplus_{x \in X^\circ} A$, so that this is a map of pointed categories. It is easy to see this is well-defined on morphisms, since the morphisms in $\W( X \sma \mathcal{E}A_* )$ are covers that preserve the sums of the elements in $A$. Furthermore the construction is natural in $X$. Taking realizations, we get a map of special $\Gamma$-spaces
  \[ \left| N_{\sbt} \W\left( X \sma \mathcal{E}\!A_* \right) \right| \to X \otimes A \]
 and therefore a map on the associated spectra
  \begin{equation}\label{EM2}
  	K(\mathcal{E}\!A_*) \to HA.
  \end{equation}
  Observe that for finite $X$, the preimage of any tuple $(a_x)_{x \in X^\circ} \in U(X \otimes A)$ under the map in \eqref{EM1} is a
  connected component of $\W( X \sma \mathcal{E}\!A_* )$ which has the tuple $\{(x,a_x)\}_{x \in X^\circ}$ as a terminal object.  Thus by Quillen's Theorem A,  the map of categories \eqref{EM1} induces an equivalence on classifying spaces.  Hence the map of spectra \eqref{EM2} is an equivalence as well.
\end{proof}

\begin{rmk}
	If $A$ is a $\Z[G]$-module, it is easy to see that $\mathcal{E}\!A_*$ is a $G$-{\catcov} and that the equivalence \eqref{EM2} is $G$-equivariant. Therefore it gives an equivalence of homotopy orbit spectra.
\end{rmk}

Now consider any $G$-category with families structure $\C$, and let $\mu\colon \C^\circ \to \mathcal{E}\!A$ be a measure taking values in the $\Z[G]$-module $A$.

\begin{df}\label{abstract_trace}
The {\bf abstract trace}
  $ K(\C_{hG}) \to (HA)_{hG} $
  is defined as either composite in the following commutative diagram  in the stable homotopy category.
  \[ \begin{tikzcd}[arrows=rightarrow]
  	K(\C_{hG}) \ar[r,"\mu"] & K((\mathcal{E}A_*)_{hG})  \\
  	K(\C)_{hG} \ar[r,"\mu"]\ar[u,"\sim"] & K(\mathcal{E}A_*)_{hG}\ar[u,"\sim"'] \ar[r,"\sim"] & (HA)_{hG}.
 \end{tikzcd} \]
  The horizontal maps in the square are induced by $\mu$. The vertical maps are the equivalence from \autoref{mainthm}. The final map is induced by the equivariant equivalence from \autoref{ex:EM}.
\end{df}

\begin{thm}\label{traces_agree}
	The abstract trace of \autoref{abstract_trace} agrees in the homotopy category with the explicit trace of \autoref{concrete_trace}.
\end{thm}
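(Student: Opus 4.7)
The plan is to reduce the theorem to a single universal case by naturality, and then verify that case by an explicit simplicial computation in the uniform model $\U^\square$.

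First I would observe that the explicit trace $T$ of \autoref{concrete_trace} is natural in morphisms of $G$-{\catcovs} compatible with the measures: if $F\colon (\C, \mu_\C) \to (\D, \mu_\D)$ is a measure-preserving $G$-equivariant functor, then $T_\C = T_\D \circ K(F_{hG})$, because the only appearance of the measure in (\ref{trace_formula}) is the last tensor factor $\mu(P_{p|i})$, and the group elements $g_{k|i}$ are intrinsic to the morphisms in $\C_{hG}$ and match under $F_{hG}$. Applying this naturality to $\mu\colon \C \to \mathcal{E}\!A_*$ (with $\mathcal{E}\!A_*$ carrying the identity measure), and combining it with the naturality of $\asm$ in the {\catcov} variable from \autoref{mainthm}, the theorem reduces to the single claim
\[ T_{\mathcal{E}\!A_*} \circ \asm_{\mathcal{E}\!A_*} = EM_{hG} \]
as maps $K(\mathcal{E}\!A_*)_{hG} \to (HA)_{hG}$ in the homotopy category, where $EM\colon K(\mathcal{E}\!A_*) \to HA$ is the $G$-equivariant equivalence of \autoref{ex:EM}.

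For this reduced claim I would compare both maps at the $\Gamma$-space level on the uniform model. Using \autoref{equivalence_one} and \autoref{same_as_old_paper}, a diagonal $p$-simplex of $N_{\sbt\,\sbt}\U^\square((\mathcal{E}\!A_*)_{hG})$ smashed with a finite based set $X$ corresponds to a pair $\bigl((g_1,\ldots,g_p),\,X_0\xleftarrow{f_1}\cdots\xleftarrow{f_p}X_p\bigr)$ with $(g_1,\ldots,g_p)\in G^p$ and $X_k = \{(x_{k|i},a_{k|i})\}_{i\in I_k}$ an object of $\W(X\sma\mathcal{E}\!A_*)$. The simplicial assembly map from the last remark of \autoref{sec:main} sends this to the diagonal composite (\ref{elementinhtpyorbit}), and then (\ref{trace_formula}) yields
\[ \sum_{i\in I_p} g_1\otimes\cdots\otimes g_p\otimes(x_{p|i},a_{p|i}) \]
in $X\otimes\Z[G]^{\otimes p}\otimes A$. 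On the other hand, $EM_{hG}$ sends the same simplex to the product of the tuple $(g_1,\ldots,g_p)$ with the image of the $p$-composite under \eqref{EM1}; by the cover-sum relation (\ref{cover_volume_condition}) this $p$-composite collapses to a degenerate $p$-simplex at $\sum_{i}(x_{p|i},a_{p|i})$, yielding the identical element.

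Since the inclusion $\U^\square\hookrightarrow\W^\square$ induces a stable equivalence by \autoref{equivalence_two}, this term-by-term agreement on $\U^\square$ suffices to conclude the equality of the two stable maps. I expect the main obstacle to be the unpacking in the middle paragraph: composing the identifications from \autoref{equivalence_one}, \autoref{same_as_old_paper}, and the simplicial assembly map formula (\ref{elementinhtpyorbit}) carefully enough that the two sides become manifestly equal. Once these identifications are in place, the underlying conceptual reason---that both maps simply collect the $p$-tuple of group elements together with the sum of measure values---makes the final comparison immediate.
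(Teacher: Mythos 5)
Your proposal is correct and follows essentially the same route as the paper: reduce by naturality of the explicit trace to the case $\C=\mathcal{E}\!A_*$ with $\mu=\mathrm{id}$, then verify by direct inspection at the diagonal-simplicial level that $T_X\circ\asm$ and the homotopy orbits of the map \eqref{EM1} both record the $p$-tuple of group elements together with, for each $x\in X$, the sum of the $A$-values, using \eqref{cover_volume_condition} to reconcile the different simplicial levels. The paper packages the check as a commuting square of $\Gamma$-spaces with the classical bar construction $N_{\sbt}(S^0,G_+,N_{\sbt}\W(X\sma\mathcal{E}\!A_*))$ in the top-left corner, which by \autoref{equivalence_one} is the same thing as your $\U^\square$ model; your closing appeal to \autoref{equivalence_two} is unnecessary, since both composites already have $\U^\square$ as domain, but this is harmless.
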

This identification completes the proof of \autoref{intro_trace} of the introduction.

\begin{proof}
	The explicit trace is natural with respect to $G$-equivariant maps of {\catcovs} that commute with $\mu$. So without loss of generality $\C = \mathcal{E}A_*$ and $\mu$ is the identity.
	
	It suffices to show there is a commuting diagram of $\Gamma$-spaces of the form
	\[ \begin{tikzcd}[arrows=rightarrow]
\left|N_{\sbt}(S^0,G_+,N_{\sbt} \W(X \sma \mathcal{E}A_*))\right| \ar[r,"\sim"] \ar[d,"\sim"'] &
		\left|N_{\sbt} \W(X \sma (\mathcal{E}A_*)_{hG})\right| \ar[d,"{T_X}"]
		\\
		\left|N_{\sbt}(S^0,G_+,U(X \otimes A))\right| \ar[r,"\cong"] &
		\left|X \otimes N^\otimes_{\sbt}(\Z,\Z[G],A)\right|.
\end{tikzcd} \]
	Here the top map is the simplicial assembly map, given explicitly by the formula in \eqref{elementinhtpyorbit}. The left-hand map is the equivalence \eqref{EM1} from \autoref{ex:EM}, the bottom map is a canonical identification, and the right-hand map is the map $T_X$ from \eqref{T_X}.
	
	We check that this diagram commutes before realization, after passing to the diagonal of the bisimplicial set in the top-left. Given a $p$-tuple of group elements and composable maps of covers, both branches hold onto the $p$-tuples of group elements in $G$, together with the associated $x \in X$ and sum of elements in $A$ for each such tuple, and forget everything else.
\end{proof}

In fact, the trace map of \autoref{intro_trace} is determined by its action on $\pi_0$.  Thus, any map $K(\C)\to HA$ that applies the measure $\mu$ on $K_0(\C)$ can be used to compute the trace.  This flexibility is valuable in applications, including in \autoref{example:caryscoolpaperpolyhedra} below, where we identify the trace with the homotopy orbits of the truncation map from $K(\C)$ to its $0$th Postnikov stage $H(K_0(\C))$.

Recall that a spectrum is connective if its negative homotopy groups vanish, and that $HA$ is characterized by the fact that its homotopy is $A$ in degree zero and 0 in all other degrees. Recall further that the homotopy category of Borel $G$-spectra consists of spectra with $G$-action, up to equivariant maps that are stable equivalences after forgetting the $G$-action.
\begin{lem}\label{maps_to_em}
	If $A$ is a $\Z[G]$-module and $K$ is a connective Borel $G$-spectrum then maps in the Borel homotopy category
	\[ K \to HA \]
	are in bijection with $\Z[G]$-module homomorphisms $\pi_0 K \to A$.
\end{lem}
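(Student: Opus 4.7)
The plan is to reduce to computing mapping spaces from Eilenberg--MacLane spectra, using the equivariant Postnikov tower together with the Borel homotopy fixed points spectral sequence.

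The starting observation is that $HA$ is $0$-truncated as a Borel $G$-spectrum: all homotopy groups except $\pi_0$ vanish. Consequently, the $G$-equivariant Postnikov truncation $K \to H\pi_0 K$ (which exists since $\pi_0 K$ inherits a canonical $\Z[G]$-module structure from the $G$-action on $K$) should induce a bijection $[H\pi_0 K, HA]^{BG} \xrightarrow{\sim} [K, HA]^{BG}$. To verify this, I would consider the $G$-equivariant cofiber sequence
\[ \tau_{\geq 1} K \to K \to H\pi_0 K \]
and apply $[-, HA]^{BG}$. The resulting long exact sequence yields the desired bijection provided both $[\tau_{\geq 1} K, HA]^{BG}$ and $[\Sigma \tau_{\geq 1} K, HA]^{BG}$ vanish. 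Both vanishings follow from the Borel homotopy fixed points spectral sequence
\[ E_2^{s,t} = H^s(G; \pi_t F(X, HA)) \Longrightarrow [\Sigma^{t-s} X, HA]^{BG}, \]
by noting that $\pi_t F(X, HA) = H^{-t}(X; A)$ vanishes for $t \geq 0$ when $X$ is $1$-connective (and for $t \geq -1$ when $X$ is $2$-connective), while $H^s(G;-)$ vanishes for $s < 0$. Together these kill every entry on the $t - s = 0$ diagonal of the $E_2$-page.

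With the reduction in hand, I would compute $[H\pi_0 K, HA]^{BG}$ using the same spectral sequence, now with $X = H\pi_0 K$. Since $\pi_t F(HM, HA) = \mathrm{Ext}^{-t}_\Z(M, A)$ vanishes for $t > 0$, the $E_2$-page is supported in the quadrant $s \geq 0$, $t \leq 0$, and the only entry on the $t - s = 0$ diagonal is the origin $E_2^{0,0} = H^0(G; \Hom_\Z(\pi_0 K, A)) = \Hom_{\Z[G]}(\pi_0 K, A)$. Bidegree considerations should rule out nontrivial differentials entering or leaving this position: the outgoing differential $d_r$ targets $\mathrm{Ext}^{1-r}_\Z$, which vanishes for $r \geq 2$, while incoming differentials would originate from positions with $s < 0$. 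Tracking through the identifications then shows that the bijection $[K, HA]^{BG} \cong \Hom_{\Z[G]}(\pi_0 K, A)$ is implemented by taking $\pi_0$.

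The main obstacle I anticipate is invoking the necessary general facts about Borel $G$-spectra cleanly: the existence of equivariant Postnikov towers and convergence of the HFPSS. Both are standard features of the t-structure on $\mathrm{Fun}(BG, \Sp)$ inherited pointwise from $\Sp$, and convergence holds because the relevant mapping spectra are bounded above. I would cite these carefully rather than re-derive them.
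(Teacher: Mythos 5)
Your proof is correct, but it takes a genuinely different route from the paper. The paper's argument is an elementary cell-by-cell obstruction theory: replace $K$ by a free $G$-cellular spectrum and $HA$ by a fibrant one, define the map over the $0$- and $1$-cells from the given $\pi_0$-homomorphism, and extend over all higher cells (and likewise for homotopies between any two such maps) using only the fact that $\pi_k(HA)=0$ for $k>0$. Your argument instead factors $K \to HA$ through the equivariant Postnikov truncation $K \to H\pi_0 K$, uses the Borel homotopy fixed points spectral sequence to check the truncation induces a bijection on $[-, HA]^{BG}$, and then computes $[H\pi_0 K, HA]^{BG}$ via a second run of the HFPSS. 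Both are standard; the paper's approach is more self-contained and avoids invoking equivariant Postnikov towers or HFPSS convergence, while yours gives a cleaner structural picture and would adapt more readily to other truncated targets.

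One cosmetic inaccuracy worth fixing: the identification $\pi_t F(HM,HA) = \mathrm{Ext}^{-t}_{\Z}(M,A)$ is only valid in degrees $t = 0, -1$; for $t \leq -2$ the mapping spectrum records stable cohomology operations (Steenrod-type phenomena) that $\mathrm{Ext}_\Z$ does not see. Fortunately your argument only uses vanishing for $t > 0$, which holds simply because $\Sigma^t HM$ is $t$-connective while $HA$ sits in degree $0$. The outgoing differentials from $(0,0)$ land at $t = r-1 > 0$ and so vanish for this connectivity reason, and incoming differentials come from $s < 0$. So the conclusion stands, but the stated identification of the full $E_2$-page should be replaced by the connectivity argument.
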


\begin{proof}
	This is a standard obstruction theory argument for Eilenberg--MacLane spectra. We may replace $K$ with a free $G$-cellular spectrum and $HA$ with a fibrant spectrum.  This allows us to calculate maps in the homotopy category via maps of spectra, up to homotopy.
	
	Any homomorphism on $\pi_0$ defines $K \to HA$ on the 0- and 1-cells, and the definition extends inductively to the higher cells because $\pi_k(HA) = 0$ for $k > 0$. Furthermore, any other map $K \to HA$ inducing the same homomorphism on $\pi_0$ is homotopic on the 0-cells by a direct inspection. This homotopy can then be extended to the higher cells, again because $\pi_k(HA) = 0$ for $k > 0$. Therefore any two such maps are homotopic.
\end{proof}

\begin{cor}
	The abstract trace is identified with the homotopy $G$-orbits of any map
	\[ K(\C) \to HA \]
	that on $\pi_0$ applies the measure $\mu\colon K_0(\C) \to A$.
\end{cor}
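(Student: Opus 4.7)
The plan is to combine the definition of the abstract trace, the equivalence from \autoref{mainthm}, and the classification of maps to Eilenberg--MacLane spectra given by \autoref{maps_to_em}.

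First, I would reinterpret the abstract trace. By \autoref{abstract_trace}, it factors in the stable homotopy category as the homotopy $G$-orbits of the composite
\[ \tau_\mu\colon K(\C) \xrightarrow{\ \mu\ } K(\mathcal{E}\!A_*) \xrightarrow{\ \sim\ } HA, \]
in which the first map is induced by the measure functor $\mu\colon \C \to \mathcal{E}\!A_*$ and the second is the equivalence of \autoref{ex:EM}. Unwinding the definitions of these two maps shows that $\tau_\mu$ induces $\mu\colon K_0(\C) \to A$ on $\pi_0$.

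Second, I would apply \autoref{maps_to_em}. The spectrum $K(\C)$ is connective because it arises from a special $\Gamma$-space valued in $\Spc_*$, and the $G$-action through functors of {\catcovs} makes it into a connective Borel $G$-spectrum. Hence by \autoref{maps_to_em}, any map $\phi\colon K(\C) \to HA$ in the Borel homotopy category is determined by its effect on $\pi_0$. If $\phi$ induces $\mu$ on $\pi_0$, it follows that $\phi$ and $\tau_\mu$ agree in the Borel homotopy category.

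Finally, I would use that the homotopy $G$-orbits construction descends to a functor on the Borel homotopy category and carries Borel equivalences to stable equivalences. This yields $\phi_{hG} \simeq (\tau_\mu)_{hG}$ in the stable homotopy category, which by the first step is the abstract trace. The main obstacle is bookkeeping: one must be careful that the equivalences provided by \autoref{mainthm}, \autoref{ex:EM}, and \autoref{traces_agree} are composed consistently, so that the comparison of $\phi$ with $\tau_\mu$ genuinely takes place in a single well-defined Borel homotopy category where \autoref{maps_to_em} applies.
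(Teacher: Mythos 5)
Your proposal is correct and matches the argument implicit in the paper: the abstract trace is by Definition~\ref{abstract_trace} the homotopy $G$-orbits of a composite $K(\C) \to K(\mathcal{E}\!A_*) \xrightarrow{\sim} HA$ inducing $\mu$ on $\pi_0$, and Lemma~\ref{maps_to_em} then shows any map $K(\C) \to HA$ with the same $\pi_0$ agrees with it in the Borel homotopy category, so their homotopy orbits coincide. One small remark: Theorem~\ref{traces_agree} is not actually needed here, since the corollary concerns only the abstract trace and not the explicit one.
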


\begin{ex}\label{example:caryscoolpaperpolyhedra}
	Taking the universal measure of \autoref{universal_volume}, the trace gives a map
	\[ K_i(\C_{hG}) \to H_i(G;K_0(\C)) \]
	for any $G$-{\catcov} $\C$. By the abstract definition, this is the same as the homotopy orbits of the truncation map $K(\C) \to H(K_0(\C))$. By \cite{scissors_thom}, for $\C = \Pol{X}{1}$ this truncation map is a rational isomorphism, and therefore so is the trace.
	
	In \cite{scissors_thom}, the rational higher scissors congruence problem is reduced to a question about group homology. This example shows that the reduction is accomplished directly by the trace: taking the universal measure $A = K_0(\Pol{X}{1})$, the trace is a rational isomorphism
	\[ K_i(\Pol{X}{G}) \otimes \Q \cong H_i(G;K_0(\Pol{X}{1})) \otimes \Q. \]
\end{ex}

 \bibliographystyle{alpha}
  \bibliography{references}

\end{document}